\numberwithin{equation}{section}
			\newcommand{\dd}{\mathrm{d}}
			\newcommand{\du}{\mathrm{d}u}
			\newcommand{\dx}{\mathrm{d}x}
			\newcommand{\1}{\mathbbm{1}}
			\def\pf{{\bf Proof: }}
			\def\om{\omega}
			\def\Om{\Omega}
			\def\un{\infty}
			\def\eh{\frac{1}{2}}
			\def\zp{2 \pi}
			\def\epf{$\Box$ \\}
			\def\bx{\overline{x}}
			\def\bxi{\overline{\xi}}
			\def\beeta{\overline{\eta}}
			\newtheorem{thm}{Theorem}[section]
			\newtheorem{pr}[thm]{Proposition}
			\newtheorem{co}[thm]{Corollary}
			\newtheorem{lem}[thm]{Lemma}
			\theoremstyle{definition}
			\newcommand{\N}{\mathbb{N}}
			\newcommand{\R}{\mathbb{R}}
			\newcommand{\Z}{\mathbb{Z}}
			\newcommand{\be}{\begin{equation}}
			\newcommand{\ee}{\end{equation}}
			\newcommand{\bdm}{\begin{displaymath}}
			\newcommand{\edm}{\end{displaymath}}
			\newcommand{\bean}{\begin{eqnarray}}
			\newcommand{\eean}{\end{eqnarray}}
			\newcommand{\bea}{\begin{eqnarray*}}
				\newcommand{\eea}{\end{eqnarray*}}
			\newcommand{\cB}{\mathcal{B}}
\author[P. Imkeller]{Peter Imkeller}
\address[P. Imkeller]{Humboldt-Universit\"at zu Berlin, Institut f\"ur Mathematik, Unter den Linden 6, D-10099 Berlin, Germany}
\email{\tt imkeller@mathematik.hu-berlin.de}
\thanks{P.~Imkeller was supported in part by DFG Research Unit FOR 2402.}
\author[O. Menoukeu Pamen]{Olivier Menoukeu Pamen}
\address[O. Menoukeu Pamen]{African Institute for Mathematical Sciences, Ghana, and Institute for Financial and Actuarial Mathematics, Department of Mathematical Sciences, University of Liverpool, L69 7ZL, United Kingdom}
\email{\tt menoukeu@liv.ac.uk}
\thanks{O.~Menoukeu Pamen acknowledges the funding provided by the Alexander von Humboldt Foundation, under the programme financed by the German Federal Ministry of Education and Research entitled German Research Chair No 01DG15010.}
\author[G. dos Reis]{Gon\c{c}alo dos Reis}
\address[G. dos Reis]{School of Mathematics, University of Edinburgh, Peter Guthrie Tait Road, Edinburgh, EH9 3FD, United Kingdom, and Centro de Matem\'atica e Aplica\c c$\tilde{\text{o}}$es (CMA), FCT, UNL, Portugal
}
\email{\tt G.dosReis@ed.ac.uk}
\thanks{G.~dos Reis acknowledges support from the \emph{Funda{\c c}$\tilde{\text{a}}$o para a Ci$\hat{e}$ncia e a Tecnologia} (Portuguese Foundation for Science and Technology) through the project UIDB/00297/2020 (Centro de Matem\'atica e Aplica\c c$\tilde{\text{o}}$es CMA/FCT/UNL)}
\author[A. R\'eveillac]{Anthony R\'eveillac}
\address[A. R\'eveillac]{INSA, D\'epartement de G\'enie Math\'ematique, 135 avenue de Rangueil, 31077 Toulouse Cedex 4, France}
\email{\tt anthony.reveillac@insa-toulouse.fr}
\thanks{$^*$\textbf{Dedicated to Professor Peter Kloeden  on the occasion of  his 70$^{\textrm{th}}$ birthday}}
\title[Rough Weierstrass functions and smoothness of their SBR measure ]{Rough Weierstrass functions and dynamical systems: the smoothness of the SBR measure}
\date{
       \currenttime,
       \today
}
\keywords{Weierstrass function, lacunary series, dynamical system, attractor, Lyapunov exponent, stable manifold, SBR measure, absolute continuity.}
\subjclass[2010]{primary 26A16, 37D20; secondary 28D05, 37C70, 37D10, 37H15, 42A55.}
\begin{document}
				\selectlanguage{english}

				\begin{abstract}
We investigate Weierstrass functions with roughness parameter $\gamma$ that are H\"older continuous with coefficient $H={\log\gamma}/{\log \eh}.$ Analytical access is provided by an embedding into a dynamical system related to the baker transform where the graphs of the functions are identified as their global attractors. They possess stable manifolds hosting Sinai-Bowen-Ruelle (SBR) measures. We systematically exploit a telescoping property of associated measures to give an alternative proof of the absolute continuity of the SBR measure for large enough $\gamma$ with square integrable density. Telescoping allows a macroscopic argument using the transversality of the flow related to the mapping describing the stable manifold. The smoothness of the SBR measure can be used to compute the Hausdorff dimension of the graphs of the original Weierstrass functions and investigate their local times.
					\end{abstract}
				
				\maketitle

		\section{Introduction}
				
The interest in the subject of this paper, rough Weierstrass curves, arose from a two dimensional example of such functions studied in the context of the Fourier analytic approach of rough path analysis or rough integration theory laid out in \cite{gubinelliimkellerperkowski2015} and \cite{gubinelliimkellerperkowski2016}. In \cite{gubinelliimkellerperkowski2016}, the construction of a Stratonovich type integral of a rough function $f$ with respect to another rough function $g$ is based on the notion of paracontrol of $f$ by $g$. This Fourier analytic concept generalizes the original notion of control introduced by Gubinelli \cite{gubinelli2004}. In search of a good example of two-dimensional functions for which no component is controlled by the other one, in \cite{imkellerproemel15} we come up with a pair of Weierstrass functions $W=(W_1, W_2)$. One of them fluctuates on all dyadic scales in a sinusoidal manner, the other one in a cosinusoidal one. Hence while the first one has minimal increments, the second one has maximal ones, and vice versa. This is seen to mathematically underpin in a rigorous way the fact that they are mutually not controlled. It is also seen that the L\'evy areas of the approximating finite sums of the representing series do not converge. This geometric pathology motivated us to look for further geometric properties of the pair, or of its single components. In a companion paper \cite{IdR2018} we investigate the question: if L\'evy's area fails to exist, is $W$ space filling, at least at a nontrivial portion of its graph? And what is its Hausdorff dimension? Here we concentrate on the cosinusoidal one-dimensional component, given by
$$W(x) = \sum_{n=0}^\infty \gamma^n \cos(\zp 2^n x),\quad x\in[0,1],$$
with a roughness parameter $\gamma\in]\eh,1[$ (see Figure \ref{fig:idrRangeW-v01}). It is H\"older continuous with Hurst parameter $H = {\log \gamma}/{\log \eh}.$

\begin{figure}[htbp]
\centering
				\includegraphics[width=0.6\textwidth]{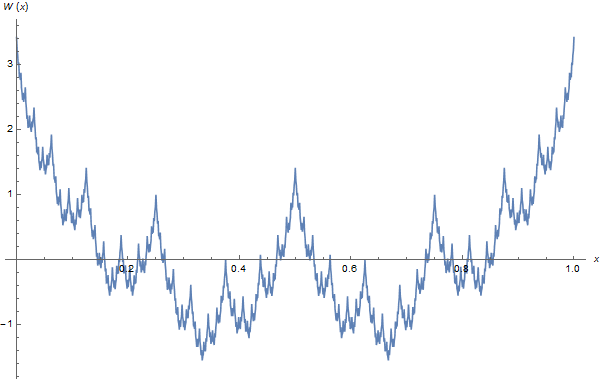}
					\caption{Graph of $W$ for $x\in[0,1]$ and $\gamma=1/\sqrt{2}$; $\{(x,W(x)):x\in[0,1]\}\subset \R^2$.}
					\label{fig:idrRangeW-v01}
\end{figure}


It had been noticed in a series of papers (see \cite{hunt1998}, \cite{baranski02}, \cite{baranski12}, \cite{baranski14published}, \cite{baranski15survey}, \cite{keller17-Publication-of-2015}, \cite{shen2018}) on one-dimensional Weierstrass type curves that the number of iterations of the expansion by a real factor can be taken as a starting point in interpreting their graphs as pullback attractors of dynamical systems in which a baker transformation defines the dynamics. This observation marks, in many of the papers quoted, the point of departure for determining the Hausdorff dimension of graphs of one dimensional Weierstrass type functions. For a historical survey of this work the reader may consult \cite{baranski14published}. For our curve we use the same metric dynamical system based on a suitable baker transformation as a starting point. This is done by introducing, besides a variable $x$ that encodes expansion by the factor $2$ forward in time, an auxiliary variable $\xi$ describing contraction by the factor $\frac{1}{2}$ in turn, forward in time as well. The operation of expansion-contraction in both variables is described by the baker transformation $B=(B_1,B_2)$ (see Lemma \ref{l:action_B} for details). Backward in time, the sense of expansion and contraction is interchanged. The action of applying forward expansion in one step just corresponds to stepping from one term in the series expansion of $W$ to the following one. This indicates that $W$ is an attractor of a three dimensional hyperbolic dynamical system $F$ that, besides contracting a leading variable by the factor $\gamma$, adds the first term of the series to the result. For details, see Lemma \ref{l:attractor}. So by definition of $F$, $W$ is its attractor. Since $\frac{1}{2}$, the factor $x$ in the forward fiber motion, is the smallest Lyapunov exponent of the linearization of $F$, there is a stable manifold related to this Lyapunov exponent. It is spanned by the vector which is given as another Weierstrass type series
$$S(\xi,x)= \zp\sum_{n=1}^\infty \kappa^n \sin\big(\zp B^n_2(\xi,x)\big),$$
where $\kappa = \frac{1}{2\gamma}\in]\eh,1[$ is a roughness parameter dual to $\gamma.$ This will be explained below. The pushforward of the Lebesgue measure by $S(\cdot, x)$ for $x\in[0,1]$ fixed, is the $x$-marginal of the Sinai-Bowen-Ruelle measure of $F$. The definition of $F$ as a linear transformation added to a very smooth function may be understood as conveying the concept of \emph{self-affinity} for the Weierstrass curve. Self-affinity can be seen as a concept providing the magnifying lens to zoom out microscopic properties of the underlying geometric object to a macroscopic scale. Our main tool of \emph{telescoping relations} translates this rough idea into mathematical formulas, quite in the sense of Keller's paper \cite{keller17-Publication-of-2015}.
The main goal of this paper is the study of smoothness of the Sinai-Bowen-Ruelle measure by means of telescoping. Its absolute continuity was seen in many papers to allow information on geometric properties of the underlying Weierstrass curves such as their Hausdorff dimensions (see for instance Keller \cite{keller17-Publication-of-2015}, or Baranski \cite{baranski15survey}). To investigate this smoothness, we will start by looking at the measure $\rho$ given by the pushforward of three-dimensional Lebesgue measure by the transformation
$$(\xi,\eta,x)\mapsto S(\xi,x)-S(\eta,x).$$
We shall derive \emph{telescoping equations} relating $\rho$ with a macroscopic version $\hat{\rho}$ living on the macroscopic set $\{\eh<|\xi-\eta|\}$ contained in $\{\bxi_0\not=\beeta_0\}$, where $\bxi_{0}$ resp.~$\beeta_0$ denote the first components in the dyadic expansion of $\xi$ resp.~$\eta.$ The key element of our approach is the observation that the behaviour of $S$ on these macroscopic sets is easy to describe, at least for specific ranges of the roughness parameters $\kappa$ resp.~$\gamma$. The related macroscopic properties are close to \emph{transversality} properties used in many of the papers cited at the beginning of this paragraph. The first appearance of this notion describing a quality of the flow related to the map $x\mapsto S(\xi,x)-S(\eta,x)$ for $(\xi,\eta)$ in the macroscopic set $\{\bxi_0\not= \beeta_0\}$ is in Tsujii \cite{tsujii01}. We shall basically refine the transversality notion to properties of the maps $x\mapsto S(\xi,x)-S(\eta,x)$ on the macroscopic set $\{\eh<|\xi-\eta|\}$. For instance, we shall show that the maps possess strict local minima below the axis, at most two strictly simple roots, for ranges of $\kappa$ given by intervals to the right of $\eh$, resp.~ranges of $\gamma$ to the left of $1$. These ranges are not optimal in our construction, since we obtain them from an approximation of the series representing $S$ by its first three terms, and a global estimate for the remainder. Better estimates for the ranges are expected for more exact approximations of $S$, but become much more involved with each additional term of the expansion. The properties have roughly one common denominator, namely that roots of the function do not coincide with roots of its derivative, a property that immediately leads to classical transversality. They are simple to formulate, but tedious to prove.

For this purpose, as opposed to other papers, we use a new formula of representation of $S(\xi,\cdot)-S(\eta,\cdot)$ and its derivatives, based on the increment function
$$g(x) = 4\pi \sum_{m=0}^\infty \kappa^m \sin \Big(\frac{\pi}{2^{m+1}}\Big) \cos\Big( \pi \frac{1+2x}{2^{m+1}}\Big),\quad x\in[0,1].$$ For $\eta=0$ it reads
$$S(\xi,x)-S(0,x)= \sum^{\infty}_{\ell=1}\kappa^{\tau_\ell+1}g\big(B_2^{\tau_\ell}(\xi,x)\big),$$
where $\tau_l$ are the times at which the dyadic sequence related to $\xi$ has an upward jump from 0 to 1. The sequence $(\tau_{\ell})_{\ell\ge 1}$ completely characterizes $\xi.$ The formula allows to study the geometric properties of $S(\xi,\cdot)-S(\eta,\cdot)$ (and its derivatives) by the geometric properties of $g$ (and its derivatives). Roughly, $S(\xi,\cdot)-S(\eta,\cdot)$ is a relatively small perturbation of $g$, varying with $\xi$, as will be illustrated by a number of graphs obtained via \emph{Mathematica} simulations. Therefore the treatment of the properties of the function $g$ is crucial for our analysis of transversality. $g$ will be seen to be almost convex, i.e.~convex above $x_0\le .027,$ with exactly one root, and one strict local minimum. How this property is inherited by $S(\xi,\cdot)-S(\eta,\cdot)$ is subject of a detailed and still complex discussion below, that leads to the conclusion that it is convex on a large interval with a complement consisting of small intervals near $0$ and $1$, and that it has a unique global minimum. Showing that this minimum is below the $x$-axis will constitute the main step in the transversality proof, and will determine the interval of $\kappa$ for which our smoothness results on the SBR measure are valid. We conjecture that the representation formula by means of upward jump times of dyadic expansions has the potential to shed more light on the complex arguments used in \cite{shen2018} extending the concept of transversality that ultimately leads to a proof of absolute continuity of the SBR measure for the entire parameter interval.

Via the telescoping identity, we will be able to explicitly describe densities of $\rho$. They relate to the functions mentioned on the respective macroscopic sets, where transversality leads to smoothness and boundedness properties of the densities. These are finally used in a Fourier analytic criterion for the smoothness of the SBR measure to deduce its absolute continuity in the main theorem (Theorem \ref{t:SBR_ac}) of the paper. This result just partially recovers a result proved in Theorem 1.2 of \cite{shen2018} by means of a different extension of Tsujii's \cite{tsujii01} tools of transversality, and complex methods different from our telescoping concept. Existence and square integrability of densities of the SBR measure are crucial for estimates on the Hausdorff dimension of Weierstrass curves in \cite{keller17-Publication-of-2015} or \cite{baranski15survey}.
We conjecture that our methods for proving transversality have the potential to lead to further fine structure geometric properties of Weierstrass curves such as their local times.

The paper is organized along these lines of reasoning in the following way. In Section \ref{s:attractor}, repeating \cite{baranski02}, \cite{hunt1998} or \cite{keller17-Publication-of-2015}, we explain the interpretation of our Weierstrass curve in terms of dynamical systems based on the baker transform. In Section \ref{s:SBR}, we describe the measures related to the SBR measure, and establish the representation of $S$ by the expansion along the upward jumps in the dyadic sequences characterizing $\xi$. In section \ref{s:transversality} we establish the transversality of $S(\xi,\cdot)-S(\eta,\cdot)$, based on a thorough analysis of the geometric properties of the increment function $g$. Section \ref{s:rho_rhohat} is devoted to establishing the telescoping relationship between $\rho$ and its macroscopic restriction. Finally, in section \ref{s:abs_continuity} we prove our main theorem on the absolute continuity of the SBR measure.




				\section{The curve as the attractor of a dynamical system}\label{s:attractor}
				
				Let $\gamma\in]\eh,1[.$ Our aim is to investigate the fine structure geometry of the one-dimensional Weierstrass curves given by
				\be
				\label{eq:DefinitionW}
				W(x) = \sum_{n=0}^\infty \gamma^n \cos (\zp 2^n x),\quad x\in[0,1].
				\ee
				
				Let us first determine the H\"older exponent of $x\mapsto W(x)$ (see \cite{baranski15survey} for an overview).
				\begin{pr}\label{p:hoelderexponent}
					$W$ is H\"older continuous with exponent $-\frac{\log \gamma}{\log 2}$.
				\end{pr}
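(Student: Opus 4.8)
The plan is to establish the upper modulus-of-continuity bound $|W(x)-W(y)|\le C\,|x-y|^{H}$ for all $x,y\in[0,1]$, where $H=-\frac{\log\gamma}{\log 2}$; note first that the hypothesis $\gamma\in]\eh,1[$ is exactly what places $H\in]0,1[$, and, as will become clear, it is also what makes the two geometric sums below behave as needed. Writing $\gamma=2^{-H}$, so that $\gamma^{n}=2^{-nH}$, I would split the defining series \eqref{eq:DefinitionW} at a dyadic frequency cutoff $N$ depending on $\delta:=|x-y|$, and control the low- and high-frequency parts by two different elementary estimates.

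For the low-frequency block $n\le N$ I would use the mean value theorem on each cosine: since $\big|\frac{\dd}{\dx}\cos(\zp 2^{n}x)\big|\le \zp 2^{n}$, the $n$-th term contributes at most $\gamma^{n}\,\zp 2^{n}\,\delta=\zp (2\gamma)^{n}\delta$, whence
$$\sum_{n=0}^{N}\big|\gamma^{n}\cos(\zp 2^{n}x)-\gamma^{n}\cos(\zp 2^{n}y)\big|\le \zp\,\delta\sum_{n=0}^{N}(2\gamma)^{n}.$$
Because $\gamma>\eh$ the ratio $2\gamma>1$, so this geometric sum is comparable to its top term $(2\gamma)^{N}$. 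For the high-frequency block $n>N$ I would simply bound each cosine difference by $2$, so that
$$\sum_{n=N+1}^{\infty}\big|\gamma^{n}\cos(\zp 2^{n}x)-\gamma^{n}\cos(\zp 2^{n}y)\big|\le 2\sum_{n=N+1}^{\infty}\gamma^{n},$$
a geometric sum with ratio $\gamma<1$, comparable to its bottom term $\gamma^{N+1}$.

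Finally I would choose $N=N(\delta)$ to balance the two blocks, namely $N=\lfloor\log_{2}(1/\delta)\rfloor$, so that $2^{-(N+1)}<\delta\le 2^{-N}$. With this choice the low-frequency block is $\lesssim \delta(2\gamma)^{N}=(\delta 2^{N})\gamma^{N}\le \gamma^{N}=2^{-NH}<(2\delta)^{H}$, using $\delta 2^{N}\le 1$ and $2^{-N}<2\delta$; the high-frequency block is $\lesssim \gamma^{N+1}\le \gamma^{N}=2^{-NH}<(2\delta)^{H}$. Both are therefore $O(\delta^{H})$, and collecting the constants yields $|W(x)-W(y)|\le C|x-y|^{H}$ with $C$ depending only on $\gamma$.

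The one point requiring care—and the only place where the hypothesis $\gamma\in]\eh,1[$ is genuinely used—is the simultaneous control of the two geometric sums: the low-frequency sum diverges termwise ($2\gamma>1$) and must be summed from the top, the high-frequency sum converges ($\gamma<1$) and is summed from the bottom, and it is precisely at the matching cutoff $2^{N}\approx 1/\delta$ that both meet at the common order $\delta^{H}$. I do not expect any genuine analytic obstacle: this is the classical Weierstrass--Hardy estimate, and the entire content lies in the bookkeeping of the cutoff $N$.
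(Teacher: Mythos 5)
Your upper-bound argument is correct and is essentially the paper's own: the same splitting of the series at the dyadic cutoff $2^{-(N+1)}<|x-y|\le 2^{-N}$, the Lipschitz bound $\zp(2\gamma)^n|x-y|$ on the low-frequency block, the trivial bound $2\gamma^n$ on the tail, and the observation that both geometric sums meet at the common order $\gamma^N\simeq |x-y|^{H}$. This proves the statement as literally worded. However, the paper's proof does not stop there: since the surrounding text announces that it will \emph{determine} the H\"older exponent, the second half of the paper's argument establishes a matching lower bound, namely that $H=-\log\gamma/\log 2$ cannot be improved. This is done by evaluating the increment at the specific points $x_n=0$, $y_n=2^{-n}$, where $W(x_n)-W(y_n)=\sum_{k\ge 1}\gamma^k\big(1-\cos(\zp 2^{k-n})\big)$ is a sum of nonnegative terms bounded below by a constant times $\gamma^{n}=|x_n-y_n|^{H}$. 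Your proposal omits this optimality part entirely. If the intended content is only ``$W$ is $H$-H\"older'', your proof is complete; if, as the paper's phrasing and its own proof indicate, the content is that $H$ is the exact H\"older exponent, you still need to supply the lower-bound computation at a sequence of point pairs such as $(0,2^{-n})$.
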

				
				\begin{proof}
					Let $x,y\in[0,1]$ and choose an integer $k\ge 0$ such that
					$$2^{-(k+1)}\leq |x-y|\leq 2^{-k}.$$
					Then we have, using the Lipschitz continuity of the $\cos$ function
					\bea
					|W(x) - W(y)| &\leq& \sum_{n=1}^k \gamma^n |\cos(\zp 2^n x) - \cos(\zp 2^n y)|
					+ 2\, \sum_{n=k+1}^\infty \gamma^n \\
					&\lesssim& \sum_{n=1}^k (2\gamma)^n |x-y| + \gamma^k \lesssim (2\gamma)^k\,\,2^{-k} + \gamma^k \simeq \gamma^k = 2^{-k \frac{\log \gamma}{\log \eh}}\\
					&\lesssim& |x-y|^{-\frac{\log \gamma}{\log 2}}.
					\eea
					This shows that $\frac{\log \gamma}{\log \eh}$ is an upper bound for the H\"older exponent of $W$. To see that it is also a lower bound, for $n\in\N$ choose $x_n = 0, y_n = 2^{-n}.$ Then we may write
					\bea
					|W(x_n) - W(y_n)| &=& \Big|\sum_{k=1}^\infty \gamma^k \big(1-\cos(\zp 2^{k-n})\big)\Big|
					\\
					&=& \sum_{k=1}^{n-1} \gamma^k \big(1-\cos(\zp 2^{k-n})\big) \gtrsim \gamma^{n} = 2^{-n \frac{\log \gamma}{\log \eh}}
					=|x_n-y_n|^{-\frac{\log \gamma}{\log 2}}.
					\eea
					Since $|x_n-y_n|\to 0$ as $n\to\infty$, this shows that $-\frac{\log \gamma}{\log 2}$ is also a lower bound for the H\"older exponent of $W$. The argument can be extended to the other points in the interval.
				\end{proof}
				

				Our access to the analysis and geometry of $W$ is via the theory of dynamical systems. In fact, we shall describe a dynamical system on $[0,1]^2$, alternatively $\Om = \{0,1\}^\N\times\{0,1\}^\N$, which produces the graph of the function as its attractor.
				For elements of $\Om$ we write for convenience $\om = ((\om_{-n})_{n\ge 0}, (\om_n)_{n\ge 1})$; one understands $\Omega$ as the space of $2$-dimensional sequences of Bernoulli random variables.  Denote by $\theta$ the canonical shift on $\Om$, given by
				$$\theta:\Om\to\Om,\quad \om\mapsto (\om_{n+1})_{n\in\Z}.$$
				$\Om$ is endowed with the product $\sigma$-algebra, and the infinite product $\iota = \otimes_{n\in\Z} (\eh \delta_{\{0\}} + \eh \delta_{\{1\}})$ of Bernoulli measures on $\{0,1\}.$ We recall that $\iota$ is $\theta$-invariant.
				
				Now let
				$$T=(T_1,T_2):\Om \to [0,1]^2,\quad \om \mapsto (\sum_{n=0}^\infty \om_{-n} 2^{-(n+1)}, \sum_{n=1}^\infty \om_n 2^{-n}).$$ Denote by $T_1$ the first component of $T$, and by $T_2$ the second one.
				It is well known that $\iota$ is mapped under the transformation $T$ to $\lambda^2$ (i.e.~$\iota=\lambda^2\circ T$), the 2-dimensional Lebesgue measure. It is also well known that the inverse of $T$, the dyadic representation of the two components from $[0,1]^2$, is uniquely defined apart from the dyadic pairs. For these we define the inverse to map to the sequences not converging to $0$. Let
				$$B =(B_1,B_2) = T\circ\theta \circ T^{-1}.$$
				We call $B=(B_1,B_2)$ the \emph{baker's transformation}. The $\theta$-invariance of $\iota$ directly translates into the $B$-invariance of $\lambda^2$:
				\begin{align}
				\label{eq:InvarianceBakerB}
					\lambda^2 \circ B^{-1} = (\lambda^2\circ T) \circ \theta^{-1}\circ T^{-1} = (\iota\circ \theta^{-1}) \circ T^{-1} = \iota \circ T^{-1} = \lambda^2.
				\end{align}
				For $(\xi, x)\in[0,1]^2$ let us denote
				$$T^{-1}(\xi, x) = \big((\bxi_{-n})_{n\ge 0}, (\bx_n)_{n\ge 1}\big).$$
				Let us calculate the action of $B$ and its integer iterates on $[0,1]^2.$
				
				\begin{lem}\label{l:action_B}
					Let $(\xi, x) \in[0,1]^2$. Then for $k\ge 0$
					$$
					B^k(\xi, x)
					=
					\Big(2^k \xi\, (\textrm{mod } 1), \frac{\bxi_{-k+1}}{2} + \frac{\bxi_{-k+2}}{2^2}+\cdots+\frac{\bxi_0}{2^k} + \frac{x}{2^k}\Big),
					$$
					for $k\ge 1$
					$$ B^{-k}(\xi, x)
					=
					\Big(\frac{\xi}{2^k}+ \frac{\bx_1}{2^k} + \frac{\bx_2}{2^{k-1}}+\cdots+ \frac{\bx_k}{2}, 2^k x (\textrm{mod } 1)\Big).
					$$
				\end{lem}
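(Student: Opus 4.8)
The plan is to unwind the definition $B=T\circ\theta\circ T^{-1}$ directly on the dyadic symbol sequences, where the shift $\theta$ acts transparently, and then read off the two real components through $T=(T_1,T_2)$. Recall that under $T^{-1}$ the pair $(\xi,x)$ corresponds to the bi-infinite sequence $\big((\bxi_{-n})_{n\ge 0},(\bx_n)_{n\ge 1}\big)$, where the non-positive indices encode $\xi$ via $T_1$ (so $\xi=\sum_{n\ge 0}\bxi_{-n}2^{-(n+1)}$) and the positive indices encode $x$ via $T_2$. The key observation is that $\theta^k$ simply relabels indices, $\om_m\mapsto\om_{m+k}$, so the combined sequence for $B^k(\xi,x)$ is $\big((\bxi_{-n+k})_{n\ge 0},(\bx_{n+k})_{n\ge 1}\big)$ once one recalls that for $k\ge 1$ the symbols $\bxi_1,\dots,\bxi_k$ are by definition the first digits $\bx_1,\dots,\bx_k$ of $x$ (the two halves of the bi-sequence share no symbols, but shifting moves the boundary). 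I would carry out the $k\ge 0$ case and the $k\ge 1$ case separately.

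For the forward case $k\ge 0$, I would compute the first component $B_1^k(\xi,x)=T_1\big(\theta^k T^{-1}(\xi,x)\big)$. Applying $T_1$ to the shifted sequence reassembles $\sum_{n\ge 0}\bxi_{-n+k}2^{-(n+1)}$; splitting off the finitely many now-positive indices and comparing with $\xi=\sum_{n\ge 0}\bxi_{-n}2^{-(n+1)}$ shows this equals $2^k\xi\ (\mathrm{mod}\ 1)$, the fractional part being exactly what survives after the integer-valued high-order digits are discarded. For the second component $B_2^k$, applying $T_2$ to the shifted sequence gives $\sum_{n\ge 1}\bxi_{-n+k+? }$; the digits that land in positions $1,\dots,k$ are precisely $\bxi_{-k+1},\dots,\bxi_0$ followed by the old $x$-digits, and summing with the stated weights $2^{-1},\dots,2^{-k}$ for the $\bxi$-terms and a factor $2^{-k}$ in front of the tail $\sum_{n\ge1}\bx_n2^{-n}=x$ yields exactly $\tfrac{\bxi_{-k+1}}2+\cdots+\tfrac{\bxi_0}{2^k}+\tfrac{x}{2^k}$. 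The backward case $k\ge 1$ is entirely symmetric: since $\theta^{-k}$ shifts in the opposite direction, the roles of the two components interchange, giving $2^k x\ (\mathrm{mod}\ 1)$ in the second slot and the analogous finite dyadic expansion built from $\bx_1,\dots,\bx_k$ prepended to $\xi$ in the first slot, which matches the displayed formula after collecting the weights $2^{-k},2^{-k},2^{-(k-1)},\dots,2^{-1}$.

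The main obstacle is purely bookkeeping rather than conceptual: one must be scrupulous about the index reindexing across the $0$/$1$ boundary of the bi-infinite sequence, and about the convention fixing the inverse $T^{-1}$ on dyadic points (choosing the expansion not converging to $0$), so that the $\mathrm{mod}\ 1$ operation and the carried digits are consistent. I would verify the claim first for $k=0$ (trivially the identity) and $k=1$ as a sanity check, then argue the general $k$ either by a direct reindexing as above or, more cleanly, by induction on $k$ using $B^{k}=B\circ B^{k-1}$ and the single-step formula; the inductive step reduces to one application of the baker map to a point whose dyadic expansion is already known, which keeps the algebra of the geometric-type sums manageable.
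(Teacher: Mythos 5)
Your proposal is correct and follows essentially the same route as the paper: both unwind $B^{\pm k}=T\circ\theta^{\pm k}\circ T^{-1}$, apply the shift directly to the bi-infinite symbol sequence, and then identify the resulting sums as the finite dyadic prefix $\frac{\bxi_{-k+1}}{2}+\cdots+\frac{\bxi_0}{2^k}$ (resp.\ $\frac{\bx_1}{2^k}+\cdots+\frac{\bx_k}{2}$) plus a $2^{-k}$-scaled tail, with the discarded integer digits accounting for the $(\textrm{mod } 1)$ operation. The paper's proof is exactly this direct reindexing computation, without the inductive alternative you mention.
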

				\pf
				By definition of $\theta^k$ for $k\ge 0$
				$$B^k(\xi,x) = \Big(\sum_{n\ge 0} \bxi_{-n+k} 2^{-(n+1)}, \frac{\bxi_{-k+1}}{2} + \frac{\bxi_{-k+2}}{2^2}+\cdots+\frac{\bxi_0}{2^k} + \sum_{n\ge 1} \bx_n 2^{-(k+n)}\Big).$$
				Now we can write
				$$\sum_{n\ge 0} \bxi_{-n+k} 2^{-(n+1)} = 2^k \xi (\mbox{mod } 1)
				\quad\textrm{and}\quad
				\sum_{n\ge 1} \bx_n 2^{-(k+n)} = \frac{x}{2^k}.$$
				This gives the first formula.
				For the second, note that by definition of $\theta^{-k}$ for $k\ge 1$
				$$B^{-k}(\xi, x) = \Big(\sum_{n\ge 0} \bxi_{-n} 2^{-(n+1+k)} + \frac{\bx_1}{2^k} + \frac{\bx_2}{2^{k-1}}+\cdots +\frac{\bx_k}{2}, \sum_{n\ge 1} \bx_{n+k} 2^{-n}\Big).$$
				Again, we identify
				$$\sum_{n\ge 1} \bx_{n+k} 2^{-n} = 2^k x (\textrm{mod } 1)
				\qquad\textrm{and}\qquad
				\sum_{n\ge 0} \bxi_{-n} 2^{-(n+1+k)} = \frac{\xi}{2^k}.$$
				\epf
				For $k\in\Z, (\xi, x)\in[0,1]^2$ we abbreviate the $k$-fold iterate of the baker transform of $(\xi,x)$ as
				$$B^{k}(\xi, x) = \big( B^{k}_1(\xi, x), B^{k}_2(\xi, x) \big) = (\xi_k, x_k),$$
				where for $k\ge 0$
				$$\xi_k = 2^k\xi (\textrm{mod } 1),
				\quad\textrm{and}\quad
				x_k = \frac{\bxi_{-k+1}}{2} + \frac{\bxi_{-k+2}}{2^2}+\cdots+\frac{\bxi_0}{2^k} + \frac{x}{2^k},$$
				and for $k\ge 1$
				$$\xi_{-k} = \frac{\xi}{2^k}+ \frac{\bx_1}{2^k} + \frac{\bx_2}{2^{k-1}}+\cdots + \frac{\bx_k}{2},
				\quad\textrm{and}\quad x_{-k} = 2^k x (\mbox{mod } 1).$$
				Following Baranski \cite{baranski02, baranski12,baranski14published}, Shen \cite{shen2018}, Hunt \cite{hunt1998}, \cite{ledrappier92}, and \cite{IdR2018}, we will next interpret the Weierstrass curve $W$ by a transformation on our base space $[0,1]^2$.
				Let
				\bea
				F: [0,1]^2\times\R &\to& [0,1]^2\times \R,\\
				(\xi,x, y)&\mapsto& \Big(B(\xi, x), \gamma y + \cos \big(\zp B_2(\xi, x)\big)\Big).
				\eea
				Here we note $B = (B_1, B_2)$ for the two components of the baker transform $B$.
				
				For convenience, we extend $W$ from $[0,1]$ to $[0,1]^2$ by setting
				$$W(\xi, x) = W(x),\quad \xi, x\in[0,1].$$
				To see that the graph of $W$ is an attractor for $F$, the skew-product structure of $F$ with respect to $B$ plays a crucial role.
				\begin{lem}\label{l:attractor}
					For any $\xi, x\in[0,1]$ we have
					$$F\big(\xi, x, W(\xi, x)\big) = \Big(B(\xi, x), W\big(B(\xi, x)\big)\Big).$$
				\end{lem}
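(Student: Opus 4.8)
The plan is to verify the claimed identity $F(\xi,x,W(\xi,x)) = (B(\xi,x), W(B(\xi,x)))$ by a direct computation that unpacks the definition of $F$ and exploits the definition of $W$ as a series together with the scaling action of the baker map on the second coordinate. By definition of $F$, the first two coordinates of $F(\xi,x,W(\xi,x))$ are simply $B(\xi,x)$, which already matches the first two coordinates of the right-hand side. So the entire content of the lemma lies in the third coordinate: I must show
\begin{align*}
\gamma\, W(\xi,x) + \cos\big(\zp B_2(\xi,x)\big) = W\big(B(\xi,x)\big).
\end{align*}
Since $W(\xi,x)=W(x)$ depends only on the second component, and $W(B(\xi,x)) = W(B_2(\xi,x))$ by the same convention, the claim reduces to the one-variable identity $\gamma W(x) + \cos(\zp B_2(\xi,x)) = W(B_2(\xi,x))$.

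The key step is to understand how the argument $x$ transforms under $B_2$. From Lemma \ref{l:action_B} with $k=1$, we have $B_2(\xi,x) = \frac{\bxi_0}{2} + \frac{x}{2}$, i.e.~applying $B_2$ divides $x$ by $2$ (and prepends a dyadic digit). The crucial observation is then the self-similar structure of $W$: writing $z = B_2(\xi,x)$, I would expand
\begin{align*}
W(z) = \sum_{n=0}^\infty \gamma^n \cos(\zp 2^n z) = \cos(\zp z) + \sum_{n=1}^\infty \gamma^n \cos(\zp 2^n z),
\end{align*}
and reindex the tail sum by $m=n-1$. Since $2^n z = 2^{n-1}(2z) = 2^{n-1}(\bxi_0 + x)$ and $\cos$ has period $2\pi$ (the integer $\bxi_0\cdot 2^{n-1}$ contributes a multiple of $2\pi$ inside the cosine once $n\ge 1$), each term $\cos(\zp 2^n z)$ collapses to $\cos(\zp 2^{m} x)$. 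This yields $\sum_{n=1}^\infty \gamma^n \cos(\zp 2^n z) = \gamma \sum_{m=0}^\infty \gamma^{m}\cos(\zp 2^m x) = \gamma W(x)$, so that $W(z) = \cos(\zp z) + \gamma W(x)$, which is exactly the desired identity with $z=B_2(\xi,x)$.

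The main obstacle, though a mild one, is the careful bookkeeping of the baker map's action and the periodicity argument: one must confirm that $B_2(\xi,x)=\frac12(\bxi_0+x)$ so that $2z = \bxi_0 + x$ with $\bxi_0\in\{0,1\}$ an integer, and then check that for every $n\ge 1$ the quantity $2^{n-1}\bxi_0$ is an integer multiple of $1$, making $\cos(\zp 2^{n-1}(\bxi_0+x)) = \cos(\zp 2^{n-1}x)$ by $2\pi$-periodicity of cosine. There is no genuine analytic difficulty here since the series for $W$ converges absolutely and uniformly (as $\gamma<1$), so the reindexing and term-by-term manipulation are fully justified; the skew-product structure does all the work, and the verification is essentially the statement that $W$ satisfies the functional equation $W(x) = \cos(\zp x) + \gamma W(2x)$ read in the contracting direction. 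I would conclude by noting that this identity, valid for all $\xi,x\in[0,1]$, is precisely the assertion that the graph of $W$ is invariant under $F$, whence it is the attractor.
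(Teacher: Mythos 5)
Your proof is correct and follows essentially the same route as the paper: both reduce the claim to the third coordinate and verify the functional equation $W(B_2(\xi,x)) = \cos\big(\zp B_2(\xi,x)\big) + \gamma W(x)$ by an index shift in the series, using the $2\pi$-periodicity of cosine to absorb the dyadic digit $\bxi_0$ (the paper packages this same periodicity step as the representation $W(\xi,x)=\sum_{n\ge 0}\gamma^n\cos\big(\zp B_2^{-n}(\xi,x)\big)$ before shifting indices). No gaps.
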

				
				\pf By the definition of the baker's transform we may write
				$$W(\xi, x) = \sum_{n=0}^\infty \gamma^n \cos \Big(\zp B^{-n}_2(\xi,x)\Big),\quad \xi, x\in[0,1].$$
				Hence, setting $k=n-1$, for $\xi, x\in[0,1]$
				\bea
				W\big(B_2(\xi,x)\big)
				&=& \sum_{n=0}^\un \gamma^n \cos \Big(\zp B^{-n+1}_2(\xi,x)\Big)\\
				&=& \cos \big(\zp B_2(\xi, x)\big) + \gamma \sum_{k=0}^\infty \gamma^k \cos \Big(\zp B^{-k}_2(\xi,x)\Big)\\
				&=& \cos \big(\zp B_2(\xi, x)\big) + \gamma W(x).
				\eea
				Hence by definition of $F$
				$$\Big(B(\xi, x), W(B(\xi, x))\Big)
				= \Big(B(\xi, x), W(B_2(\xi, x))\Big) = F\Big(\xi, x, W(\xi, x)\Big).$$
				\epf
				
				To assess the stability properties of the dynamical system generated by $F$, let us calculate its Jacobian. We obtain for $\xi, x \in[0,1], y \in\R$
				\bea
				DF(\xi, x, y) &=&
				\left[
				\begin{array}{ccc}
					2&0&0\\
					0&\eh&0\\
					0& -\pi\sin \big(\zp  B_2(\xi, x) \big)& \gamma
				\end{array}\right].
				\eea
				Hence the Lyapunov exponents of the dynamical system associated with $F$ are given by $2, \eh$, and $\gamma.$
				The corresponding invariant vector fields are given by
				$$\left(\begin{array}{c} 1\\0\\0\end{array}\right),\quad X(\xi,x)
				= \left(\begin{array}{c}0\\1\\  \zp \sum_{n=1}^\un \big(\frac{1}{2\gamma}\big)^n \sin \big(\zp B^n_2(\xi,x)\big)\end{array}\right),\quad \left(\begin{array}{c} 0\\0\\1\end{array}\right),$$
				as is straightforwardly verified. Note that $X$ is well defined, since by our choice of $\gamma$ we have $2\gamma > 1$.
				Hence we have in particular for $\xi, x\in[0,1], y \in\R$
				$$DF(\xi, x, y) X(\xi, x) = \frac{1}{2}\,\, X\big(B(\xi, x)\big).$$
				Note that the vector $X$ spans an invariant stable manifold and does not depend on $y$.

				\section{The Sinai-Bowen-Ruelle measure}\label{s:SBR}
				
				Abbreviate $\kappa = \frac{1}{2\gamma}\in ]0,1[.$ In Tsujii \cite{tsujii01} the problem of the absolute continuity of the Sinai-Bowen-Ruelle (SBR) measure on the stable manifold described by
				$$S(\xi, x) = \zp \sum_{n=1}^\un \kappa^n \sin \big( \zp B^n_2(\xi,x) \big),\quad \xi, x\in[0,1],$$
				with respect to Lebesgue measure has been treated. It has been related to the \emph{transversality} of the map $x\mapsto S(\xi,x)-S(\eta,x)$ for $\xi, \eta\in[0,1]$ such that $\bxi_0 \not= \beeta_0$. We shall now tackle a proof of this statement for a reasonably big range of $\kappa$ by giving the problem of transversality of $S$ a closer look. Our proof rests upon a comparison of the measures $\rho$, image measure of three dimensional Lebesgue measure under the map
				$$(x,\xi,\eta)\mapsto S(\xi,x)-S(\eta,x),$$
				and its restriction to the set $\{\eh<|\xi-\eta|\}$ which contains $\{\bxi_0\not= \beeta_0\}$, namely $\hat{\rho} = \rho(\cdot\cap\{\eh<|\xi-\eta|\})$.
%
%
				This comparison will simplify the derivation of smoothness of the SBR measure from transversality in the spirit of Tsujii \cite{tsujii01}.
			
				To recall the SBR measure of $F$, let us first calculate the action of $S$ on the $\lambda^2$-measure preserving map $B$. For $\xi, x\in[0,1]$  we have
				\bea
				S(B(\xi, x)) &=& \zp \sum_{n=1}^\un \kappa^n \sin \Big(\zp B^n_2\big(B_2(\xi, x)\big)\Big)\\
				&=& \zp \sum_{n=1}^\un \kappa^n \sin \big(\zp B_2^{n+1}(\xi, x)\big)\\
				&=& \zp \kappa^{-1} \sum_{k=1}^\un \kappa^k \sin \big(\zp B^k_2(\xi, x)\big) - \zp \sin \big(\zp B_2(\xi, x)\big)\\
				&=& 2 \gamma S(\xi, x) - \zp \sin\big(\zp B_2(\xi, x)\big).
				\eea
				So we may define the Anosov skew product
				\begin{align*}
				\Gamma:[0,1]^2\times \R
				& \to[0,1]^2\times \R,
				\\
				(\xi, x, v)
				&\mapsto
				\Big(B(\xi, x), 2 \gamma v - \zp \sin\big( \zp B_2(\xi, x)\big)\Big).
				\end{align*}
				Then the equation just obtained yields the following result ({compare with Lemma \ref{l:attractor}}).
				\begin{lem}\label{l:SBR}
					For $\xi, x\in[0,1]$ we have
					$$\Gamma\big(\xi, x, S(\xi, x)\big) = \big(B(\xi, x), S(B(\xi, x))\big).$$
					The push-forward measure of the Lebesgue measure in $\R^2$ to the graph of $S$ given by
					$$\psi = \lambda^2\circ (\mbox{id}, S)^{-1}$$
					on $\cB([0,1]^2)\otimes \cB(\R)$ is $\Gamma$-invariant.
				\end{lem}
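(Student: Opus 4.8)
The plan is to treat the two assertions in turn, observing that the first is essentially a restatement of the telescoping computation carried out immediately before the lemma, while the second follows from the first by a conjugacy argument combined with the $B$-invariance of $\lambda^2$ recorded in \eqref{eq:InvarianceBakerB}.

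For the first assertion I would simply substitute $v = S(\xi,x)$ into the definition of $\Gamma$. By that definition,
$$\Gamma\big(\xi, x, S(\xi, x)\big) = \Big(B(\xi, x),\, 2\gamma\, S(\xi, x) - \zp \sin\big(\zp B_2(\xi, x)\big)\Big),$$
and the identity $S\big(B(\xi, x)\big) = 2\gamma\, S(\xi, x) - \zp \sin\big(\zp B_2(\xi, x)\big)$ established just above shows that the third coordinate equals $S\big(B(\xi, x)\big)$. This gives $\Gamma(\xi, x, S(\xi, x)) = (B(\xi, x), S(B(\xi, x)))$, line by line parallel to Lemma \ref{l:attractor}.

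For the $\Gamma$-invariance of $\psi$, the key step is to read the first assertion as an intertwining relation between $\Gamma$ and $B$ through the graph embedding $(\mathrm{id}, S):(\xi,x)\mapsto(\xi,x,S(\xi,x))$, namely
$$\Gamma\circ(\mathrm{id}, S) = (\mathrm{id}, S)\circ B.$$
I would then test $\psi\circ\Gamma^{-1}$ against an arbitrary bounded measurable $\phi$ on $[0,1]^2\times\R$ and push the integral through the graph map:
\begin{align*}
\int \phi\, d(\psi\circ\Gamma^{-1})
&= \int \phi\circ\Gamma\, d\psi
= \int \phi\big(\Gamma(\xi,x,S(\xi,x))\big)\, d\lambda^2(\xi,x)\\
&= \int \phi\big(B(\xi,x), S(B(\xi,x))\big)\, d\lambda^2(\xi,x)
= \int \big(\phi\circ(\mathrm{id}, S)\big)\, d\big(\lambda^2\circ B^{-1}\big)\\
&= \int \big(\phi\circ(\mathrm{id}, S)\big)\, d\lambda^2
= \int \phi\, d\psi,
\end{align*}
where the passage to the second line uses the first assertion together with the change-of-variables rule for the pushforward under $B$, and the passage to the third line uses \eqref{eq:InvarianceBakerB}. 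This yields $\psi\circ\Gamma^{-1} = \psi$, the claimed invariance.

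Since the computation is almost mechanical, I do not expect a genuine obstacle. The only point requiring a word of care is that $T^{-1}$, and hence $B$, is ambiguous on the countable set of dyadic pairs, so that the intertwining relation and the change of variables hold only $\lambda^2$-almost everywhere. As this exceptional set is $\lambda^2$-null and every integral above is taken against $\lambda^2$ or its pushforward, the identities are unaffected, and the invariance of $\psi$ follows.
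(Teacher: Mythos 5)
Your proof is correct and follows the same route as the paper: the first identity is exactly the substitution of $v=S(\xi,x)$ into $\Gamma$ using the relation $S(B(\xi,x))=2\gamma S(\xi,x)-\zp\sin(\zp B_2(\xi,x))$ computed just before the lemma, and the invariance of $\psi$ is deduced from the $B$-invariance of $\lambda^2$ via the intertwining $\Gamma\circ(\mathrm{id},S)=(\mathrm{id},S)\circ B$, which is precisely what the paper's one-line argument relies on. You merely spell out the change-of-variables chain (and the harmless dyadic ambiguity) that the paper leaves implicit.
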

				
				\pf The first equation has been verified above. The $\Gamma$-invariance of $\psi$ is a direct consequence of the $B$-invariance of $\lambda^2.$ \epf
				
				Define $\pi_2:[0,1]^2\to[0,1], (\xi, x)\mapsto x$ and define the measure
				\begin{align}
				\label{eq:SBRforGamma}
				\mu & = \lambda^2\circ (\pi_2, S)^{-1}
				\end{align}
				on $\cB([0,1]^2)$. The measure $\mu$ is called the \emph{Sinai-Bowen-Ruelle measure} of $\Gamma$. Its marginals in $x\in[0,1]$ are denoted $\mu_x=\lambda \circ S(\cdot,x)^{-1}$.

				
				We now define a map on our probability space that exhibits certain increments of $S$ in a self similar way.
				Let
				
				$$G(\xi,x) = \zp\, \sum_{n\in\Z} \kappa^{-n} \big[\sin\big( \zp B^{-n}_2(\xi, x)\big) - \sin \big(\zp B^{-n}_2(0, x)\big)\big],\quad \xi, x\in[0,1].$$
				
				Then we have the following simple relationship between $G$ and $S$.
				
				\begin{lem}\label{l:doubly_infinite_series_G}
					For $x, \xi, \eta\in [0,1]$ we have
					$$G(\xi, x) - G(\eta, x) = S(\xi,x) - S(\eta, x).$$
				\end{lem}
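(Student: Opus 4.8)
The plan is to establish the stronger identity $G(\xi,x) = S(\xi,x) - S(0,x)$, from which the claim is immediate: subtracting the corresponding identity for $\eta$ makes the common term $S(0,x)$ cancel, leaving $G(\xi,x) - G(\eta,x) = S(\xi,x) - S(\eta,x)$. To prove this identity I would first reindex the doubly-infinite series defining $G$ via the substitution $m = -n$, so that
$$G(\xi,x) = \zp \sum_{m\in\Z} \kappa^{m}\big[\sin\big(\zp B^m_2(\xi,x)\big) - \sin\big(\zp B^m_2(0,x)\big)\big].$$
The goal then reduces to showing that only the terms with $m \ge 1$ survive, since those reproduce exactly $\zp\sum_{m\ge 1}\kappa^m[\sin(\zp B^m_2(\xi,x)) - \sin(\zp B^m_2(0,x))] = S(\xi,x) - S(0,x)$.

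The key step is the observation, read off directly from Lemma \ref{l:action_B}, that for every $n \ge 0$ the second coordinate $B^{-n}_2(\xi,x)$ is independent of $\xi$: indeed $B^0_2(\xi,x) = x$, and for $n \ge 1$ one has $B^{-n}_2(\xi,x) = 2^n x \ (\mathrm{mod}\ 1)$. Consequently, for every $m \le 0$,
$$\sin\big(\zp B^m_2(\xi,x)\big) - \sin\big(\zp B^m_2(0,x)\big) = 0,$$
so that all terms with $m \le 0$ in the reindexed series vanish identically. This is also precisely what renders $G$ well defined: in the original indexing these are the terms carrying the factor $\kappa^{-n}$ with $n \ge 0$, a factor that blows up as $n\to\infty$, and subtracting the value at $\xi = 0$ in the definition of $G$ forces them to cancel term by term; the remaining block ($m \ge 1$, equivalently $n \le -1$) carries $\kappa^m \to 0$ and converges absolutely by the uniform bound $|\sin(\cdot) - \sin(\cdot)| \le 2$.

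Putting these together, the reindexed series collapses to its $m\ge 1$ tail, which gives $G(\xi,x) = S(\xi,x) - S(0,x)$, and hence the lemma. I do not anticipate a genuine obstacle here: the only point requiring care is the legitimacy of the reindexing and of discarding the $m \le 0$ block, which is justified by first splitting the sum into the identically vanishing part and the absolutely convergent tail, and only then rearranging. The entire conceptual content is carried by the $\xi$-independence of the backward fibre coordinate established in Lemma \ref{l:action_B}.
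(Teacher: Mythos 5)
Your proposal is correct and follows essentially the same route as the paper: the entire argument rests on the observation that $B^{-n}_2(\xi,x)$ is independent of $\xi$ for $n\ge 0$ (by Lemma \ref{l:action_B}), so the corresponding block of the doubly infinite series vanishes and only the $\kappa^k$, $k\ge 1$, tail survives, reproducing $S$. The only cosmetic difference is that you first prove the stronger identity $G(\xi,x)=S(\xi,x)-S(0,x)$ and then subtract, whereas the paper subtracts $G(\eta,x)$ directly and cancels the non-negative-index terms in one step.
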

				
				\pf
				For $x, \xi, \eta\in[0,1]$ we have indeed
				\bea
				G(\xi, x) - G(\eta, x) &=& \sum_{n\in\Z} \kappa^{-n} \big[\sin \big(\zp B^{-n}_2(\xi, x)\big) - \sin \big(\zp B^{-n}_2(\eta, x)\big)\big]
				\\
				&=& \sum_{k=1}^\infty \kappa^k \big[\sin \big(\zp B^{k}_2(\xi, x)\big) - \sin \big(\zp B^{k}_2(\eta, x)\big)\big]\\
				&=& S(\xi, x) - S(\eta, x),
				\eea
				where we used that the first sum for non-negative integers $n$ is zero.
				
				This completes the proof. \epf

The following result describes the scaling properties of $G$.

\begin{lem}
\label{l:scaling_G}
For $\xi, x\in[0,1]$ we have
$$G\big(B^{-1}(\xi, x)\big) = \kappa G(\xi,x).$$
\end{lem}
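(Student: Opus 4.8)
The plan is to read the statement as a single reindexing of the doubly–infinite series defining $G$, which was introduced precisely to make such scaling transparent. Writing $(\xi',x')=B^{-1}(\xi,x)$, I would begin from
$$G(\xi',x')=\zp\sum_{n\in\Z}\kappa^{-n}\big[\sin\big(\zp B^{-n}_2(\xi',x')\big)-\sin\big(\zp B^{-n}_2(0,x')\big)\big]$$
and invoke the group property of the baker iterates from Lemma \ref{l:action_B}, namely $B^{-n}\circ B^{-1}=B^{-n-1}$, to rewrite the leading term as $B^{-n-1}_2(\xi,x)$. Substituting $m=n+1$ turns $\kappa^{-n}=\kappa^{-(m-1)}=\kappa\,\kappa^{-m}$, so that the leading part of the series becomes $\kappa$ times the corresponding part of $G(\xi,x)$. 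Provided the reference part obeys the same shift, the identity $G(B^{-1}(\xi,x))=\kappa G(\xi,x)$ drops out immediately.

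The step I expect to be the real obstacle is the bookkeeping of the reference term $\sin(\zp B^{-n}_2(0,x'))$. The clean reindexing requires $B^{-n}_2(0,x')=B^{-n-1}_2(0,x)$, i.e.\ that the reference orbit through $(0,x)$ is transported to the one through $(0,x')$ by the same shift. Since Lemma \ref{l:action_B} gives $x'=2x\,(\mathrm{mod}\,1)$ while $B^{-1}(0,x)=\big(\tfrac{\bx_1}{2},\,2x\,(\mathrm{mod}\,1)\big)$, this comes down to controlling the effect of resetting the first coordinate to $0$, equivalently the modular reduction hidden in $2x\,(\mathrm{mod}\,1)$. To make it tractable I would first pass to the genuinely convergent form of $G$: because $B^{-n}_2(\xi,x)=2^nx\,(\mathrm{mod}\,1)$ is independent of $\xi$ for $n\ge 1$, every term with $n\ge 0$ cancels in the bracket and
$$G(\xi,x)=\zp\sum_{k\ge 1}\kappa^{k}\big[\sin\big(\zp B^{k}_2(\xi,x)\big)-\sin\big(\zp B^{k}_2(0,x)\big)\big].$$
The shift $k\mapsto k-1$ then has to be carried through while matching $B^{k}_2(0,x')=x'/2^{k}$ against $B^{k-1}_2(0,x)=x/2^{k-1}$, and verifying that these align is the crux of the proof.

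A second and, to my mind, cleaner route isolates exactly this crux through $S$. From the definition one has $G(0,x)=0$, so Lemma \ref{l:doubly_infinite_series_G} at $\eta=0$ gives $G(\xi,x)=S(\xi,x)-S(0,x)$. Inverting the already-established relation $S(B(\xi,x))=2\gamma S(\xi,x)-\zp\sin(\zp B_2(\xi,x))$ and using $B_2(B^{-1}(\xi,x))=x$ yields $S(B^{-1}(\xi,x))=\kappa\big[S(\xi,x)+\zp\sin(\zp x)\big]$. Hence
$$G(B^{-1}(\xi,x))=\kappa\big[S(\xi,x)+\zp\sin(\zp x)\big]-S\big(0,2x\,(\mathrm{mod}\,1)\big),$$
and the lemma becomes equivalent to the single fiber identity $S\big(0,2x\,(\mathrm{mod}\,1)\big)=\kappa\big[S(0,x)+\zp\sin(\zp x)\big]$. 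I would establish this directly from the rapidly convergent series $S(0,x)=\zp\sum_{n\ge 1}\kappa^{n}\sin(\zp x/2^{n})$: the substitution $2x/2^{n}=x/2^{n-1}$ followed by an index shift reproduces $\kappa[S(0,x)+\zp\sin(\zp x)]$, the only delicate point again being the modular reduction in $2x\,(\mathrm{mod}\,1)$, which is where the analytic content of the statement concentrates and must be treated with care.
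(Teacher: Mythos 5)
Your second, $S$-based route is the right way to see what is going on: it reduces the lemma exactly to the single identity $S\big(0,2x\,(\mathrm{mod}\,1)\big)=\kappa\big[S(0,x)+\zp\sin(\zp x)\big]$, and you correctly flag the modular reduction as the point where all the content sits. The gap is that you leave that point unresolved, and in fact it cannot be resolved: the identity fails for every non-dyadic $x>\eh$. Since $S(0,z)=\zp\sum_{n\ge1}\kappa^n\sin(\zp z/2^n)$, your index shift turns the right-hand side into $\zp\sum_{n\ge1}\kappa^n\sin(\zp\cdot 2x/2^n)$, whereas for $x>\eh$ the left-hand side is $\zp\sum_{n\ge1}\kappa^n\sin(\zp(2x-1)/2^n)$; already at $n=1$ these give $\kappa\sin(2\pi x)$ versus $\kappa\sin(\pi(2x-1))=-\kappa\sin(2\pi x)$. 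Equivalently: $G(B^{-1}(\xi,x))=S(B^{-1}(\xi,x))-S\big(0,2x\,(\mathrm{mod}\,1)\big)$ uses the reference orbit through $\big(0,2x\,(\mathrm{mod}\,1)\big)$, while $\kappa G(\xi,x)=S(B^{-1}(\xi,x))-S(B^{-1}(0,x))$ uses the one through $B^{-1}(0,x)=\big(\bx_1/2,\,2x\,(\mathrm{mod}\,1)\big)$, and these differ as soon as the digit $\bx_1$ equals $1$ (e.g.\ $x=2/3$, $\xi=0$ gives $G(B^{-1}(0,x))=S(1/2,1/3)-S(0,1/3)\neq0=\kappa G(0,x)$). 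So the ``bookkeeping of the reference term'' in your first route is not a technicality but the obstruction itself, and neither of your two routes closes as written.

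For comparison: the paper's own proof is a one-line reindexing that writes $B^{-n}_2\big(0,2x\,(\mathrm{mod}\,1)\big)=B^{-n-1}_2(0,x)$ without comment, i.e.\ it silently performs exactly the step you hesitated over; under the stated definition of $G$ that step is false for $x>\eh$ and positive iterates of $B$, so you have in effect located an error rather than missed an idea. What is true --- and is all that is used downstream, e.g.\ in the proof of Proposition \ref{p:rho-rhohat} --- is the difference form
\begin{equation*}
G\big(B^{-1}(\xi,x)\big)-G\big(B^{-1}(\eta,x)\big)=\kappa\big[G(\xi,x)-G(\eta,x)\big],
\qquad\text{equivalently}\qquad
S\big(B^{-1}(\xi,x)\big)-S\big(B^{-1}(\eta,x)\big)=\kappa\big[S(\xi,x)-S(\eta,x)\big],
\end{equation*}
in which the problematic reference terms cancel. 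Your cocycle computation $S(B^{-1}(\xi,x))=\kappa[S(\xi,x)+\zp\sin(\zp x)]$ proves this in two lines, so the constructive fix is to record the lemma in that form (or restrict it to $x<\eh$) rather than to try to repair the identity for $G$ itself.
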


\pf
Note that by definition, defining $n+1 = k$, for $\xi, x\in[0,1]$
\bea
G\big(B^{-1}(\xi, x)\big)
&=& \zp\, \sum_{n\in\Z} \kappa^{-n} \big[\sin \big(\zp B^{-n-1}(\xi, x)\big) - \sin \big(\zp B^{-n-1}(0, x)\big) \big]\\
&=& \zp \kappa \sum_{k\in\Z} \kappa^{-k} \big[\sin \big(\zp B^{-k}(\xi, x)\big) - \sin \big(\zp B^{-k}(0, x)\big)\big] \\
&=& \kappa G(\xi, x).
\eea
This is the claimed identity.\epf

We finish this section by giving a representation of $G$ which will be the starting point for our subsequent approach of transversality of $S$.
To this end, fix $\xi \in [0,1]$. We recursively define the following sequence of upward jumps in the dyadic sequence associated with $\xi$. For $n\in \N$ let

\begin{align}
\tau_{1}=\inf\{\ell\geq 0:\bar{\xi}_{-\ell}=1\},
\text{ and }
\tau_{n+1}=\inf\{\ell>\tau_n:\bar{\xi}_{-\ell}=1\},
\end{align}

and for $x\in[0,1]$
\begin{align*}\label{defg1}
g(x):=&2\pi\sum^{\infty}_{m=0}\kappa^m\Big[
\sin \Big(2\pi B_2^{m}(0,\frac{1+x}{2})\Big)-\sin \Big(2\pi B_2^{m}(0,\frac{x}{2})\Big)\Big]\\
=&2\pi\sum^{\infty}_{m=0}\kappa^m\Big[
\sin \Big( 2\pi\frac{1+x}{2^{m+1}}\Big)-\sin \Big(2\pi\frac{x}{2^{m+1}}\Big)\Big]\\
=&4\pi\sum^{\infty}_{m=0}\kappa^{m}
	\sin \Big( \frac{\pi}{2^{m+1}}\Big)\cos \Big(\pi \frac{2x+1}{2^{m+1}}\Big).
	\end{align*}

We have the following result.

\begin{pr}\label{p:representation_S}
Let $\xi,x\in[0,1]$. Then
\be\label{e:representation_G}
G(\xi,x)=\sum^{\infty}_{\ell=1}\kappa^{\tau_\ell+1}g\big(B_2^{\tau_\ell}(\xi,x)\big).
\ee
\end{pr}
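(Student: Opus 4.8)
The plan is to pass first to the one-sided form of $G$, then to isolate the contribution of each upward jump of the dyadic expansion of $\xi$ by a telescoping argument in which the jumps are switched on one at a time, identifying each increment with a rescaled copy of $g$.

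First I would record two preliminary facts. As already observed in the proof of Lemma \ref{l:doubly_infinite_series_G}, for $n\ge 0$ the component $B_2^{-n}(\xi,x)=2^n x \,(\mathrm{mod}\,1)$ does not depend on $\xi$, so every term with $n\ge 0$ in the defining series of $G$ cancels against its $\xi=0$ counterpart, leaving
$$G(\xi,x) = 2\pi \sum_{k=1}^\infty \kappa^k\big[\sin\big(2\pi B_2^k(\xi,x)\big) - \sin\big(2\pi B_2^k(0,x)\big)\big].$$
Next, reading off Lemma \ref{l:action_B}, one has the one-step recursion
$$B_2^{k+1}(\xi,x) = \tfrac12\big(\bar{\xi}_{-k} + B_2^k(\xi,x)\big), \qquad k\ge 0,$$
so that appending a digit $1$ sends $B_2^k$ to $\tfrac{1+B_2^k}{2}$, appending a $0$ sends it to $\tfrac{B_2^k}{2}$, and appending a whole string of zeros amounts to repeated halving (consistent with $B_2^m(0,u)=u/2^m$).

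Now I would set up the telescoping. For $L\ge 0$ let $\xi^{(L)}$ be the number whose dyadic digits agree with those of $\xi$ at positions $0,\dots,\tau_L$ and vanish beyond; equivalently $\xi^{(L)}=\sum_{i=1}^{L}2^{-(\tau_i+1)}$ carries ones exactly at the first $L$ jump positions, with $\xi^{(0)}=0$ and $G(0,x)=0$. Then
$$G(\xi^{(L)},x) = \sum_{\ell=1}^L \big[G(\xi^{(\ell)},x) - G(\xi^{(\ell-1)},x)\big].$$
The heart of the proof is the evaluation of a single increment. Fixing $\ell$, the sequences $\xi^{(\ell)}$ and $\xi^{(\ell-1)}$ differ only at position $\tau_\ell$ (a $1$ versus a $0$), agree at all earlier positions, and are zero afterwards. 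Since $B_2^k$ reads only the digits at positions $0,\dots,k-1$, the two agree for all $k\le\tau_\ell$; in particular $B_2^{\tau_\ell}(\xi^{(\ell)},x)=B_2^{\tau_\ell}(\xi^{(\ell-1)},x)=B_2^{\tau_\ell}(\xi,x)=:y$. Feeding $y$ into the recursion and using that both sequences are zero beyond $\tau_\ell$ gives, for $m\ge 0$,
$$B_2^{\tau_\ell+1+m}(\xi^{(\ell)},x)=\frac{1+y}{2^{m+1}}=B_2^m\big(0,\tfrac{1+y}{2}\big), \qquad B_2^{\tau_\ell+1+m}(\xi^{(\ell-1)},x)=\frac{y}{2^{m+1}}=B_2^m\big(0,\tfrac{y}{2}\big).$$
Subtracting the one-sided series for the two values of $G$ (the $B_2^k(0,x)$ terms cancel, as do all terms with $k\le\tau_\ell$) and substituting $k=\tau_\ell+1+m$ yields
$$G(\xi^{(\ell)},x)-G(\xi^{(\ell-1)},x)=\kappa^{\tau_\ell+1}\, 2\pi\sum_{m=0}^\infty \kappa^m\Big[\sin\big(2\pi B_2^m(0,\tfrac{1+y}{2})\big)-\sin\big(2\pi B_2^m(0,\tfrac{y}{2})\big)\Big]=\kappa^{\tau_\ell+1}\, g(y),$$
which is precisely the $\ell$-th summand of \eqref{e:representation_G} with $y=B_2^{\tau_\ell}(\xi,x)$.

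It then remains to let $L\to\infty$. Because $\xi^{(L)}$ and $\xi$ share all digits up to position $\tau_L$, the one-sided series for $G(\xi^{(L)},x)$ and $G(\xi,x)$ agree termwise for $k\le\tau_L+1$, while the remaining terms are each bounded by $4\pi\kappa^k$; hence $|G(\xi^{(L)},x)-G(\xi,x)|\le C\,\kappa^{\tau_L+1}\to 0$ (with $\tau_L\to\infty$ when there are infinitely many jumps, and $\xi^{(L)}=\xi$ eventually otherwise). Absolute convergence of the right-hand side of \eqref{e:representation_G} follows from $|g(y)|\le C$ uniformly and $\tau_\ell\ge \ell-1$, so that $\sum_\ell \kappa^{\tau_\ell+1}|g|<\infty$; the telescoped partial sums therefore converge to the asserted series. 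The computation itself is routine once the recursion is in hand, so the only genuine care needed is in this limit and in the index bookkeeping—keeping digit positions, jump indices, and the shift $k=\tau_\ell+1+m$ aligned—together with the convention fixed for $T^{-1}$ at the countably many dyadic arguments; the scaling identity $G(B^{-1}(\xi,x))=\kappa\,G(\xi,x)$ of Lemma \ref{l:scaling_G} provides a convenient consistency check on the per-jump factor $\kappa^{\tau_\ell+1}$.
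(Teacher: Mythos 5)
Your proposal is correct and follows essentially the same route as the paper: truncate $\xi$ at its first $L$ upward jumps, telescope $G$ over these truncations, and identify each increment as $\kappa^{\tau_\ell+1}g\big(B_2^{\tau_\ell}(\xi,x)\big)$ by cancelling the terms with index $\le\tau_\ell$ and re-indexing via the one-step recursion for $B_2$. Your additional care with the limit $L\to\infty$ and the absolute convergence of the series is a welcome refinement of a step the paper leaves implicit, but it does not change the argument.
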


\pf
It follows from the definition of $\tau$ that $\xi$ can be written
$$
\xi=(0,\ldots,0,\underbrace{1}_{\tau_{-1}},0,\ldots,0,\underbrace{1}_{\tau_{-2}},\ldots).
$$
For $n\in \N$ let
$$
\xi^n=(\bar{\xi}_0,\ldots,\bar{\xi}_{-\tau_n},0,0\ldots).
$$
We have
$$
G(\xi,x) = S(\xi,x)-S(0,x)=\sum^{\infty}_{\ell=1}\Big(S(\xi^\ell,x)-S(\xi^{\ell-1},x)\Big),
$$
where $\xi^0=(0,0,\ldots).$

For $\ell\in \N$ let us calculate $S(\xi^\ell,\cdot)-S(\xi^{\ell-1},\cdot)$. Since $\xi^{\ell}_{-k}=\xi^{\ell-1}_{-k}$ for $k\leq \tau_\ell-1$, we have
\begin{align}
S(\xi^\ell,x)-S(\xi^{\ell-1},x)
=&2\pi\sum^{\infty}_{n=1}\kappa^n\Big[
\sin \big(2\pi B_2^n(\xi^{\ell},x)\big)-\sin \big(2\pi B_2^n(\xi^{\ell-1},x)\big)\Big]\notag
\\
=&2\pi\sum^{\infty}_{n=\tau_\ell +1}\kappa^n\Big[
\sin \big(2\pi B_2^n(\xi^{\ell},x)\big)-\sin \big(2\pi B_2^n(\xi^{\ell-1},x)\big)\Big]\notag\\
=&2\pi\kappa^{\tau_\ell}\sum^{\infty}_{m=1}\kappa^m\Big[
\sin \Big(2\pi B_2^m\big(B_2^{\tau_\ell}(\xi^{\ell},x)\big)\Big)
-\sin \Big(2\pi B_2^m\big(B_2^{\tau_\ell}(\xi^{\ell-1},x)\big)\Big)\Big].\notag
\end{align}
Now
$$
B_2^{\tau_\ell}(\xi^{\ell},x)=B_2^{\tau_\ell}(\xi,x)=B_2^{\tau_\ell}(\xi^{\ell-1},x),
$$
and
$$
B_2^1\big(B_2^{\tau_\ell}(\xi^{\ell},x)\big)
=B_2^{\tau_\ell+1}(\xi^{\ell},x)
=\frac{1+B_2^{\tau_\ell}(\xi^{\ell},x)}{2}
=\frac{1+B_2^{\tau_\ell}(\xi,x)}{2},
$$
while
$$
B_2^1\textbf{}\big(B_2^{\tau_\ell}(\xi^{\ell-1},x)\textbf{}\big)
=B_2^{\tau_\ell+1}(\xi^{\ell},x)
=\frac{B_2^{\tau_\ell}(\xi,x)}{2}.
$$
So we may write by definition of $g$
\begin{align*}
S(\xi^\ell,x)-&S(\xi^{\ell-1},x)
\\
=
&2\pi\kappa^{\tau_\ell} \sum^{\infty}_{m=1} \kappa^m
\Big[
\sin \Big(2\pi B_2^{m-1}\Big(\frac{1+B_2^{\tau_\ell}(\xi,x)}{2}\Big)\Big)
-\sin \Big( 2\pi B_2^{m-1}\Big(\frac{B_2^{\tau_\ell}(\xi,x)}{2}\Big)\Big)\Big]\notag
\\
=& \kappa^{\tau_\ell+1}g\big(B_2^{\tau_\ell}(\xi,x)\big).
\end{align*}

Hence we obtain the claimed representation
$$
G(\xi,x)=\sum^{\infty}_{\ell=1}\kappa^{\tau_\ell+1} g\big(B_2^{\tau_\ell}(\xi,x)\big), \,\,\,\xi,x\in [0,1].
$$\epf

As a consequence of the representation of Proposition \ref{p:representation_S} we obtain the following representation that will be used in our analysis of transversality.

\begin{co}\label{c:representation_S}
Let $\xi,\eta\in[0,1]$ such that $\eh<\xi-\eta.$ Let $(\tau_n)_{n\ge 1}$ resp.~$(\sigma_n)_{n\ge 1}$ be the sequences of upward jumps in the dyadic sequences representing $\xi$ resp.~$\eta.$ Then $\tau_1=0, \tau_2\ge \sigma_1,$ and for $x\in[0,1]$
\begin{align*}
S(\xi,x)-S(\eta,x)
=&G(\xi,x)-G(\eta,x)\notag
\\
=&\kappa g(x)
+\sum^{\infty}_{\ell=2}\kappa^{\tau_\ell+1} g\big(B_2^{\tau_\ell}(\xi,x)\big)
-\sum^{\infty}_{m=1}\kappa^{\sigma_m+1}g\big(B_2^{\sigma_m}(\eta,x)\big)\notag
\\
=&\kappa g(x)+\kappa^{\tau_2+1}g(B_2^{\tau_2}(\xi,x))-\kappa^{\sigma_1+1}g\big(B_2^{\sigma_1}(\eta,x)\big)
\\
&\qquad \qquad
+\sum^{\infty}_{\ell=3}\kappa^{\tau_\ell+1}g\big(B_2^{\tau_\ell}(\xi,x)\big)
-\sum^{\infty}_{m=2}\kappa^{\sigma_m+1}g\big(B_2^{\sigma_m}(\eta,x)\big).
\end{align*}
\end{co}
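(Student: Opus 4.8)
The plan is to read Corollary \ref{c:representation_S} as a regrouping of the single-point expansion of Proposition \ref{p:representation_S}, applied separately to $G(\xi,\cdot)$ and $G(\eta,\cdot)$ and combined through Lemma \ref{l:doubly_infinite_series_G}, after first recording the two elementary digit facts that organize that regrouping. The representation is the substance of the statement; the digit facts merely label which terms get peeled off.

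First I would settle the digit facts. Since $\eta\ge 0$, the hypothesis $\eh<\xi-\eta$ forces $\xi>\eh$, so the leading dyadic digit satisfies $\bar\xi_0=1$ in either admissible representation of $\xi$ (both representations of a dyadic $\xi>\eh$ keep $\bar\xi_0=1$); hence $\tau_1=0$. For the position of $\tau_2$ relative to $\sigma_1$ I would localize the two relevant magnitudes through their leading dyadic blocks: from $\xi-\eh=\sum_{\ell\ge\tau_2}\bar\xi_{-\ell}2^{-(\ell+1)}$ and $\eta=\sum_{\ell\ge\sigma_1}\bar\eta_{-\ell}2^{-(\ell+1)}$ one reads off $2^{-(\tau_2+1)}\le \xi-\eh\le 2^{-\tau_2}$ and $2^{-(\sigma_1+1)}\le \eta\le 2^{-\sigma_1}$. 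Feeding these two-sided bounds into the reformulated hypothesis $\eta<\xi-\eh$ pins down the relative position of the second jump of $\xi$ and the first jump of $\eta$ asserted in the statement, with the convention that $\eta=0$ gives $\sigma_1=+\infty$ and an empty $\eta$-block.

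Next I would assemble the representation itself. Applying Proposition \ref{p:representation_S} to both arguments and subtracting by means of Lemma \ref{l:doubly_infinite_series_G} gives $S(\xi,x)-S(\eta,x)=\sum_{\ell\ge 1}\kappa^{\tau_\ell+1}g(B_2^{\tau_\ell}(\xi,x))-\sum_{m\ge 1}\kappa^{\sigma_m+1}g(B_2^{\sigma_m}(\eta,x))$. Because $\tau_1=0$ and $B_2^0$ acts as the identity on the $x$-fibre, the $\ell=1$ term of the first series is exactly $\kappa g(x)$; splitting it off produces the second displayed line of the corollary, and peeling off in addition the $\ell=2$ term of the $\xi$-series and the $m=1$ term of the $\eta$-series produces the third and fourth lines verbatim.

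The one point requiring care, and the main (if modest) obstacle, is justifying these rearrangements and the clean extraction of individual terms. Here I would note that the jump times satisfy $\tau_\ell\ge \ell-1$ and $\sigma_m\ge m-1$, so with $\kappa\in{]0,1[}$ and $g$ bounded on $[0,1]$ both series converge absolutely; term-by-term regrouping and the removal of finitely many leading terms are therefore legitimate. The remaining bookkeeping is the dyadic-pair convention (sequences not converging to $0$), which I would treat exactly as in Proposition \ref{p:representation_S} so that $(\tau_\ell)$ and $(\sigma_m)$ are unambiguously defined and the leading-block estimates above remain valid at the dyadic boundaries, together with the degenerate case $\eta=0$, where the $\eta$-series is empty and the identity collapses to Proposition \ref{p:representation_S} applied to $\xi$ alone.
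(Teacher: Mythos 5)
Your proposal is correct and takes essentially the same route as the paper: the paper's (much terser) proof likewise records $\tau_1=0$ and the ordering of $\tau_2$ and $\sigma_1$, and then says the formula "readily follows" from Proposition \ref{p:representation_S} together with Lemma \ref{l:doubly_infinite_series_G}; your version just makes the term-peeling and absolute-convergence bookkeeping explicit. One remark: your leading-block bounds give $\tau_2\le\sigma_1$, which matches the paper's proof and the recollection "$\sigma_1\ge\tau_2$" in Section \ref{s:transversality}, so the inequality "$\tau_2\ge\sigma_1$" printed in the corollary's statement is a typo rather than a defect of your argument.
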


\pf
It is clear from $\eh<\xi-\eta$ that $\tau_1=0,$ and $\tau_2\le \sigma_1.$ Therefore, the claimed formula readily follows from Proposition \ref{p:representation_S}.\epf

Now observe that for the upward jump representations of $\xi, \eta\in[0,1]$ such that $\eh<\xi-\eta$ we may also write
\begin{align*}
B_2^{\tau_2}(\xi,x)=&\frac{x}{2^{\tau_2}}+\frac{\bar{\xi}_0}{2^{\tau_2}}+\ldots+\frac{\bar{\xi}_{-\tau_1}}{2^{\tau_2-\tau_1}}=\frac{x}{2^{\tau_2}}+\frac{1}{2^{\tau_2-\tau_1}},\\
B_2^{\sigma_1}(\xi,x)=&\frac{x}{2^{\sigma_1}},\\
B_2^{\tau_\ell}(\xi,x)=&\frac{x}{2^{\tau_\ell}}+\frac{\bar{\xi}_0}{2^{\tau_2}}+\ldots+\frac{\bar{\xi}_{-\tau_1}}{2^{\tau_\ell-\tau_1}}+\ldots+\frac{\bar{\xi}_{-\tau_2}}{2^{\tau_\ell-\tau_2}}+\ldots+\frac{\bar{\xi}_{-\tau_\ell-1}}{2^{\tau_\ell-\tau_{\ell-1}}}\\
=&\frac{x}{2^{\tau_\ell}}+\frac{1}{2^{\tau_\ell-\tau_1}}+\ldots+\frac{1}{2^{\tau_\ell-\tau_{\ell-1}}},\\
B_2^{\sigma_m}(\xi,x)=&\frac{x}{2^{\sigma_m}}+\frac{1}{2^{\sigma_m-\sigma_1}}+\ldots+\frac{1}{2^{\sigma_m-\sigma_{m-1}}}.
\end{align*}  These equations allow to translate the representation of Corollary \ref{c:representation_S} into the following formula.

\begin{co}\label{c:representation_S2}
For $\xi, \eta\in[0,1]$ such that $\eh<\xi-\eta$ we have
\begin{align}
S(\xi,x)-S(\eta,x)
=&\kappa g(x)+\kappa^{\tau_2+1}g\Big(\frac{x}{2^{\tau_2}}+\frac{1}{2^{\tau_2-\tau_1}}\Big)-\kappa^{\sigma_1+1}g\Big(\frac{x}{2^{\sigma_1}}\Big)\notag\\
&\quad +\sum^{\infty}_{\ell=3}\kappa^{\tau_\ell+1}g\Big(\frac{x}{2^{\tau_\ell}}+\frac{1}{2^{\tau_\ell-\tau_1}}+\ldots+\frac{1}{2^{\tau_\ell-\tau_{\ell-1}}}\Big)\notag\\
&\qquad -\sum^{\infty}_{m=2}\kappa^{\sigma_m+1}g\Big(\frac{x}{2^{\sigma_m}}+\frac{1}{2^{\sigma_m-\sigma_1}}+\ldots+\frac{1}{2^{\sigma_m-\sigma_{m-1}}}\Big).
\end{align}
\end{co}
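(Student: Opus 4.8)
The plan is to prove the statement by a direct termwise substitution: everything reduces to replacing each argument $B_2^{\tau_\ell}(\xi,x)$ and $B_2^{\sigma_m}(\eta,x)$ occurring in Corollary \ref{c:representation_S} by its explicit dyadic expression, which is read off from Lemma \ref{l:action_B}. That lemma gives, for $k\ge 0$, the second baker coordinate $B_2^k(\xi,x)=\frac{\bxi_{-k+1}}{2}+\cdots+\frac{\bxi_0}{2^k}+\frac{x}{2^k}$, so the only nontrivial point is to determine which of the digits $\bxi_{-j}$ appearing in the finite sum are equal to $1$. No analytic estimate is needed: the two series in $\ell$ and $m$ converge exactly as in Proposition \ref{p:representation_S}.

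The key observation is purely combinatorial. By the definition of the jump times, $\bxi_{-j}=1$ holds precisely when $j\in\{\tau_1,\tau_2,\dots\}$. Taking $k=\tau_\ell$, the finite sum in $B_2^{\tau_\ell}(\xi,x)$ ranges over the digit positions $j\in\{0,1,\dots,\tau_\ell-1\}$, and the digit $\bxi_{-j}$ carries the weight $2^{-(\tau_\ell-j)}$. Hence only the positions $j=\tau_1,\dots,\tau_{\ell-1}$, the jump times strictly below $\tau_\ell$, contribute, each with weight $2^{-(\tau_\ell-\tau_j)}$, which yields $B_2^{\tau_\ell}(\xi,x)=\frac{x}{2^{\tau_\ell}}+\frac{1}{2^{\tau_\ell-\tau_1}}+\cdots+\frac{1}{2^{\tau_\ell-\tau_{\ell-1}}}$. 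The same reasoning applied to $\eta$ gives the stated expression for $B_2^{\sigma_m}(\eta,x)$. In particular, since the hypothesis $\eh<\xi-\eta$ forces $\bxi_0=1$ and $\beeta_0=0$, we get $\tau_1=0$ and no jump of $\eta$ at position $0$, so that $B_2^{\sigma_1}(\eta,x)=\frac{x}{2^{\sigma_1}}$, matching the isolated term $-\kappa^{\sigma_1+1}g(x/2^{\sigma_1})$ of Corollary \ref{c:representation_S}.

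With these identities established, I would substitute them termwise into the expansion of Corollary \ref{c:representation_S}, peeling off the $\ell=1$ term $\kappa g(x)$ (using $\tau_1=0$, whence $B_2^{\tau_1}(\xi,x)=x$) together with the $\ell=2$ and $m=1$ terms, to reach the claimed formula immediately. I expect the only place demanding care to be the index bookkeeping of the previous paragraph: one must check that the jump time $\tau_\ell$ itself is excluded from the finite sum (its digit $\bxi_{-\tau_\ell}$ sits at position $\tau_\ell$, which lies outside the summation range $\{0,\dots,\tau_\ell-1\}$), that the leading summand carries weight $2^{-1}$ while $\bxi_0$ carries weight $2^{-\tau_\ell}$, and that the shift identity $B_2^{\tau_\ell}=B_2^{\tau_\ell}\circ(\text{restriction of }\xi)$ used implicitly in Proposition \ref{p:representation_S} is compatible with evaluating at the full $\xi$. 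Beyond this elementary translation from dyadic digits to jump times, the corollary is a mechanical rewriting of Corollary \ref{c:representation_S}.
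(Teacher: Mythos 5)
Your proposal is correct and follows essentially the same route as the paper: the paper proves the corollary by writing out $B_2^{\tau_\ell}(\xi,x)$ and $B_2^{\sigma_m}(\eta,x)$ explicitly via Lemma \ref{l:action_B}, observing that only the digits at the jump times $\tau_1,\dots,\tau_{\ell-1}$ (resp.\ $\sigma_1,\dots,\sigma_{m-1}$) equal $1$, and substituting termwise into Corollary \ref{c:representation_S}. Your index bookkeeping (position $j$ carrying weight $2^{-(\tau_\ell-j)}$, exclusion of the position $\tau_\ell$ itself, and $B_2^{\sigma_1}(\eta,x)=x/2^{\sigma_1}$ since $\sigma_1$ is the first jump of $\eta$) matches the displayed identities the paper records just before the corollary.
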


				\section{Transversality of $S$}\label{s:transversality}
In this section we shall establish a property of $S$ which will turn out to be crucial for the smoothness of the SBR measure deduced subsequently. It is called \emph{transversality}. We say that $S$ is \emph{transversal} if the vector
$$V(x) = \big(\,S(\xi,x)-S(\eta,x), S'(\xi,x)-S'(\eta,x)\,\big)$$
cannot be zero, for any $x\in[0,1],$ on the set $\{\eh<|\xi-\eta|\}$. This notion paraphrases a property of the flow described by $S(\xi,\cdot)-S(\eta,\cdot)$, and states that local extrema of $S(\xi,\cdot)-S(\eta,\cdot)$ cannot lie on the $x$-axis, i.e.~that there is motion transversal to the flow lines. We will design an interval $I$ in $]\eh,1[$ such that for $\kappa\in I$ the map $S$ is transversal. To deduce the property, we shall employ the representation obtained in Corollaries \ref{c:representation_S} and \ref{c:representation_S2} which mainly depends on the base function $g$. So we first discuss the geometric properties of $g$ in the following subsection.

\subsection{Geometric properties of $g$}\label{ss:geometry_g}

Recall that for $x\in[0,1]$
$$g(x) = 4\pi\sum^{\infty}_{m=0}\kappa^{m}
	\sin \Big( \frac{\pi}{2^{m+1}}\Big)\cos\Big( \pi \frac{2x+1}{2^{m+1}}\Big).$$
So its first and second derivatives $g'$ and $g''$ are given by
\begin{align}
g'(x)=&-4\pi^2\sum^{\infty}_{m=0}\Big(\frac{\kappa}{2}\Big)^{m}
\sin \Big( \frac{\pi}{2^{m+1}}\Big)\sin \Big(\pi \frac{2x+1}{2^{m+1}}\Big),\label{defgpri1}
\\
g''(x)=&-4\pi^3\sum^{\infty}_{m=0}\Big(\frac{\kappa}{4}\Big)^{m}
\sin \Big(\frac{\pi}{2^{m+1}}\Big)\cos \Big(\pi \frac{2x+1}{2^{m+1}}\Big). \label{defgdpri1}
\end{align}

By exploiting the properties of $g, g',$ and $g''$ we shall show that $g$ is a convex function up to a small interval $[0,x_0]$ near zero, with $x_0\le .027$ for any $\kappa\le \kappa_0$, where $\kappa_0\in[.55,.56]$, on which it is strictly decreasing. To show this we shall first do an analysis of $g''$ and show that it is positive except on $[0,.027]$. In our arguments, the functions are mostly approximated by the first two terms of their series expansions, with appropriate error estimates.

\begin{lem}\label{l:positivity_gdoubleprime}
There exists $0<x_0<.027$ such that for any $\kappa\le .6$ we have $g''|_{[x_0,1]}>0.$
\end{lem}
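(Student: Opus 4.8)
The plan is to establish positivity of $g''$ on $[x_0,1]$ by isolating the first term of the series \eqref{defgdpri1} and controlling the tail uniformly for $\kappa\le .6$. Writing
\[
g''(x)=-4\pi^3\sum_{m=0}^\infty\Big(\frac{\kappa}{4}\Big)^m\sin\Big(\frac{\pi}{2^{m+1}}\Big)\cos\Big(\pi\frac{2x+1}{2^{m+1}}\Big),
\]
I first record that the leading term ($m=0$) equals $-4\pi^3\sin(\pi/2)\cos\big(\pi(2x+1)/2\big)=-4\pi^3\cos\big(\pi x+\tfrac{\pi}{2}\big)=4\pi^3\sin(\pi x)$, which is strictly positive on $(0,1)$ and vanishes only at the endpoints. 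The strategy is therefore to show that this strictly positive leading term dominates the remaining tail $\sum_{m\ge1}$ on the interval $[x_0,1]$, where $x_0$ is chosen to handle the degeneracy of $\sin(\pi x)$ near $0$.

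First I would isolate the $m=1$ term as well, since the near-$0$ behaviour of $g''$ is delicate and the crude geometric tail bound alone will not give a threshold as small as $.027$. The $m=1$ term is $-4\pi^3(\kappa/4)\sin(\pi/4)\cos\big(\pi(2x+1)/4\big)$; combining it with the leading term gives a two-term approximation
\[
g''(x)\approx 4\pi^3\Big[\sin(\pi x)-\frac{\kappa}{4}\cdot\frac{\sqrt2}{2}\cos\Big(\pi\frac{2x+1}{4}\Big)\Big],
\]
and I would estimate the remaining tail $m\ge 2$ by absolute values, using $|\sin(\pi/2^{m+1})|\le \pi/2^{m+1}$ and $|\cos(\cdot)|\le1$, to obtain a bound of the form $4\pi^4\sum_{m\ge2}(\kappa/8)^m=4\pi^4(\kappa/8)^2/(1-\kappa/8)$. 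For $\kappa\le .6$ this tail is a small explicit constant.

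Next I would verify the resulting inequality on two regimes. On a bulk interval, say $[x_0,1-\delta]$ for a suitable $\delta$, the leading term $4\pi^3\sin(\pi x)$ is bounded below away from $0$, and a direct comparison with the (small, $\kappa$-uniform) bound on the $m\ge1$ contribution closes the argument. Near the right endpoint $x=1$, where $\sin(\pi x)$ degenerates again, I would note that at $x=1$ the $m=0$ term vanishes but the full series can be re-examined term by term: each factor $\cos\big(\pi(2x+1)/2^{m+1}\big)$ is evaluated near $x=1$, and one checks that the signs conspire so that $g''(1)>0$ strictly, then extends by continuity to a neighbourhood of $1$. The threshold $x_0<.027$ for $\kappa\le.6$ (and the cleaner $\kappa_0\in[.55,.56]$ alluded to in the text) emerges from solving the two-term inequality $\sin(\pi x_0)=\tfrac{\kappa}{4}\cdot\tfrac{\sqrt2}{2}\cos\big(\pi(2x_0+1)/4\big)+(\text{tail})$ for the critical $x_0$ at $\kappa=.6$.

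The main obstacle will be the behaviour near $x=0$: there $\sin(\pi x)$ vanishes linearly, so the leading term provides no room, and one must show that for $x>x_0$ it already exceeds the competing $m=1$ term, whose coefficient $\tfrac{\kappa}{4}\tfrac{\sqrt2}{2}$ is not negligible. Making the threshold as sharp as $.027$ requires that the two-term estimate be genuinely quantitative rather than a crude geometric bound, so the care lies in keeping the $m=1$ term explicit and bounding only the $m\ge2$ tail. A secondary technical point is confirming that the bound is uniform over all $\kappa\le.6$, which follows because the tail bound is monotone increasing in $\kappa$ and the $m=1$ correction is linear in $\kappa$, so the worst case is attained at $\kappa=.6$; evaluating the critical inequality there pins down $x_0$.
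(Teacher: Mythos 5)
Your plan is essentially the paper's proof: both keep the first two terms of the series for $g''$ explicitly (the same two-term approximation $4\pi^3\big[\sin(\pi x)-\tfrac{\kappa}{8}\big(\cos(\tfrac{\pi x}{2})-\sin(\tfrac{\pi x}{2})\big)\big]$), bound the $m\ge 2$ remainder by a geometric series in $\kappa/8$, and locate the last sign change numerically below $.027$. One quantitative correction: with $|\sin(\pi/2^{m+1})|\le \pi/2^{m+1}$ and $|\cos|\le 1$ the tail bound is $4\pi^3\sum_{m\ge2}(\kappa/4)^m\,\pi\,2^{-(m+1)}=2\pi^4\sum_{m\ge2}(\kappa/8)^m=\tfrac{\pi^4}{4}\tfrac{\kappa^2}{8-\kappa}$ rather than your $4\pi^4\sum_{m\ge2}(\kappa/8)^m$, and this factor of $2$ is not harmless here --- with the inflated constant the critical inequality at $\kappa=.6$ only closes for $x\gtrsim .029$, so you would miss the stated threshold $.027$, whereas the correct constant (which the paper obtains via the monotone sandwich $R(\kappa,0)\le R(\kappa,x)\le R(\kappa,1)$) leaves a positive margin at $x=.027$.
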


\pf
Approximating by the first two terms, we get for any $\kappa\ge \eh, x\in[0,1]$
\bea
g''(x)
&=&-4\pi^3\Big[\cos\Big(\frac{\pi}{2}(1+2x)\Big)+\frac{\kappa}{4}\sin\Big(\frac{\pi}{4}\Big)\cos\Big(\frac{\pi}{4}(1+2x)\Big)\Big]+R(\kappa,x)
\\
&=& -4\pi^3\Big[-2\sin\Big(\frac{\pi}{2}x\Big)\cos\Big(\frac{\pi}{2}x\Big)+\frac{\kappa}{8}\Big(\cos\Big(\frac{\pi}{2}x\Big)-\sin\Big(\frac{\pi}{2}x\Big)\Big)\Big]+R(\kappa,x),
\eea
where
$$
R(\kappa,x)=-4\pi^3\sum^{\infty}_{m=2}\Big(\frac{\kappa}{4}\Big)^{m}
\sin \Big(\frac{\pi}{2^{m+1}}\Big)\cos \Big(\pi \frac{2x+1}{2^{m+1}}\Big).
$$
Let us estimate $R$. Since $\cos$ is decreasing on $[0,\pi]$, we have
$$
R(\kappa,0)\leq R(\kappa,x)\leq R(\kappa,1).
$$
We estimate $R(\kappa,0)$ below and  $R(\kappa,1)$ above. We have

 \begin{align*}
 R(\kappa,0)=&-4\pi^3\sum^{\infty}_{m=2}\Big(\frac{\kappa}{4}\Big)^{m}
 \sin \Big(\frac{\pi}{2^{m+1}}\Big)\cos \Big( \frac{\pi}{2^{m+1}}\Big)
=-2\pi^3\sum^{\infty}_{m=2}\Big(\frac{\kappa}{4}\Big)^{m}\sin \Big(\frac{\pi}{2^{m}}\Big).
 \end{align*}
 Now for $m\geq 2$
 $$
 \frac{ \sin \big(\frac{\pi}{2^{m}}\big)}{\frac{\pi}{2^{m}}}\leq 1 ,\,\,\, \text {hence } \,\,\,\sin \Big(\frac{\pi}{2^{m}}\Big)\leq \frac{\pi}{2^{m}},
 $$
 and therefore
 \begin{align*}
 R(\kappa,0)\geq&-2\pi^3\sum^{\infty}_{m=2}\Big(\frac{\kappa}{4}\Big)^{m}
  \frac{\pi}{2^{m}}
  =-2\pi^4\sum^{\infty}_{m=2}\Big(\frac{\kappa}{8}\Big)^{m}\\
  =&-2\pi^4\Big(\frac{\kappa}{8}\Big)^2\cdot\frac{1}{1-\frac{\kappa}{8}}=-\frac{\pi^4}{4}\frac{\kappa^2}{8-\kappa}
  .
 \end{align*}

Moreover

  \begin{align*}
 R(\kappa,1)=&-4\pi^3\sum^{\infty}_{m=2}\Big(\frac{\kappa}{4}\Big)^{m}
 \sin \Big(\frac{\pi}{2^{m+1}}\Big)\cos\Big(  \frac{3\pi}{2^{m+1}}\Big)
\\
 =&-2\pi^3\sum^{\infty}_{m=2}\Big(\frac{\kappa}{4}\Big)^{m}
 \Big(\sin \Big(\frac{\pi}{2^{m-1}}\Big)-\sin \Big(\frac{\pi}{2^{m}}\Big)\Big)
\\
 =&-2\pi^3\sum^{\infty}_{m=2}\Big(\frac{\kappa}{4}\Big)^{m}
\sin \Big(\frac{\pi}{2^{m-1}}\Big)
+2\pi^3\sum^{\infty}_{m=2}\Big(\frac{\kappa}{4}\Big)^{m}
\sin \Big(\frac{\pi}{2^{m}}\Big).
 \end{align*}
Let us estimate both terms separately. Firstly,
\begin{align*}
 2\pi^3\sum^{\infty}_{m=2}\Big(\frac{\kappa}{4}\Big)^{m}
\sin \Big(\frac{\pi}{2^{m}}\Big) \leq
& 2\pi^3\sum^{\infty}_{m=2}\Big(\frac{\kappa}{4}\Big)^{m}
 \frac{\pi}{2^{m}}
 =\frac{\pi^4}{4}\frac{\kappa^2}{8-\kappa}
\end{align*}
 Secondly, for $m\geq 2$
 \begin{align*}
 \sin \Big(\frac{\pi}{2^{m-1}}\Big)
=
&\frac{\sin \big(\frac{\pi}{2^{m-1}}\big)}{\frac{\pi}{2^{m-1}}}\frac{\pi}{2^{m-1}}
\\
 \geq
& \frac{\sin \big(\frac{\pi}{2}\big)}{\frac{\pi}{2}}\frac{\pi}{2^{m-1}}=\frac{2}{\pi}\cdot\frac{\pi}{2^{m-1}}=\frac{1}{2^{m-2}}.
 \end{align*}
Therefore
 \begin{align*}
 -2\pi^3\sum^{\infty}_{m=2}\Big(\frac{\kappa}{4}\Big)^{m}
 \sin \Big(\frac{\pi}{2^{m-1}}\Big)
\leq
& -2\pi^3\sum^{\infty}_{m=2}\Big(\frac{\kappa}{4}\Big)^{m}
 \frac{\pi}{2^{m-1}} \notag
\\
 =&-8\pi^3\sum^{\infty}_{m=2}\Big(\frac{\kappa}{8}\Big)^{m}=-8\pi^3\Big(\frac{\kappa}{8}\Big)^{2}\frac{8}{8-\kappa}
=-\pi^3\frac{\kappa^2}{8-\kappa}.
 \end{align*}
In summary we obtain
 \begin{align*}
 R(\kappa,1)\leq& -\pi^3\frac{\kappa^2}{8-\kappa}+\frac{\pi^4}{4}\frac{\kappa^2}{8-\kappa}=-\frac{\pi^4}{4}\frac{\kappa^2}{8-\kappa}\Big(-1+\frac{4}{\pi}\Big).
 \end{align*}
The desired estimate for the remainder term thus reads
 $$-\frac{\pi^4}{4}\frac{\kappa^2}{8-\kappa}\leq R(\kappa,x)\leq -\frac{\pi^4}{4}\frac{\kappa^2}{8-\kappa}\Big(\frac{4}{\pi}-1\Big)\leq -\frac{\pi^4}{4}\frac{\kappa^2}{8-\kappa}\cdot 0.27.
 $$
 Therefore, we also obtain that the root of $g''$ is located between the unique roots of
 $$
 \bar h (x)=-2\sin \Big(\frac{\pi}{2}x\Big)\cos\Big(\frac{\pi}{2}x\Big)
+\frac{\kappa}{8}\Big(\cos\Big(\frac{\pi}{2}x\Big)-\sin\Big(\frac{\pi}{2}x\Big)\Big)+\frac{\pi}{16}\frac{\kappa^2}{8-\kappa}
 $$
 and
  $$
 \underline {h} (x)
=-2\sin \Big(\frac{\pi}{2}x\Big)\cos\Big(\frac{\pi}{2}x\Big)
+\frac{\kappa}{8}\Big(\cos\Big(\frac{\pi}{2}x\Big)-\sin\Big(\frac{\pi}{2}x\Big)\Big)+\frac{\pi}{16}\cdot 0.27\cdot\frac{\kappa^2}{8-\kappa}
 $$
Clearly, $\underline {h}(0)>0 $, and we see numerically that $\underline {h}(.027)<0$ for all $\kappa\le \kappa_0.$
Hence the root of $g''$ is bounded above by $.027$. See Figure \ref{Plotg''}.\epf

Note that in many of the subsequent figures upper (in red) and lower (in blue) estimates are very close, so that in print they appear as practically indistinguishable.


\begin{figure}[ht]
\centering
\includegraphics[width=\textwidth]{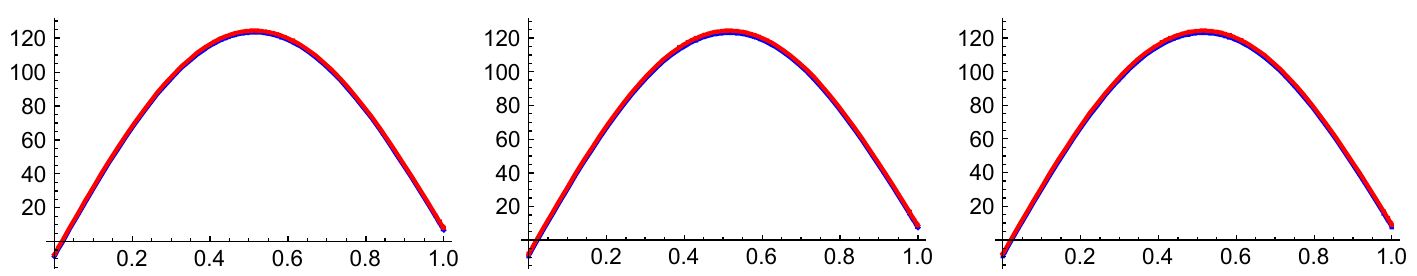}
\caption{Upper and lower approx.~of graph of $g''$ for $\kappa=0.5, 0.55, 0.56$ (left to right). The upper and lower approx.~are very close and nearly indistinguishable, as in several graphics of the paper. For a certain $x_0$, $g''|_{[x_0,1]}>0$.}
\label{Plotg''}
\end{figure}

We next discuss geometric properties of $g'$.

\begin{lem}\label{l:geometry_gprime}
There exist $0<y_0<y_1$ such that $y_0\ge .55, y_1\le .6$ and such that for any $\kappa\le .6$ the unique root of $g'$ is located in $[y_0,y_1].$
$g'$ is negative below this root, and positive above.
\end{lem}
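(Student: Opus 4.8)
The plan is to combine the convexity information from Lemma~\ref{l:positivity_gdoubleprime} with a two‑term truncation of the series \eqref{defgpri1}. Since $g''>0$ on $[x_0,1]$ with $x_0<.027$, the function $g'$ is strictly increasing on $[x_0,1]$, hence has \emph{at most} one zero there; the whole task is therefore to exclude zeros on $[0,x_0]$ and to localize the single zero on $[x_0,1]$ inside $[.55,.6]$. I would write $g'=P+R_1$, where
\[
P(x)=-4\pi^2\Big[\cos(\pi x)+\tfrac{\kappa}{2}\sin\Big(\tfrac{\pi}{4}\Big)\sin\Big(\tfrac{\pi(2x+1)}{4}\Big)\Big]
\]
collects the terms $m=0,1$ (using $\sin(\pi(2x+1)/2)=\cos(\pi x)$) and $R_1(x)=-4\pi^2\sum_{m\ge2}(\kappa/2)^m\sin(\pi/2^{m+1})\sin(\pi(2x+1)/2^{m+1})$ is the tail. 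For $x\in[0,1]$ and $m\ge2$ both sine factors are positive (their arguments lie in $(0,\pi)$), so $R_1<0$ on all of $[0,1]$; in particular $g'<P$ there, which is what I need for every \emph{upper} bound. Bounding $\sin\theta\le\theta$ gives the uniform tail estimate
\[
|R_1(x)|\le \frac{3\pi^4}{8}\,\frac{\kappa^2}{8-\kappa},\qquad x\in[0,1],
\]
used only at the right endpoint.

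For the exclusion on $[0,x_0]$ and the lower endpoint: on $[0,x_0]$ one has $\cos(\pi x)\ge\cos(.027\pi)>0$ while the $m=1$ contribution to the bracket is nonnegative, so the bracket is positive, $P<0$, and hence $g'<P<0$; this rules out zeros near $0$. At $x=.55$ the bracket equals $\cos(.55\pi)+\tfrac{\kappa}{2}\sin(\pi/4)\sin(.525\pi)\approx-0.156+0.352\,\kappa$, which stays positive for every $\kappa\in{]\eh,.6]}$ (it is increasing in $\kappa$ and already positive at $\kappa=\eh$); thus $P(.55)<0$ and $g'(.55)<0$.

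The delicate step is the right endpoint $x=.6$, which is also where the admissible range of $\kappa$ gets fixed. Here the bracket equals $\cos(.6\pi)+\tfrac{\kappa}{2}\sin(\pi/4)\sin(.55\pi)\approx-0.309+0.349\,\kappa$, which is \emph{negative} for $\kappa\le.6$, so $P(.6)=4\pi^2(0.309-0.349\,\kappa)>0$. Since $R_1<0$, I must show that $P(.6)$ dominates the tail, i.e. $P(.6)-|R_1(.6)|>0$. Both $P(.6)$ (decreasing in $\kappa$) and the bound on $|R_1|$ (increasing in $\kappa$) are worst at $\kappa=.6$, where $P(.6)\approx3.9$ and $|R_1|\le1.8$, leaving $g'(.6)\ge2.1>0$. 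I expect this to be the main obstacle: the leading value $P(.6)$ is a \emph{difference} of two comparable terms and is only a few units large, so the crude tail bound has to be sharp enough to be beaten with a uniform margin over all $\kappa\in{]\eh,.6]}$; sharpening the remainder (or keeping a third term) is what would push the endpoint past $.6$.

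Finally I assemble the pieces with $y_0=.55$ and $y_1=.6$, so that $y_0\ge.55$ and $y_1\le.6$. Strict monotonicity of $g'$ on $[x_0,1]$ together with $g'(.55)<0<g'(.6)$ produces a unique zero $r\in(.55,.6)$, with $g'<0$ on $[x_0,r)$ and $g'>0$ on $(r,1]$; combined with $g'<0$ on $[0,x_0]$ this gives $g'<0$ on $[0,r)$ and $g'>0$ on $(r,1]$. Hence $g'$ has exactly one root in $[0,1]$, it lies in $[y_0,y_1]$, and $g'$ is negative below it and positive above, as claimed.
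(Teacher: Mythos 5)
Your proof is correct and follows essentially the same route as the paper's: approximate $g'$ by its first two terms, bound the tail by $\tfrac{3\pi^4}{8}\tfrac{\kappa^2}{8-\kappa}$, locate the sign change numerically in $[.55,.6]$, and invoke Lemma \ref{l:positivity_gdoubleprime} for uniqueness. You are in fact a little more careful than the paper in explicitly excluding zeros on $[0,x_0]$ (where the positivity of $g''$ is not available) and in using the negativity of the tail to dispense with a remainder estimate at the left endpoint.
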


\pf

We have for $x\in[0,1]$
\begin{align*}
g'(x)=
&-4\pi^2\Big(\cos (\pi x)+\frac{\kappa}{2}\sin\Big(\frac{\pi}{4}\Big)\sin\Big(\frac{\pi(1+2x)}{2}\Big)\Big)
\\
&\qquad \qquad \qquad
-4\pi^2\sum^{\infty}_{m=2}\Big(\frac{\kappa}{4}\Big)^{m}
\sin \Big(\frac{\pi}{2^{m+1}}\Big)\sin \Big(\pi \frac{2x+1}{2^{m+1}}\Big)
\\
=&-4\pi^2\Big[\cos^2\Big(\frac{ \pi x}{2}\Big)
             -\sin^2\Big(\frac{ \pi x}{2}\Big)
						 +\frac{\kappa}{4}\Big(\cos\Big(\frac{ \pi x}{2}\Big)
						                       +\sin\Big(\frac{ \pi x}{2}\Big)\Big)\Big]+ R(\kappa,x).
\end{align*}
Let us estimate $R$. We have
$$
R(\kappa,1)\leq R(\kappa,x)\leq R(\kappa,0).
$$
Moreover
\begin{align*}
R(\kappa,0)
\leq
&-4\pi^2\sum^{\infty}_{m=2}\Big(\frac{\kappa}{2}\Big)^{m}
\Big(\frac{\sin \big(\frac{\pi}{8}\big)}{\frac{\pi}{8}}\Big)^2
\Big(\frac{\pi}{2^{m+1}}\Big)^2
=
-64\pi^2\sum^{\infty}_{m=2}\Big(\frac{\kappa}{8}\Big)^{m}\sin^2\Big(\frac{\pi}{8}\Big)
\notag
\\
\le
&-64\pi^2\cdot 0.14\cdot \frac{\kappa^2}{8^2}\frac{8}{8-\kappa}
=
-8\pi^2\cdot 0.14\cdot\frac{\kappa^2}{8-\kappa},
\end{align*}
and
\begin{align*}
R(\kappa,1)=
&-4\pi^2\sum^{\infty}_{m=2}\Big(\frac{\kappa}{2}\Big)^{m}
\sin \Big(\frac{\pi}{2^{m+1}}\Big)\sin \Big( \frac{3\pi}{2^{m+1}}\Big)
\\
\geq&-4\pi^2\sum^{\infty}_{m=2}\Big(\frac{\kappa}{2}\Big)^{m}
 \frac{\pi}{2^{m+1}}  \frac{3\pi}{2^{m+1}}
 =
-3\pi^4\sum^{\infty}_{m=2}\Big(\frac{\kappa}{8}\Big)^{m}
=
-\frac{3\pi^4}{8}\frac{\kappa^2}{8-\kappa}.
\end{align*}
So we get
$$
-\frac{3\pi^4}{8}\frac{\kappa^2}{8-\kappa}\leq R(\kappa,x)\leq-  8\pi^2\cdot 0.14\cdot\frac{\kappa^2}{8-\kappa}.
$$
Correspondingly, the root of $g'$ is comprised between the roots of
\begin{align*}
\underline{h}(x)=
&\cos^2\Big(\frac{ \pi x}{2}\Big)
-\sin^2\Big(\frac{ \pi x}{2}\Big)
+\frac{\kappa}{4}\Big(\cos\Big(\frac{ \pi x}{2}\Big) +\sin\Big(\frac{ \pi x}{2}\Big)\Big)+\underbrace{\frac{3\pi^2}{32}}_{\equiv 0.92}\frac{\kappa^2}{8-\kappa}
\end{align*}
and
\begin{align*}
\bar{h}(x)=
&\cos^2\Big(\frac{ \pi x}{2}\Big)
 -\sin^2\Big(\frac{ \pi x}{2}\Big)
 +\frac{\kappa}{4}\Big(\cos\Big(\frac{ \pi x}{2}\Big) +\sin\Big(\frac{ \pi x}{2}\Big)\Big)+0.28\frac{\kappa^2}{8-\kappa}.
\end{align*}
A numerical estimate easily yields $0<y_0<y_1$, $y_0\ge .55$ and $y_1\le .6$ such that the unique root of $g'$ is located in $[y_0,y_1]$, see Figure \ref{Plotg'}. For uniqueness, an appeal to Lemma \ref{l:positivity_gdoubleprime} is enough. \epf

\begin{figure}[ht]
\centering
\includegraphics[width=\textwidth]{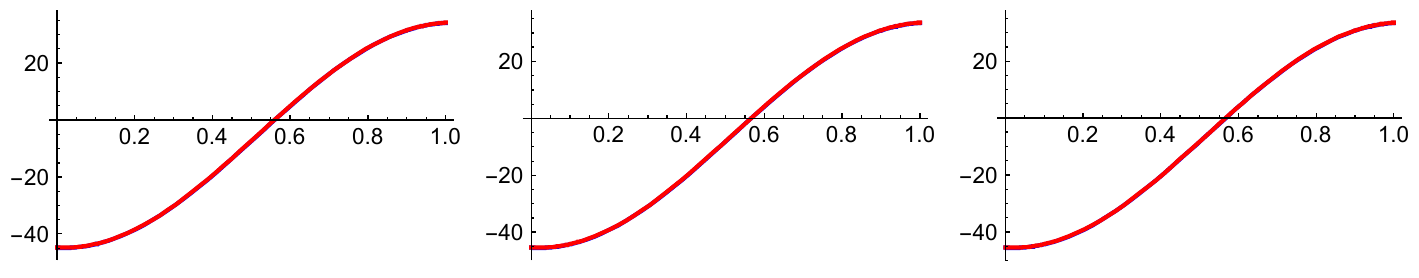}
\caption{Upper and lower approx.~of graph of $g'$ for $\kappa=0.5, 0.55, 0.56$ (left to right). Illustration of the geometric properties of $g'$ as per Lemma \ref{l:geometry_gprime}.}
\label{Plotg'}
\end{figure}

We finally assess the geometric properties of $g$.
\begin{lem}\label{l:geometry_g}
There exist $0<z_0<z_1$ such that $z_0\ge .05, y_1\le .15$ and such that for any $\kappa\le .6$ the unique root of $g$ is located in $[z_0,z_1].$
$g$ is strictly convex on $[x_0,1]$ and strictly decreasing on $[0,x_0].$ It has a strict global minimum at the root of $g'$, i.e.~in $[y_0,y_1].$
\end{lem}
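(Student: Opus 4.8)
The plan is to combine the two preceding lemmas with one new endpoint estimate, treating the four assertions in the order: convexity, global monotonicity and minimum, then root localization.

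First, strict convexity on $[x_0,1]$ is immediate from Lemma \ref{l:positivity_gdoubleprime}, which asserts $g''>0$ on $[x_0,1]$ for every $\kappa\le .6$. Next I would invoke Lemma \ref{l:geometry_gprime}: it furnishes the unique root $x^\ast\in[y_0,y_1]$ of $g'$, with $y_0\ge .55$, together with the sign information $g'<0$ on $[0,x^\ast)$ and $g'>0$ on $(x^\ast,1]$. Because $x_0<.027<.55\le y_0\le x^\ast$, the interval $[0,x_0]$ lies strictly to the left of $x^\ast$, so $g'<0$ there and $g$ is strictly decreasing on $[0,x_0]$, as claimed. More globally, $g$ is strictly decreasing on $[0,x^\ast]$ and strictly increasing on $[x^\ast,1]$; hence $x^\ast$ is the unique, strict, global minimizer of $g$ on $[0,1]$. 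This establishes all the qualitative statements, leaving only the location of the root of $g$.

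For the root I would reuse the two-term-plus-remainder scheme of Lemmas \ref{l:positivity_gdoubleprime} and \ref{l:geometry_gprime}. Writing
$$g(x)=4\pi\Big[\sin\Big(\frac{\pi}{2}\Big)\cos\Big(\frac{\pi(2x+1)}{2}\Big)+\kappa\sin\Big(\frac{\pi}{4}\Big)\cos\Big(\frac{\pi(2x+1)}{4}\Big)\Big]+R(\kappa,x),$$
with $R$ the tail over $m\ge 2$, I would control $R$ by constant multiples of $\kappa^2/(8-\kappa)$, using $\sin\theta\le\theta$ on the positive sine factors and the monotonicity of $\cos$ on $[0,\pi]$ for the cosine factors (for $m\ge 2$ the argument $\pi(2x+1)/2^{m+1}$ stays below $\pi/2$), exactly as in the two earlier proofs. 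Feeding the resulting upper and lower comparison functions into the endpoints $.05$ and $.15$ then yields $g(z_0)>0$ and $g(z_1)<0$ for suitable $.05\le z_0<z_1\le .15$. Since $z_1<x^\ast$ and $g$ is strictly decreasing on $[0,x^\ast]$, there is exactly one root of $g$ in $[z_0,z_1]$, and moreover $g(x^\ast)\le g(z_1)<0$, so the global minimum sits below the $x$-axis. To rule out a second root on the increasing branch I would check, by the same endpoint estimate, that $g(1)<0$ for all $\kappa\le .6$; then $g$ rises from $g(x^\ast)<0$ to $g(1)<0$ and stays negative on $[x^\ast,1]$, so the root in $[z_0,z_1]$ is the only one.

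The main obstacle is this final computational step: producing remainder bounds tight enough that the comparison functions are simultaneously positive at $x=.05$ and negative at $x=.15$ uniformly over $\kappa\le .6$, together with the sign check $g(1)<0$ that secures global uniqueness. These are the same tedious but routine trigonometric estimates already carried out for $g''$ and $g'$, and are the only ingredient not obtained directly by citing Lemmas \ref{l:positivity_gdoubleprime} and \ref{l:geometry_gprime}.
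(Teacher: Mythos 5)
Your proposal follows essentially the same route as the paper: strict convexity and the monotonicity/minimum statements are read off directly from Lemmas \ref{l:positivity_gdoubleprime} and \ref{l:geometry_gprime}, and the root of $g$ is then localized by sandwiching $g$ between two explicit comparison functions obtained from the first two terms of the series plus upper and lower bounds on the tail $R(\kappa,x)$, using $R(\kappa,1)\le R(\kappa,x)\le R(\kappa,0)$ and $\sin\theta\le\theta$. One concrete correction: for $g$ itself the tail has coefficients $\kappa^m$ (not $(\kappa/4)^m$ as for $g''$), so the bound $\sin(\pi/2^{m+1})\le\pi/2^{m+1}$ produces a geometric series in $\kappa/2$ and remainder bounds of the form $\kappa^2/(2-\kappa)$, not $\kappa^2/(8-\kappa)$; at $\kappa=.6$ the correct bound is roughly five times larger, so the endpoint checks at $.05$ and $.15$ must be run with these larger constants (the paper's bounds are $\frac{\pi\kappa^2}{2-\kappa}(4-\pi+\kappa(\pi-2))\le R\le \frac{\pi^2\kappa^2}{2-\kappa}\cdot\pi$, and the numerics still go through). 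Your explicit verification that $g(1)<0$, which rules out a second root on the increasing branch, is a point the paper leaves implicit and is a worthwhile addition.
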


\pf
An appeal to Lemma \ref{l:geometry_gprime} and Lemma \ref{l:positivity_gdoubleprime} shows that it only remains to estimate the location of the root of $g$.
We have
\bea
g(x)
&=&
4\pi\Big[\cos\Big(\frac{\pi}{2}(1+2x)\Big)
  +\kappa\sin\Big(\frac{\pi}{4}\Big)\cos\Big(\frac{\pi}{4}(1+2x)\Big)\Big] +R(\kappa,x)
	\\
&=&
4\pi\Big[-2\sin\Big(\frac{\pi x}{2}\Big)\cos\Big(\frac{\pi x}{2}\Big)
+\frac{\kappa}{2}\Big(\cos\Big(\frac{\pi x}{2}\Big)-\sin\Big(\frac{\pi x}{2}\Big)\Big)\Big] +R(\kappa,x),
\eea
with
\begin{align*}
R(\kappa,x)=&4\pi\sum^{\infty}_{m=2}\kappa^{m}
\sin \Big( \frac{\pi}{2^{m+1}}\Big)\cos\Big( \pi \frac{2x+1}{2^{m+1}}).
\end{align*}
By monotonicity of $\cos$ on $[0,\pi]$ we have
$$
R(\kappa,1)\leq  R(\kappa,x)\leq  R(\kappa,0).
$$
We estimate $R(\kappa,1)$ below and $R(\kappa,0)$ above. We may write
\begin{align*}
R(\kappa,0)=&2\pi\sum^{\infty}_{m=2}\kappa^{m}
\sin \Big( \frac{\pi}{2^{m}}\Big)
\\
\leq&2\pi^2\sum^{\infty}_{m=2}\Big(\frac{\kappa}{2}\Big)^{m}
=2\pi^2\Big(\frac{\kappa}{2}\Big)^{2}\frac{2}{2-\kappa}
=\pi^2\frac{\kappa^2}{2-\kappa},
\end{align*}
and
\begin{align*}
R(\kappa,1)
=&
2\pi\sum^{\infty}_{m=2}\kappa^{m}
\Big[\sin \Big( \frac{\pi}{2^{m-1}}\Big)
    -\sin  \Big(\frac{\pi}{2^{m}}\Big)\Big]
=
2\pi\Big[\kappa\sum^{\infty}_{m=1}\kappa^{m}
\sin \Big( \frac{\pi}{2^{m}}\Big)
-\sum^{\infty}_{m=2}\kappa^{m}
\sin  \Big(\frac{\pi}{2^{m}}\Big)\Big]
\\
=&2\pi\kappa^2 +2\pi(\kappa-1) \sum^{\infty}_{m=2}\kappa^{m}\sin \Big( \frac{\pi}{2^{m}}\Big)
\\
\geq
&
 2\pi\kappa^2 +2\pi(\kappa-1) \sum^{\infty}_{m=2}\kappa^{m} \frac{\pi}{2^{m}}
\\
=&2\pi\kappa^2 +2\pi^2(\kappa-1) \frac{\kappa^2}{4}\frac{2}{2-\kappa}
=\frac{\pi\kappa^2}{2-\kappa}(4-\pi+\kappa(\pi-2)).
\end{align*}
Hence we have
$$
\frac{\pi\kappa^2}{2-\kappa}\big(4-\pi+\kappa(\pi-2)\big)
\leq
R(\kappa,x)\leq\frac{\pi\kappa^2}{2-\kappa}\pi^2,
$$
and the root of $g$ is comprised between the roots of
$$
\underline{h}(x)
=-2\sin\Big(\frac{\pi x}{2}\Big)\cos\Big(\frac{\pi x}{2}\Big)
+\frac{\kappa}{2}\Big(\cos\Big(\frac{\pi x}{2}\Big)-\sin\Big(\frac{\pi x}{2}\Big)\Big)
+\frac{\kappa^2}{2(2-\kappa)}\big(4-\pi+\kappa(\pi-2)\big)
$$
and
$$
\bar h(x)
=-2\sin\Big(\frac{\pi x}{2}\Big)
   \cos\Big(\frac{\pi x}{2}\Big)
	+\frac{\kappa}{2}\Big(\cos\Big(\frac{\pi x}{2}\Big)-\sin\Big(\frac{\pi x}{2}\Big)\Big)+\frac{\kappa^2}{2(2-\kappa)}.
$$
A numerical estimate of the roots of $\underline{h}$ and $\bar{h}$ yields $z_0, z_1\in[0,1]$ with the desired properties. See Figure \ref{Plotg}. \epf

\begin{figure}[ht]
\centering
\includegraphics[width=\textwidth]{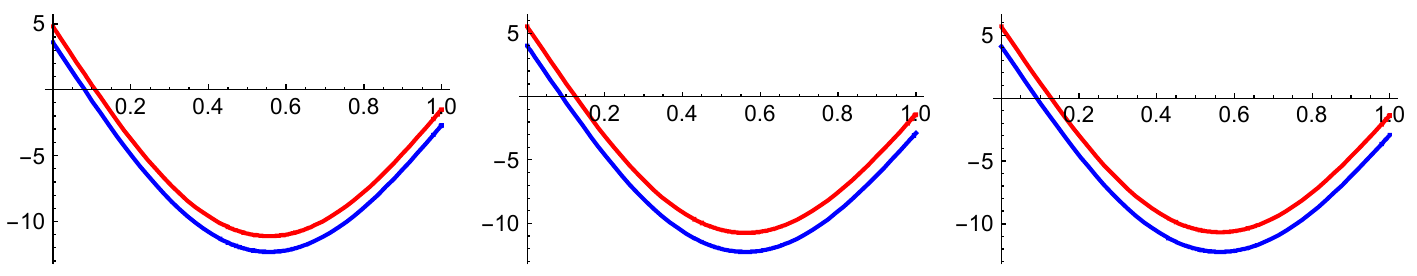}
\caption{Upper and lower approx.~of graph of $g$ for $\kappa=0.5, 0.55, 0.56$ (left to right). Illustration of the geometric properties of $g$ as per Lemma \ref{l:geometry_g}.}
\label{Plotg}
\end{figure}

\subsection{Transversality via properties of $g$}\label{ss:transversality_via_g}

In this subsection we will derive the transversality properties of $S$, starting from the representation in Corollary \ref{c:representation_S2}, and using the geometric properties of $g$ discussed in the previous subsection. We shall prove the following proposition.

\begin{pr}\label{p:transversality}
There exists $\kappa_0\in [0.55,0.56]$ such that for $\kappa\leq \kappa_0$ we have
	$$
	\inf_{x\in[0,1]} |V(x)|>0,
	$$
	uniformly on $\{\eh<|\xi-\eta|\}$.
	
	\end{pr}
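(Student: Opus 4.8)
The plan is to reduce everything to the geometry of the single function $g$ established in Lemmas \ref{l:positivity_gdoubleprime}--\ref{l:geometry_g}. Since $|V|$ is invariant under interchanging $\xi$ and $\eta$, I may assume $\eh<\xi-\eta$ and invoke Corollary \ref{c:representation_S}. Writing $D(x):=S(\xi,x)-S(\eta,x)$, it reads $D=\kappa g+E$, where
$$E(x)=\sum_{\ell\ge 2}\kappa^{\tau_\ell+1}g\big(B_2^{\tau_\ell}(\xi,x)\big)-\sum_{m\ge 1}\kappa^{\sigma_m+1}g\big(B_2^{\sigma_m}(\eta,x)\big)$$
gathers all but the leading term $\kappa g(x)$. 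Because $\eh<\xi-\eta$ forces $\tau_1=0$ and $\eta<\eh$, one has $\tau_\ell\ge\ell-1$ and $\sigma_m\ge m$, so the coefficients decay geometrically. Moreover each argument $B_2^{\tau_\ell}(\xi,x),B_2^{\sigma_m}(\eta,x)$ is of the affine contracting form $x\mapsto x/2^{j}+\mathrm{const}$ (see Corollary \ref{c:representation_S2}), so differentiating $E$ once or twice produces additional factors $2^{-\tau_\ell},2^{-\sigma_m}$ resp.~$4^{-\tau_\ell},4^{-\sigma_m}$. Summing the resulting geometric series and using that $\|g\|_\infty,\|g'\|_\infty,\|g''\|_\infty<\infty$, I obtain the uniform (in $(\xi,\eta)$) bounds $\|E\|_\infty\lesssim \kappa^2$, $\|E'\|_\infty\lesssim\kappa^2/(2-\kappa)$ and $\|E''\|_\infty\lesssim\kappa^2/(4-\kappa)$.

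The next step is to transfer the shape of $g$ to $D=\kappa g+E$. As $E''$ carries the strong contraction $4^{-\tau_\ell}$ it is very small, so $D''=\kappa g''+E''>0$ wherever $g''$ is bounded below; by Lemma \ref{l:positivity_gdoubleprime} this makes $D$ strictly convex on $[x_0+\epsilon,1]$, its non-convexity confined to small intervals near $0$ and $1$. Feeding this into $D'=\kappa g'+E'$ and using Lemma \ref{l:geometry_gprime} (the unique root of $g'$ lies in $[y_0,y_1]\subseteq[.55,.6]$, with $g'<0$ below and $g'>0$ above it), I conclude that $D$ possesses a unique global minimum $x_*$ lying in a small neighbourhood of $[y_0,y_1]$, with $D'<0$ to its left and $D'>0$ to its right on the convex region.

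The heart of the proof, and the step I expect to be hardest, is to show that this minimum lies strictly below the axis, $D(x_*)<0$, uniformly in $(\xi,\eta)$ and in $\kappa\le\kappa_0$. By Lemma \ref{l:geometry_g} the function $g$ attains a strictly negative minimum, bounded away from $0$, at the root of $g'$, and $D(x_*)\le\kappa g(x_*)+|E(x_*)|$. The difficulty is that $\kappa$ is allowed up to $\approx .55$, so the $O(\kappa^2)$ perturbation is \emph{not} negligible against $\kappa\,|g(x_*)|$: the crude sup-norm bound $\|E\|_\infty\lesssim\kappa^2$ is far too lossy to conclude $D(x_*)<0$. One must instead estimate $E(x_*)$ sharply, exploiting that each perturbation argument $x/2^{j}+\mathrm{const}$ lands in a specific sub-interval on which $g$ has a controlled sign and magnitude (in the spirit of the remainder estimates in Lemmas \ref{l:positivity_gdoubleprime}--\ref{l:geometry_g}), supplemented by numerical evaluation. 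It is precisely the balance in this estimate that forces the threshold $\kappa_0\in[.55,.56]$.

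Granting $D(x_*)<0$, transversality follows by splitting $[0,1]$ according to proximity to $x_*$. On a neighbourhood of $x_*$, continuity and $D(x_*)<0$ give $D<0$ bounded away from $0$, so $|V|\ge|D|>0$; off this neighbourhood $g'$, and hence $D'=\kappa g'+E'$, is bounded away from $0$ (here the extra contraction in the $E'$-bound makes the estimate more benign than that of the minimum), so $|V|\ge|D'|>0$; in particular near the endpoints $0$ and $1$, where $|g'|$ is large, $|D'|$ dominates and the non-convexity of $D$ there is harmless. Since every constant entering these lower bounds depends only on $g$ and on the geometric-series bounds for $E,E'$ --- never on the individual jump sequences $(\tau_\ell),(\sigma_m)$ --- the resulting lower bound on $|V(x)|$ is uniform, giving $\inf_{x\in[0,1]}|V(x)|>0$ uniformly on $\{\eh<|\xi-\eta|\}$, as claimed.
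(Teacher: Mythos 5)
Your overall strategy coincides with the paper's: reduce by symmetry to $\eh<\xi-\eta$, use the jump-time representation to write $S(\xi,\cdot)-S(\eta,\cdot)$ as $\kappa g$ plus geometrically decaying perturbations, establish convexity on a central interval and the correct signs of the derivative near the endpoints to obtain a unique global minimum, and finally show that this minimum is strictly negative. But at both quantitatively decisive points your argument has a genuine gap. First, the claim that $D''=\kappa g''+E''>0$ on the convex region because ``$E''$ is very small'' does not survive the numbers: when $\tau_2=\sigma_1=1$ (which happens whenever $\bxi_{-1}=\beeta_{-1}=1$) the leading perturbation of $g''(x)$ is $\frac{\kappa}{4}\big(g''(\frac{x+1}{2})-g''(\frac{x}{2})\big)$, and for $\kappa$ near $.55$ the triangle-inequality bound $\frac{\kappa}{2}\|g''\|_\infty$ (with $\|g''\|_\infty\approx 4\pi^3$) is of the same size as --- in fact slightly larger than --- the minimum of $g''$ on $[.1,.9]$. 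A sup-norm bound on $E''$ therefore cannot give positivity; the paper instead never separates the paired terms, but studies the \emph{differences} $k_1,\dots,k_4$ of $g''$ at the paired arguments (Lemmas \ref{l:monotonicity_differences} and \ref{l:monotonicity_differences2}), exploits their monotonicity and sign, and runs through the ten cases of Table \ref{table:Cases} one by one (Lemma \ref{lemposSdpri}). The same cancellation is needed for the sign of $D'$ on $[.9,1]$, where $|g'|$ is again comparable to the crude perturbation bound.

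Second, and more importantly, the step you yourself call the heart of the proof --- that the minimum lies strictly below the axis --- is not actually carried out: you observe that the crude bound is too lossy and state that ``one must estimate $E(x_*)$ sharply, supplemented by numerical evaluation,'' which is a description of the missing argument rather than the argument itself. The paper's resolution is worth noting because it also sidesteps the awkward dependence of $x_*$ on $(\xi,\eta)$: since $x_*$ is a global minimum, $D(x_*)\le D(.55)$, so it suffices to evaluate $D$ at the single fixed point $x=.55$ in each of the ten cases, with the explicit tail bound $2\kappa^4\|g\|_\infty/(1-\kappa)$ for the omitted terms, and to verify numerically that the result is negative for $\kappa\le\kappa_0$; this is precisely what determines the threshold $\kappa_0\in[.55,.56]$. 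Without these two ingredients --- the cancellation-based ten-case analysis for $D''$ and $D'$, and the pointwise evaluation of $D(.55)$ --- the proposal does not establish the proposition for any concrete $\kappa_0$.
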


\pf

For the proof, we shall approximate $S(\xi,\cdot)-S(\eta,\cdot)$ by the first three terms in the representation of Corollary \ref{c:representation_S2}. For reasons of symmetry in $\xi, \eta$ we shall argue only on the set $\{\eh< \xi-\eta\}.$ We will denote the sequences marking the upward jumps in the dyadic sequence representing $\xi$ by $(\tau_n)_{n\in\N}$, and the one for $\eta$ by $(\sigma_n)_{n\in\N}$. Here we tacitly assume that we omit dyadic $\xi, \eta$ that constitute a set of measure zero on $[0,1].$ We recall that $\tau_1=0,$ and $\sigma_1\ge \tau_2$ on this set.
Taking these prerequisites into account, we have to deal with the approximation
\begin{align}\label{eqrepS1}
S(\xi,x)-S(\eta,x)
=&\kappa g(x)+\kappa^{\tau_2+1}g\Big(\frac{x}{2^{\tau_2}}+\frac{1}{2^{\tau_2-\tau_1}}\Big)-\kappa^{\sigma_1+1}g\Big(\frac{x}{2^{\sigma_1}}\Big)\notag
\\
&+\kappa^{\tau_3+1}g\Big(\frac{x}{2^{\tau_3}}+\frac{1}{2^{\tau_3}}+\frac{1}{2^{\tau_3-\tau_2}}\Big)-\kappa^{\sigma_2+1}g\Big(\frac{x}{2^{\sigma_2}}+\frac{1}{2^{\sigma_2-\sigma_1}}\Big) \notag
\\
&+O(\kappa^{\tau_4+1})+O(\kappa^{\sigma_3+1})\notag\\
=& f_{\eta,\xi}+O(\kappa^{\tau_4+1})+O(\kappa^{\sigma_3+1}),
\end{align}
for which the error terms will have to be estimated. The values of $\tau_2, \tau_3, \sigma_1, \sigma_2$ for the different possible values of $\bxi_{-1}, \bxi_{-2}, \beeta_{-1}, \beeta_{-2}$ are listed in Table \ref{table:Cases}.

\begin{table}[ht]%
	\centering
	\begin{tabular}{c|cccccccc}
		Cases & $\bxi_{-1}$ & $\beeta_{-1}$ & $\bxi_{-2}$ & $\beeta_{-2}$&$\tau_2$& $\tau_3$& $\sigma_1$& $\sigma_2$\\
		\hline
		1 & 0&0&0&0&$\geq$ 3& $\geq$ 4 & $\geq$ 3&$ \geq$ 4\\
		2 & 0&0&1&0&= 2& $\geq$ 3 & $\geq$ 3& $\geq$ 4\\
		3 & 0&0&1&1&= 2& $\geq$ 3 & =2& $\geq$ 3\\
		4 & 1&0&0&0&=1& $\geq $3 & $\geq$ 3& $\geq$ 4\\
		5 & 1&0&0&1&=1& $\geq$ 3 & =2& $\geq$ 3\\
		6 & 1&0&1&0&=1& =2 & $\geq$ 3& $\geq$ 4\\
		7 & 1&0&1&1&=1& =2 & =2& $\geq$ 3\\
		8 & 1&1&0&0&=1& $\geq$ 3 & =1& $\geq$ 3\\
		9 & 1&1&1&0&=1& =2 & =1& $\geq $3\\
		10 & 1&1&1&1&=1& =2 & =1& =2\\
		\hline
	\end{tabular}
	\caption{
		Possible cases for the components $\bxi_{-i}, \beeta_{-i}$, $i=1,2$ in the approximation of $S(\xi,\cdot)-S(\eta,\cdot)$ by its first three terms.}
	\label{table:Cases}
\end{table}

We also have

\begin{align}\label{e:eqrepSprime}
S'(\xi,x)-S'(\eta,x)
=
&
\kappa\Big\{ g'(x)+\big(\frac{\kappa}{2}\big)^{\tau_2}g'\big(\frac{x}{2^{\tau_2}}+\frac{1}{2^{\tau_2-\tau_1}}\big)-\big(\frac{\kappa}{2}\big)^{\sigma_1} g'\big(\frac{x}{2^{\sigma_1}}\big)
\notag\\
&+\big(\frac{\kappa}{2}\big)^{\tau_3} g'\big(\frac{x}{2^{\tau_3}}+\frac{1}{2^{\tau_3}}+\frac{1}{2^{\tau_3-\tau_2}}\big)-\big(\frac{\kappa}{2}\big)^{\sigma_2} g'\big(\frac{x}{2^{\sigma_2}}+\frac{1}{2^{\sigma_2-\sigma_1}}\big)
 \notag\\
&+O\Big(\big(\frac{\kappa}{2}\big)^{\tau_4}\Big)+O\Big(\big(\frac{\kappa}{2}\big)^{\sigma_3}\Big)\Big\}
\notag\\
=&\kappa\Big\{f'_{\xi,\eta}(x)
+O\Big(\big(\frac{\kappa}{2}\big)^{\tau_4}\Big)
+O\Big(\big(\frac{\kappa}{2}\big)^{\sigma_3}\Big)\Big\},
\end{align}
and

\begin{align}\label{eqrepSDprime}
S''(\xi,x)-S''(\eta,x)
=
&\kappa\Big\{ g''(x)+\big(\frac{\kappa}{4}\big)^{\tau_2}g''\big(\frac{x}{2^{\tau_2}}+\frac{1}{2^{\tau_2-\tau_1}}\big)-\big(\frac{\kappa}{4}\big)^{\sigma_1}g''\big(\frac{x}{2^{\sigma_1}}\big)
\notag
\\
&
+\big(\frac{\kappa}{4}\big)^{\tau_3}g''\big(\frac{x}{2^{\tau_3}}+\frac{1}{2^{\tau_3}}+\frac{1}{2^{\tau_3-\tau_2}}\big)-\big(\frac{\kappa}{4}\big)^{\sigma_2}g''\big(\frac{x}{2^{\sigma_2}}+\frac{1}{2^{\sigma_2-\sigma_1}}\big)
\notag
\\
&
+O\Big(\big(\frac{\kappa}{4}\big)^{\tau_4}\Big)+O\Big(\big(\frac{\kappa}{4}\big)^{\sigma_3}\Big)\Big\} \notag
\\
=&\kappa\Big\{f''_{\xi,\eta}(x)
+O\Big(\big(\frac{\kappa}{4}\big)^{\tau_4}\Big)
+O\Big(\big(\frac{\kappa}{4}\big)^{\sigma_3}\Big)\Big\}.
\end{align}

Our strategy of the proof of transversality mainly consists in establishing that on $\{\eh< \xi-\eta\}$ the function $S(\xi,\cdot)-S(\eta,\cdot)$ has similar properties of $g$ studied in the previous subsection, i.e.~it is convex on the interval $[.1,.9]$, decreasing on $[0,.1]$ and increasing on $[.9,1]$. It therefore possesses a unique global minimum on $[0,1].$ To prove $
\inf_{x\in[0,1]} |V(x)|>0$, it will be enough to show that the global minimum is below the axis. The essential part of this strategy is composed of the analysis of $S''(\xi,\cdot)-S''(\eta,\cdot)$, to obtain the convexity and monotonicity properties mentioned. These results are obtained in the auxiliary lemmas in the remainder of this section, from Lemma \ref{l:monotonicity_differences} to Lemma \ref{lemposSdpri2}, where we treat combinations of $g''$ arising at different arguments that appear in $f''_{\xi,\eta}$, and establish monotonicity properties.

As a consequence of Lemmas \ref{lemposSdpri} and \ref{lemposSdpri2} below we know that for (non-dyadic) $\xi, \eta\in[0,1]$ such that $\eh<\xi-\eta$ the function $S(\xi,\cdot)-S(\eta,\cdot)$ is strictly convex on $[.1,.9]$, strictly decreasing on $[0,.1]$ and strictly increasing on $[.9,1].$ Therefore it possesses a unique global minimum.

To complete the proof of transversality, we therefore just have to show that this minimum is not zero. We further know that $S(\xi,\cdot)-S(\eta,\cdot)$ is a perturbation of $g$, that has its global minimum on $[.55,.6]$. We finish the proof by giving numerical upper estimates of the values $S(\xi,.55)-S(\eta,.55)$, with a careful error estimate integrated, that are seen to be negative for $\kappa\le \kappa_0.$
The error is estimated by $2 \sum_{l=3}^\infty \kappa^{l+1} \|g\|_\infty \le 2\frac{\kappa^4}{1-\kappa} \|g\|_\infty$ (where $\|\cdot\|_\infty$ stands for the usual supremum norm). In Figure \ref{PlotS} we plot $S(\xi,.55)-S(\eta,.55)$ as a function of $\kappa$ on the interval $[\eh,.55]$ in the ten cases. This completes the proof of Proposition \ref{p:transversality}. \epf

\begin{figure}[h!tbp]
\centering
\includegraphics[width=\textwidth]{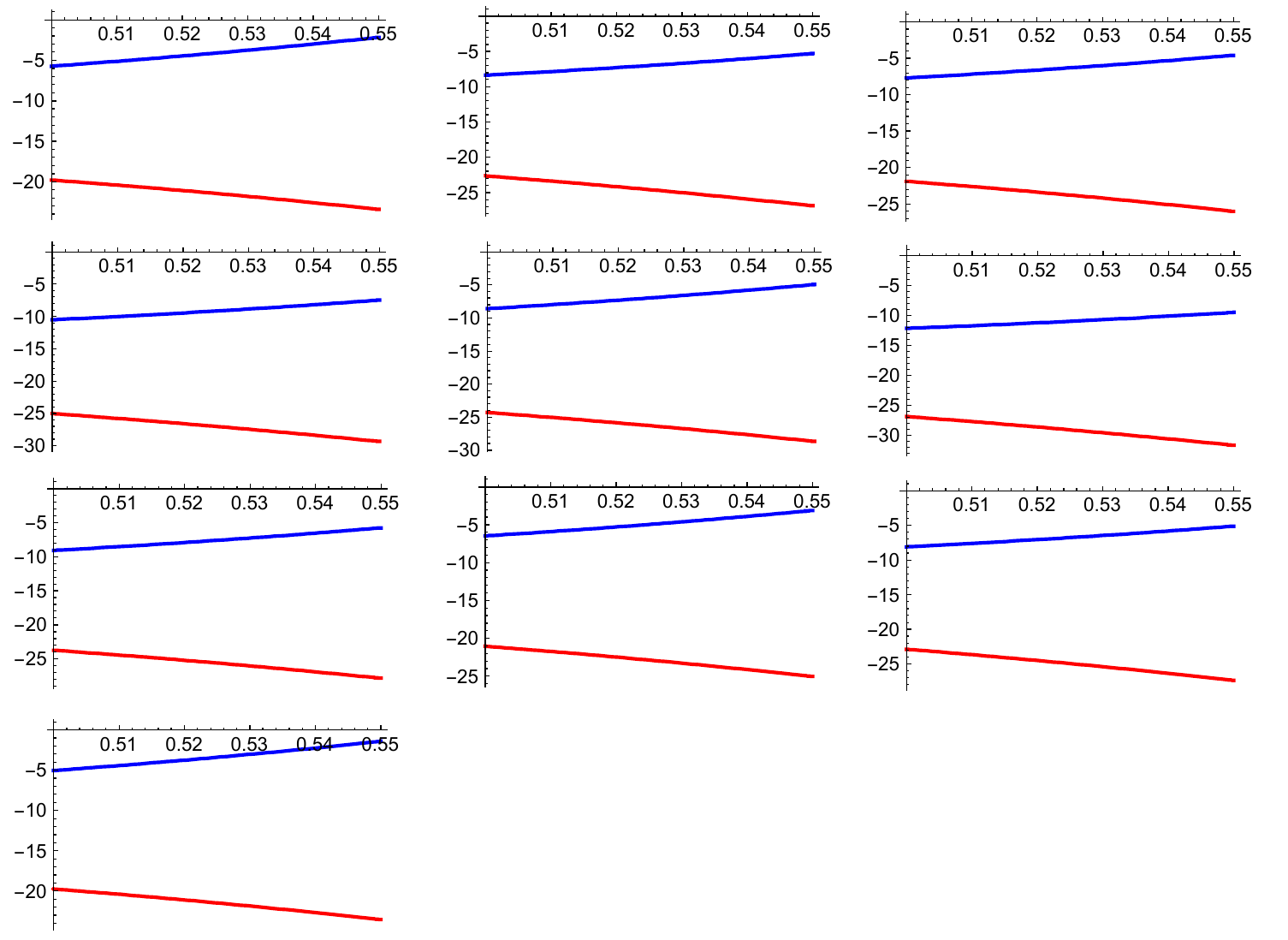}
\caption{Upper and lower approx.~of $S(\xi,.55)-S(\eta,.55)$ as function of $\kappa \in [\eh, .55]$, for the ten cases of Table \ref{table:Cases} (Case 1 on top left, continuing in the usual fashion left to right, top to bottom). In all ten cases, the difference is negative for the values of $\kappa$ as per conclusion of Proposition \ref{p:transversality}.}
\label{PlotS}
\end{figure}

\begin{lem}\label{l:monotonicity_differences}
We have
\begin{align*}
k_1(x):= g''(\frac{x+1}{2})-g''(\frac{x}{2})
=& 8\pi^3\sum^{\infty}_{m=0}(\frac{\kappa}{4})^{m}
\sin \Big( \frac{\pi}{2^{m+1}}\Big)
\sin \Big( \frac{\pi}{2^{m+2}}\Big)
\sin  \Big(\frac{\pi}{2^{m+2}}(3+2x)\Big),
\\
k_2(x) := g''(\frac{x+1}{4}+\frac{1}{2})-g''(\frac{x}{4}+\frac{1}{2})
=& 8\pi^3\sum^{\infty}_{m=0}(\frac{\kappa}{4})^{m}
\sin \Big( \frac{\pi}{2^{m+1}}\Big)
\sin \Big( \frac{\pi}{2^{m+3}}\Big)
\sin \Big( \frac{\pi}{2^{m+2}}(\frac{9}{2}+x)\Big),
\\
k_3(x) := g''(\frac{x+1}{4})-g''(\frac{x}{4})
=&8\pi^3\sum^{\infty}_{m=0}(\frac{\kappa}{4})^{m}
\sin \Big( \frac{\pi}{2^{m+1}}\Big)
\sin \Big(\frac{\pi}{2^{m+3}}\Big)
\sin \Big(\frac{\pi}{2^{m+2}}(3+x)\Big).
\end{align*}
Moreover, $k_1,\cdots,k_3$ are strictly decreasing on $[0,1]$, with $k_1(0),\cdots,k_3(0)>0$, and $k_1(1)<0$, whereas $k_2(1), k_3(1)>0$, such that in particular
$k_2, k_3$ are strictly positive on $[0,1].$
\end{lem}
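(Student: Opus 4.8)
The plan is to prove the three trigonometric identities for $k_1, k_2, k_3$ by term-by-term manipulation of the series for $g''$, and then to establish the monotonicity and sign properties by a careful analysis dominated by the leading ($m=0$) term.

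First I would establish the closed series forms. Starting from the series
\be
g''(x)=-4\pi^3\sum^{\infty}_{m=0}\Big(\frac{\kappa}{4}\Big)^{m}\sin \Big(\frac{\pi}{2^{m+1}}\Big)\cos \Big(\pi \frac{2x+1}{2^{m+1}}\Big),
\ee
I would substitute the two arguments into the cosine factor and use the product-to-sum identity $\cos A-\cos B=-2\sin\frac{A+B}{2}\sin\frac{A-B}{2}$. For $k_1$, the two arguments $\frac{x+1}{2}$ and $\frac{x}{2}$ give cosine phases $\pi\frac{2(x+1)/2+1}{2^{m+1}}=\pi\frac{x+2}{2^{m+1}}$ and $\pi\frac{x+1}{2^{m+1}}$; their half-sum yields the factor $\sin\big(\frac{\pi}{2^{m+2}}(3+2x)\big)$ after collecting, and their half-difference (which is $\frac{\pi}{2^{m+2}}$) yields the factor $\sin\big(\frac{\pi}{2^{m+2}}\big)$, with the sign from the $-4\pi^3$ prefactor absorbed to produce $+8\pi^3$. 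The computations for $k_2$ and $k_3$ are entirely analogous: substitute $\frac{x+1}{4}+\frac12,\frac{x}{4}+\frac12$ resp.~$\frac{x+1}{4},\frac{x}{4}$, apply the same product-to-sum formula, and verify that the phases match the stated $\sin\big(\frac{\pi}{2^{m+3}}\big)$ and $\sin\big(\frac{\pi}{2^{m+2}}(\cdots)\big)$ factors. I expect these to be routine but bookkeeping-heavy; I would present only the $k_1$ case in full and indicate that $k_2,k_3$ follow by the same substitution.

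Next I would address monotonicity. Since each $k_i$ has the form $8\pi^3\sum_m(\frac{\kappa}{4})^m c_m \sin(\alpha_m+\beta_m x)$ with small positive frequencies $\beta_m=\pi/2^{m+1}$ (or $\pi/2^{m+2}$), I would differentiate termwise to obtain $k_i'(x)=8\pi^3\sum_m(\frac{\kappa}{4})^m c_m \beta_m\cos(\alpha_m+\beta_m x)$. The phase arguments $\alpha_m+\beta_m x$ all lie in a range where cosine is negative over $[0,1]$: for $k_1$, the argument $\frac{\pi}{2^{m+2}}(3+2x)$ ranges over $[\frac{3\pi}{2^{m+2}},\frac{5\pi}{2^{m+2}}]$, and for $m=0$ this is $[\frac{3\pi}{4},\frac{5\pi}{4}]$ where cosine is negative, dominating the geometrically small higher terms. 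Thus $k_i'<0$ on $[0,1]$, giving strict decrease. The sign claims at the endpoints then reduce to evaluating the leading term and bounding the remainder by the geometric tail $8\pi^3\frac{(\kappa/4)^2}{1-\kappa/4}$, exactly as in the remainder estimates of Lemma \ref{l:positivity_gdoubleprime}; I would check that $k_1(0),k_2(0),k_3(0)>0$ and $k_2(1),k_3(1)>0$ while $k_1(1)<0$ numerically for $\kappa\le.6$.

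The main obstacle I anticipate is the sign of $k_1(1)<0$ combined with $k_1(0)>0$: unlike $k_2,k_3$, the function $k_1$ changes sign, so the dominance argument must be tight enough that the leading $m=0$ term $8\pi^3\sin(\frac{\pi}{2})\sin(\frac{\pi}{4})\sin(\frac{\pi}{4}(3+2x))=8\pi^3\cdot\frac{1}{\sqrt2}\sin(\frac{\pi}{4}(3+2x))$ genuinely controls the sign at both endpoints after subtracting the worst-case remainder. At $x=1$ the leading sine is $\sin(\frac{5\pi}{4})=-\frac{1}{\sqrt2}<0$, so I must verify that the positive geometric tail cannot overturn this; the uniform bound $\frac{\kappa}{4}\le.15$ makes the tail small, but the inequality must be written out explicitly. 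Since the positivity of $k_2,k_3$ on all of $[0,1]$ follows immediately from strict decrease together with $k_2(1),k_3(1)>0$, the delicate point is isolated entirely in the $k_1$ endpoint estimate, which I would close with the same style of remainder bound used throughout Section \ref{s:transversality}.
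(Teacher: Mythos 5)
Your proposal is correct and follows essentially the same route as the paper: product-to-sum identities to obtain the series forms, termwise differentiation with the $m=0$ term (whose phase stays in the range where cosine is negative) dominating the geometric tail to get strict decrease, and leading-term-plus-remainder estimates for the endpoint signs, with the only delicate case being $k_1(1)<0$. One minor remark: if you actually carry out the $k_3$ computation you will find the phase $\sin\big(\tfrac{\pi}{2^{m+3}}(5+2x)\big)$ rather than the stated $\sin\big(\tfrac{\pi}{2^{m+2}}(3+x)\big)$ --- an apparent typo in the lemma's displayed formula that does not affect any of the monotonicity or sign conclusions, since the $m=0$ phase still lies in $[\tfrac{\pi}{2},\tfrac{3\pi}{2}]$.
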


\pf
The claimed equations follow readily from trigonometric identities of the form $\sin y- \sin z = 2 \sin \big(\frac{y-z}{2}\big) \cos \big(\frac{y+z}{2}\big), y,z\in[0,1].$ To deduce the monotonicity properties, we approximate the functions by their first term, and note that the remainder is small enough to not perturb its monotonicity properties.
For monotonicity properties of the first term, we observe that $\cos$ is negative on $[\frac{\pi}{2}, \frac{3 \pi}{2}],$ and the intervals $[\frac{3\pi}{4}, \frac{5\pi}{4}], [\frac{9\pi}{8},\frac{11\pi}{8}], [\frac{3\pi}{4},\pi]$ are contained in $[\frac{\pi}{2}, \frac{3 \pi}{2}].$ The signs of their values at $0$ and $1$ are obtained as well by evaluating the first terms. See Figure \ref{Plotk1234}. \epf

\begin{figure}[ht]
\centering
\includegraphics[width=\textwidth]{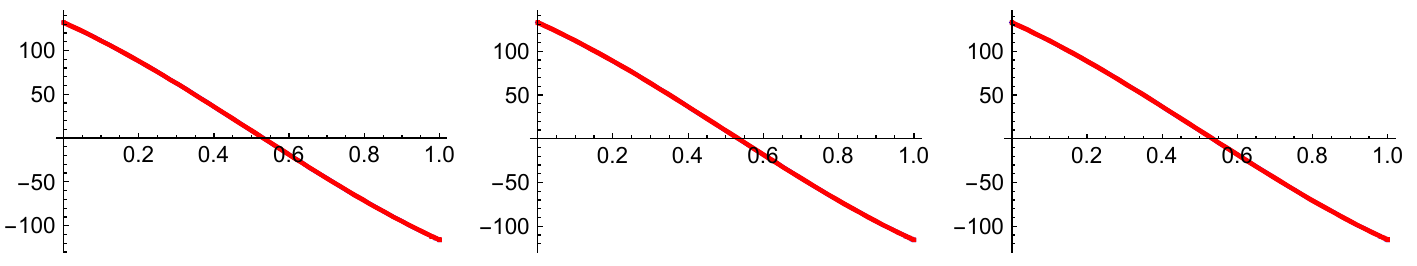}
\includegraphics[width=\textwidth]{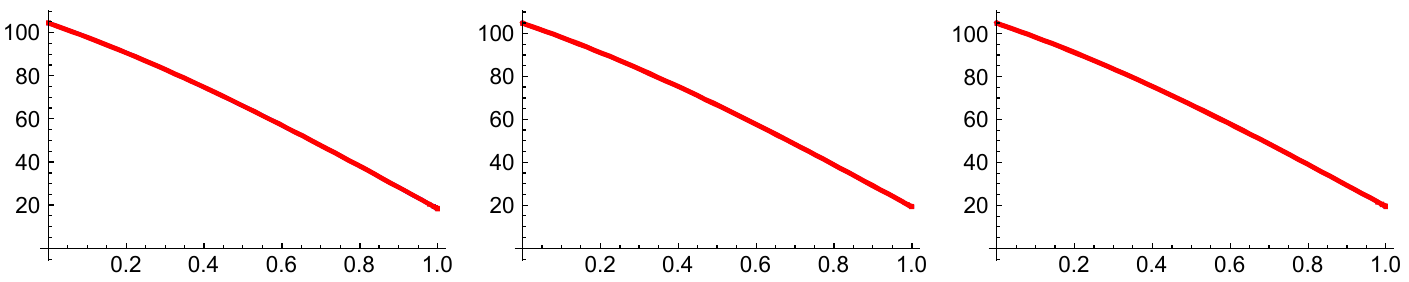}
\includegraphics[width=\textwidth]{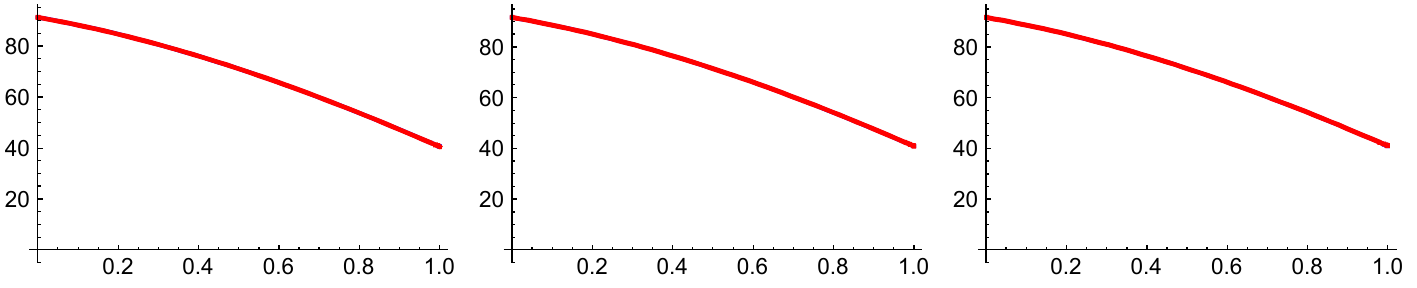}
\caption{Upper and lower approx.~of $k_1,k_2,k_3$ (top to bottom) for $\kappa=0.5, 0.55, 0.56$ (left to right). Properties according to Lemma \ref{l:monotonicity_differences}.}
\label{Plotk1234}
\end{figure}

A somewhat different case is treated in the following lemma.

\begin{lem}\label{l:monotonicity_differences2}
The function
$$
k_4(x):= g''\Big(\frac{x+1}{2}\Big)-\frac{\kappa}{4}g''\Big(\frac{x}{4}\Big),\quad x\in[0,1],
$$
is strictly decreasing and positive on $[0,.9]$.
\end{lem}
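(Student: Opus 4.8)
The plan is to reduce $k_4$ to a single series with an explicit leading term and then reuse the "first two terms plus a controlled remainder" scheme of Lemma \ref{l:monotonicity_differences}; the new feature is that the two pieces $g''(\frac{x+1}{2})$ and $\frac{\kappa}{4}g''(\frac{x}{4})$ sit on \emph{different} dyadic scales, so they align only after a reindexing. First I would insert \eqref{defgdpri1} into each piece. For the first, the argument of the cosine is $\pi\frac{x+2}{2^{m+1}}$, so
\[
g''\Big(\frac{x+1}{2}\Big)=-4\pi^3\sum_{m=0}^\infty\Big(\frac{\kappa}{4}\Big)^m\sin\Big(\frac{\pi}{2^{m+1}}\Big)\cos\Big(\pi\frac{x+2}{2^{m+1}}\Big);
\]
for the second the cosine argument is $\pi\frac{x+2}{2^{m+2}}$, and the prefactor $\frac{\kappa}{4}$ is exactly what is needed so that the substitution $j=m+1$ gives
\[
\frac{\kappa}{4}\,g''\Big(\frac{x}{4}\Big)=-4\pi^3\sum_{j=1}^\infty\Big(\frac{\kappa}{4}\Big)^{j}\sin\Big(\frac{\pi}{2^{j}}\Big)\cos\Big(\pi\frac{x+2}{2^{j+1}}\Big).
\]
Both series now carry the identical cosines $\cos(\pi\frac{x+2}{2^{m+1}})$, so subtracting telescopes the scale factors and leaves only the isolated $m=0$ term. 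Using $\cos(\pi\frac{x+2}{2})=-\cos(\frac{\pi x}{2})$ this produces
\[
k_4(x)=4\pi^3\cos\Big(\frac{\pi x}{2}\Big)+4\pi^3\sum_{m=1}^\infty\Big(\frac{\kappa}{4}\Big)^m\Big[\sin\Big(\frac{\pi}{2^{m}}\Big)-\sin\Big(\frac{\pi}{2^{m+1}}\Big)\Big]\cos\Big(\pi\frac{x+2}{2^{m+1}}\Big),
\]
where every coefficient $c_m:=\sin(\frac{\pi}{2^{m}})-\sin(\frac{\pi}{2^{m+1}})$ is positive and bounded by $\frac{\pi}{2^m}$.

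Next I would isolate the first two terms. The $m=1$ term is explicit, since $c_1=1-\frac{\sqrt2}{2}$ and $\cos(\pi\frac{x+2}{4})=-\sin(\frac{\pi x}{4})$, so I set
\[
h(x):=4\pi^3\Big[\cos\Big(\frac{\pi x}{2}\Big)-\frac{\kappa}{4}\Big(1-\frac{\sqrt2}{2}\Big)\sin\Big(\frac{\pi x}{4}\Big)\Big],
\]
and collect the tail into $R(\kappa,x)=k_4(x)-h(x)$. The bound $c_m\le\frac{\pi}{2^m}$ then yields the geometric estimates $|R(\kappa,x)|\le\frac{\pi^4\kappa^2}{2(8-\kappa)}$ and, differentiating term by term, $|R'(\kappa,x)|\le 2\pi^5\sum_{m\ge2}(\frac{\kappa}{16})^m$, both small for $\kappa\le .6$ (the range in force in this section). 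On $[0,.9]$ one has $\frac{\pi x}{2}\in[0,\frac{\pi}{2}]$ and $\frac{\pi x}{4}\in[0,.225\pi]$, so
\[
h'(x)=4\pi^3\Big[-\frac{\pi}{2}\sin\Big(\frac{\pi x}{2}\Big)-\frac{\pi\kappa}{16}\Big(1-\frac{\sqrt2}{2}\Big)\cos\Big(\frac{\pi x}{4}\Big)\Big]
\]
is a sum of two nonpositive terms, the second bounded above by $-\frac{\pi\kappa}{16}(1-\frac{\sqrt2}{2})\cos(.225\pi)$; hence $h$ is strictly decreasing and $|h'|$ is bounded below by a multiple of $\kappa$ uniformly in $x$. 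Comparing $|h'|\gtrsim\kappa$ with $|R'|=O(\kappa^2)$ gives $k_4'=h'+R'<0$, and comparing the minimum value $h(.9)$ (a positive constant for $\kappa\le.6$) with $|R|$ gives $k_4=h+R>0$; both comparisons are the same routine numerical evaluations as in the neighbouring lemmas, to be accompanied by the corresponding plot.

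The one genuinely delicate point — and the reason the conclusion is stated on $[0,.9]$ rather than $[0,1]$ — is that the leading term $4\pi^3\cos(\frac{\pi x}{2})$ degenerates to $0$ as $x\to1$. Near the right end positivity is simply false, and already at $x=.9$ a crude bound on the \emph{entire} correction series would exceed the leading term, so it is essential to retain the $m=1$ contribution explicitly (it is the dominant, negative correction $-4\pi^3\frac{\kappa}{4}(1-\frac{\sqrt2}{2})\sin(\frac{\pi x}{4})$) and to absorb only the $m\ge2$ tail into $R$. For monotonicity the tight spot is instead at $x=0$, where the first summand of $h'$ vanishes and $|h'(0)|$ is only of order $\kappa$; there one must verify that the $O(\kappa^2)$ bound on $R'$ cannot reverse the sign, which is exactly where the explicit constant $1-\frac{\sqrt2}{2}$ and the restriction $\kappa\le.6$ enter.
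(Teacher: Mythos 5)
Your proof is correct and follows essentially the same route as the paper's: both reduce $k_4$ to the explicit leading part $4\pi^3\big[\cos\big(\tfrac{\pi x}{2}\big)-\tfrac{\kappa}{4}\big(1-\tfrac{\sqrt{2}}{2}\big)\sin\big(\tfrac{\pi x}{4}\big)\big]$ plus an $O(\kappa^2)$ remainder, prove that this part is strictly decreasing (the paper via the substitution $u=\sin\big(\tfrac{\pi x}{4}\big)$ and the sign of $f'(u)$, you by differentiating directly), and conclude positivity by evaluating at $x=0.9$. Your write-up is in fact slightly more complete, since the telescoping reindexation gives the remainder in closed form and you also bound $R'$ explicitly, a step the paper leaves implicit when passing from monotonicity of the approximation to monotonicity of $k_4$ itself.
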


\pf
We estimate by taking the first two terms in the series expansion of $g''(\frac{x+1}{2})$ and the first term of $g''(\frac{\kappa}{4})$. We obtain
\begin{align*}
k_4(x)
=&-4\pi^3\Big( \cos\Big(\frac{\pi}{2}(2+x)\Big)-\frac{\kappa}{4}(1-\frac{\sqrt{2}}{2})\cos\Big(\frac{\pi}{4}(2+x)\Big)\Big)
+O\Big(\big(\frac{\kappa}{4}\big)^2\Big)
\\
=&-4\pi^3\Big(-\cos\Big(\frac{x\pi}{2}\Big)+\frac{\kappa}{4}(1-\frac{\sqrt{2}}{2})\sin\Big(\frac{x\pi}{4}\Big)\Big)
+O\Big(\big(\frac{\kappa}{4}\big)^2\Big)
\\
=&-4\pi^3\Big(-\cos^2\Big(\frac{x\pi}{4}\Big)+\sin^2\Big(\frac{x\pi}{4}\Big)+\frac{\kappa}{4}(1-\frac{\sqrt{2}}{2})\sin\Big(\frac{x\pi}{4}\Big)\Big)
+O\Big(\big(\frac{\kappa}{4}\big)^2\Big)
\end{align*}
Let $u=\sin\big(\frac{x\pi}{4}\big), 0\leq u\leq \frac{\sqrt{2}}{2}$. We have to discuss the sign of
$$
f(u)=1-2u^2-\frac{\kappa}{8}(2-\sqrt{2})u.
$$
But
$$
f'(u)=-4u-\frac{\kappa}{8}(2-\sqrt{2})<0.
$$
Hence $f$ in $u$ and thus the approximation of $k_4$ in $x$ is decreasing. It remains to take the value at $x=0.9$ to see that $k_4$ is positive on $[0,.9]$. See Figure \ref{Plotk5}. \epf

\begin{figure}[ht]
\centering
\includegraphics[width=\textwidth]{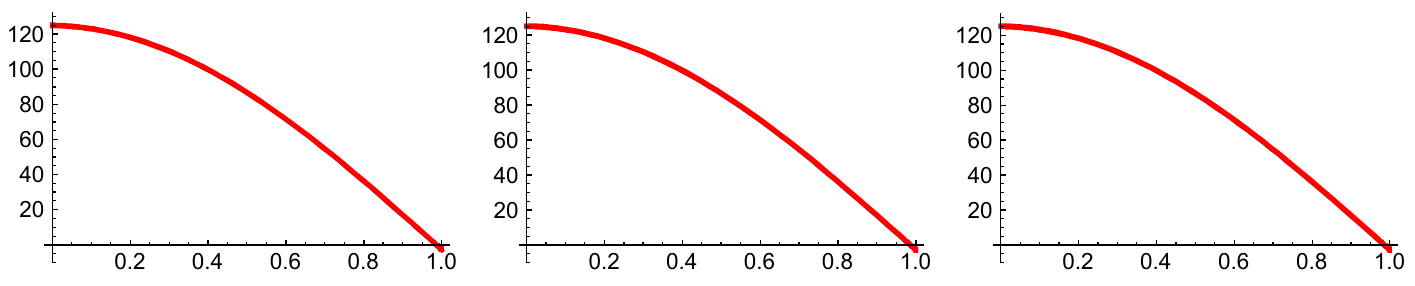}
\caption{Upper and lower approx.~of $k_4$ for $\kappa=0.5, 0.55, 0.56$ (left to right). Properties according to Lemma \ref{l:monotonicity_differences2}.}
\label{Plotk5}
\end{figure}

Next we prove the main observation leading to the convexity of $S(\xi,\cdot)-S(\eta,\cdot)$ on $[.1,.9]$.

\begin{lem} \label{lemposSdpri}For $\kappa \in [\eh,\kappa_0]$, we have
$$
S''(\xi,\cdot)-S''(\eta,\cdot)>0
$$
for $x\in [0.1,0.9]$.	
\end{lem}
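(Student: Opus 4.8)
The plan is to argue from the three-term representation (\ref{eqrepSDprime}), which writes $\tfrac1\kappa\big(S''(\xi,x)-S''(\eta,x)\big)$ as the dominant value $g''(x)$, two correction pairs of $g''$ evaluated at dyadically contracted and shifted arguments, and remainders $O\big((\tfrac\kappa4)^{\tau_4}\big)+O\big((\tfrac\kappa4)^{\sigma_3}\big)$. Working (as in Corollary \ref{c:representation_S2}) on $\{\eh<\xi-\eta\}$ with non-dyadic $\xi,\eta$, so that $\tau_1=0$ and $\sigma_1\ge\tau_2$, I would note first that on $[.1,.9]\subset[x_0,1]$ the lead $g''(x)$ is strictly positive and bounded below, by Lemma \ref{l:positivity_gdoubleprime}. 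The whole lemma thus reduces to showing that the corrections cannot cancel this positive lead, and I would organize this according to the ten configurations of $(\tau_2,\tau_3,\sigma_1,\sigma_2)$ of Table \ref{table:Cases}, in each of which the correction coefficients are explicit powers of $\tfrac\kappa4$.

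The mechanism is that a positive $\xi$-term and a negative $\eta$-term sharing the same power of $\tfrac\kappa4$ (i.e.~$\sigma_1=\tau_2$) combine exactly into one of the auxiliary functions of Lemmas \ref{l:monotonicity_differences} and \ref{l:monotonicity_differences2}: $\tau_2=\sigma_1=2$ yields $(\tfrac\kappa4)^2k_3(x)$ (Case 3), and $\tau_2=1$ with $\sigma_1=2$ yields $\tfrac\kappa4 k_4(x)$ (Cases 5 and 7); since $k_3>0$ on $[0,1]$ and $k_4>0$ on $[0,.9]$, these cases are immediate. In Cases 1, 2, 4, 6, where $\sigma_1>\tau_2$, there is no exact cancellation: the $\xi$-term of lowest order, $(\tfrac\kappa4)^{\tau_2}g''\big(\tfrac{x+1}{2^{\tau_2}}\big)$ with $\tau_2\in\{1,2\}$ in Cases 2, 4, 6, is positive because its argument exceeds $x_0$ and reinforces the lead, while every remaining correction (together with the remainders) carries a power $(\tfrac\kappa4)^{\ge3}$ and is absorbed by bounding it crudely by a geometric multiple of $\|g''\|_\infty$. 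I would dispatch these seven cases first.

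The delicate configurations are 8, 9 and 10, all with $\sigma_1=\tau_2=1$, so that the first correction pair collapses to $\tfrac\kappa4 k_1(x)$; and $k_1$ is not sign-definite — by Lemma \ref{l:monotonicity_differences} it decreases from $k_1(0)>0$ to $k_1(1)<0$ and so is negative on the upper part of $[.1,.9]$. The target inequality becomes $g''(x)+\tfrac\kappa4 k_1(x)+(\text{further terms})>0$, where the further terms are merely $O\big((\tfrac\kappa4)^{3}\big)$ remainders in Case 8, a positive standalone $(\tfrac\kappa4)^2 g''$-value outweighing an $O\big((\tfrac\kappa4)^3\big)$ piece in Case 9, and a positive $(\tfrac\kappa4)^2k_2(x)$ (Lemma \ref{l:monotonicity_differences}) in Case 10. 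Hence Case 8 is binding: once $g''(x)+\tfrac\kappa4 k_1(x)$ is shown to dominate the small remainders there, Cases 9 and 10 follow a fortiori, as they only add a net positive lower-order term. The main obstacle is exactly this scalar estimate near the right endpoint $x=.9$, where $k_1$ is most negative while $g''$ has already fallen from its midpoint maximum — which is also why $.9$ emerges as the natural boundary, mirroring the range $[0,.9]$ on which $k_4$ is positive.

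To close the binding estimate I would approximate $g''$ and $k_1$ by their first series terms, $g''(x)\approx 4\pi^3\sin(\pi x)$ and $\tfrac\kappa4 k_1(x)\approx \sqrt2\,\pi^3\kappa\,\sin\big(\tfrac\pi4(3+2x)\big)$, controlling the tails exactly as in Lemmas \ref{l:positivity_gdoubleprime}--\ref{l:monotonicity_differences2} through $\sin\theta\le\theta$ and the monotonicity of $\cos$, which turns them into clean geometric series in $\tfrac\kappa4$. Since the resulting lower envelope is decreasing near the right end, it suffices to evaluate at $x=.9$ with $\kappa=\kappa_0\approx.56$ (which maximizes the destabilizing factor $\tfrac\kappa4$) and check, numerically as elsewhere in the section, that the bound stays strictly above $0$ uniformly on $[.1,.9]$; this settles Case 8 and hence the lemma. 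The complementary intervals $[0,.1]$ and $[.9,1]$ fall outside this statement and are handled by the monotonicity analysis of Lemma \ref{lemposSdpri2}.
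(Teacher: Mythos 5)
Your proposal follows essentially the same route as the paper's proof: the same case-by-case analysis over the ten configurations of Table \ref{table:Cases}, the same reduction of the sign-definite cases to the positivity of $g''$ on $[x_0,1]$ and of $k_2,k_3,k_4$, and the same identification of Cases 8--10 (where $k_1$ changes sign) as the binding ones, settled by a monotonicity/first-term argument plus a numerical evaluation at $x=.9$. The only cosmetic differences --- grouping the first $\eta$-term of Case 7 with $k_4$ rather than with $k_2$, and deducing Case 10 a fortiori from Case 8 via $k_2>0$ instead of the paper's separate decreasing-combination check --- are equally valid.
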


\begin{proof}
We argue separately for the ten cases of Table \ref{table:Cases}, see also Figure \ref{PlotS''}.
	
\textbf{Case 1}:
$$
S''(\xi,x)-S''(\eta,x)=\kappa\Big(g''(x)+O\Big(\big(\frac{\kappa}{4}\big)^3\Big)\Big)>0 \text{ for } x\geq .1
$$
follows immediately from Lemma \ref{l:positivity_gdoubleprime}.

\textbf{Case 2}:
$$
S''(\xi,x)-S''(\eta,x)
=\kappa \Big(g''(x)+\big(\frac{\kappa}{4}\big)^2g''\big(\frac{x+1}{4}\big)+O\Big(\big(\frac{\kappa}{4}\big)^3\Big)\Big)>0 \text{ for } x\geq x_0
$$
where $x_0$ is given by Lemma \ref{l:positivity_gdoubleprime}. This follows from this Lemma and the fact that $g''(\frac{x+1}{4})>0$ for all $x\in[0,1]$.

\textbf{Case 3}:
$$
S''(\xi,x)-S''(\eta,x)
=\kappa\Big(g''(x)+\big(\frac{\kappa}{4}\big)^2\Big(g''\big(\frac{x+1}{4}\big)-g''\big(\frac{x}{4}\big)\Big)+O\Big(\big(\frac{\kappa}{4}\big)^3\Big)\Big)>0 \text{ for } x\geq x_0 .
$$
This follows from Lemma \ref{l:positivity_gdoubleprime} and the positivity of $k_3$ stated in Lemma \ref{l:monotonicity_differences}.

\textbf{Case 4}:
$$
S''(\xi,x)-S''(\eta,x)
=\kappa\Big(g''(x)+\frac{\kappa}{4}g''\big(\frac{x+1}{2}\big)+O\Big(\big(\frac{\kappa}{4}\big)^3\Big)\Big)>0 \text{ for } x\geq x_0 .
$$
This follows from the fact that $g''(\frac{x+1}{2})>0$ for $x\in[0,1]$ which is a consequence of Lemma \ref{l:positivity_gdoubleprime}.

\textbf{Case 5}:
$$
S''(\xi,x)-S''(\eta,x)
=\kappa\Big(g''(x)+\frac{\kappa}{4}g''\big(\frac{x+1}{2}\big)-\big(\frac{\kappa}{4}\big)^2g''\big(\frac{x}{4}\big)
+O\Big(\big(\frac{\kappa}{4}\big)^3\Big)\Big)>0 \text{ for } x\in[x_0,.9] .
$$
This follows from Lemma \ref{l:positivity_gdoubleprime} and Lemma \ref{l:monotonicity_differences2}.

\textbf{Case 6}:
$$
S''(\xi,x)-S''(\eta,x)=\kappa\Big(g''(x)+\frac{\kappa}{4}g''(\frac{x+1}{2})+(\frac{\kappa}{4})^2g''(\frac{x+1}{4}+\frac{1}{2})+O\Big(\big(\frac{\kappa}{4}\big)^3\Big)\Big)>0 \text{ for } x\geq x_0 .
$$
This follows from the positivity of $\frac{\kappa}{4}g''(\frac{x+1}{2})+(\frac{\kappa}{4})^2g''(\frac{x+1}{4}+\frac{1}{2})$ for all $x\in[0,1]$ which is a consequence of Lemma \ref{l:positivity_gdoubleprime}.

\textbf{Case 7}:
$$
S''(\xi,x)-S''(\eta,x)
=\kappa\Big(g''(x)
           +\frac{\kappa}{4}g''\big(\frac{x+1}{2}\big)
					 +\big(\frac{\kappa}{4}\big)^2\Big(g''\big(\frac{x+1}{4}+\frac{1}{2}\big)-g''\big(\frac{x}{4}+\frac{1}{2}\big)\Big)+O\Big(\big(\frac{\kappa}{4}\big)^3\Big)\Big)>0 \text{ for } x\geq x_0 .
$$
First note that $\frac{\kappa}{4}g''(\frac{x+1}{2})+(\frac{\kappa}{4})^2(g''(\frac{x+1}{4}+\frac{1}{2})-g''(\frac{x}{4}+\frac{1}{2}))>0$ for $x\in[0,1]$ due to Lemmas \ref{l:positivity_gdoubleprime} and \ref{l:monotonicity_differences}. An appeal to Lemma \ref{l:positivity_gdoubleprime} again proves positivity on $[x_0,1]$.

\textbf{Case 8}:
$$
S''(\xi,x)-S''(\eta,x)=\kappa\Big(g''(x)+\frac{\kappa}{4}\Big(g''\big(\frac{x+1}{2}\big)-g''\big(\frac{x}{2}\big)\Big)+O\Big(\big(\frac{\kappa}{4}\big)^3\Big)\Big)>0
$$
for $x\in [0.1,0.9]$. By Lemma \ref{l:monotonicity_differences}, we know that $k_1$ is decreasing, with $k_1(0)>0,$ and $k_1(1)<0$. It remains to calculate $k_1(.9)$ and remark that $g''(.9)+\frac{\kappa}{4} g''(.9)+(\frac{\kappa}{4})^2 k_1(.9)>0$.

\textbf{Case 9}:

$$
S''(\xi,x)-S''(\eta,x)
=
\kappa\Big(g''(x)+\frac{\kappa}{4}\Big(g''\big(\frac{x+1}{2}\big)-g''\big(\frac{x}{2}\big)\Big)+\big(\frac{\kappa}{4}\big)^2g''\big(\frac{x+1}{4}+\frac{1}{2}\big)+O\Big(\big(\frac{\kappa}{4}\big)^3\Big)\Big)>0
$$
for $x\in [0.1,0.9]$.
This follows from \textbf{Case 8} and the fact that $g''(\frac{x+1}{4}+\frac{1}{2})>0$ for $x\in [0.1,1]$.

\textbf{Case 10}:
\begin{align*}
S''(\xi,x)-S''(\eta,x)
&
=\kappa\Big(g''(x)
+\frac{\kappa}{4}\Big(g''\big(\frac{x+1}{2}\big)- g''\big(\frac{x}{2}\big)\Big)
\\
&\qquad \quad
+\big(\frac{\kappa}{4}\big)^2\Big(g''\big(\frac{x+1}{4}+\frac{1}{2}\big)-g''\big(\frac{x}{4}+\frac{1}{2}\big)\Big)+O\Big(\big(\frac{\kappa}{4}\big)^3\Big)\Big)>0
\end{align*}
for $x\in [x_0,0.9]$.
First observe that from Lemma \ref{l:monotonicity_differences}, we can deduce that $\frac{\kappa}{4}k_1(x)+(\frac{\kappa}{4})^2k_2(x)$ is decreasing in $x$, starting at $x=0$ with a positive value. Now an appeal to Lemma \ref{l:positivity_gdoubleprime} and a numerical verification of
$g''(.9)+\frac{\kappa}{4}k_1(.9)+(\frac{\kappa}{4})^2k_2(.9)>0$ show positivity on $[x_0,.9]$.
\end{proof}

\begin{figure}[ht]
\centering
\includegraphics[width=\textwidth]{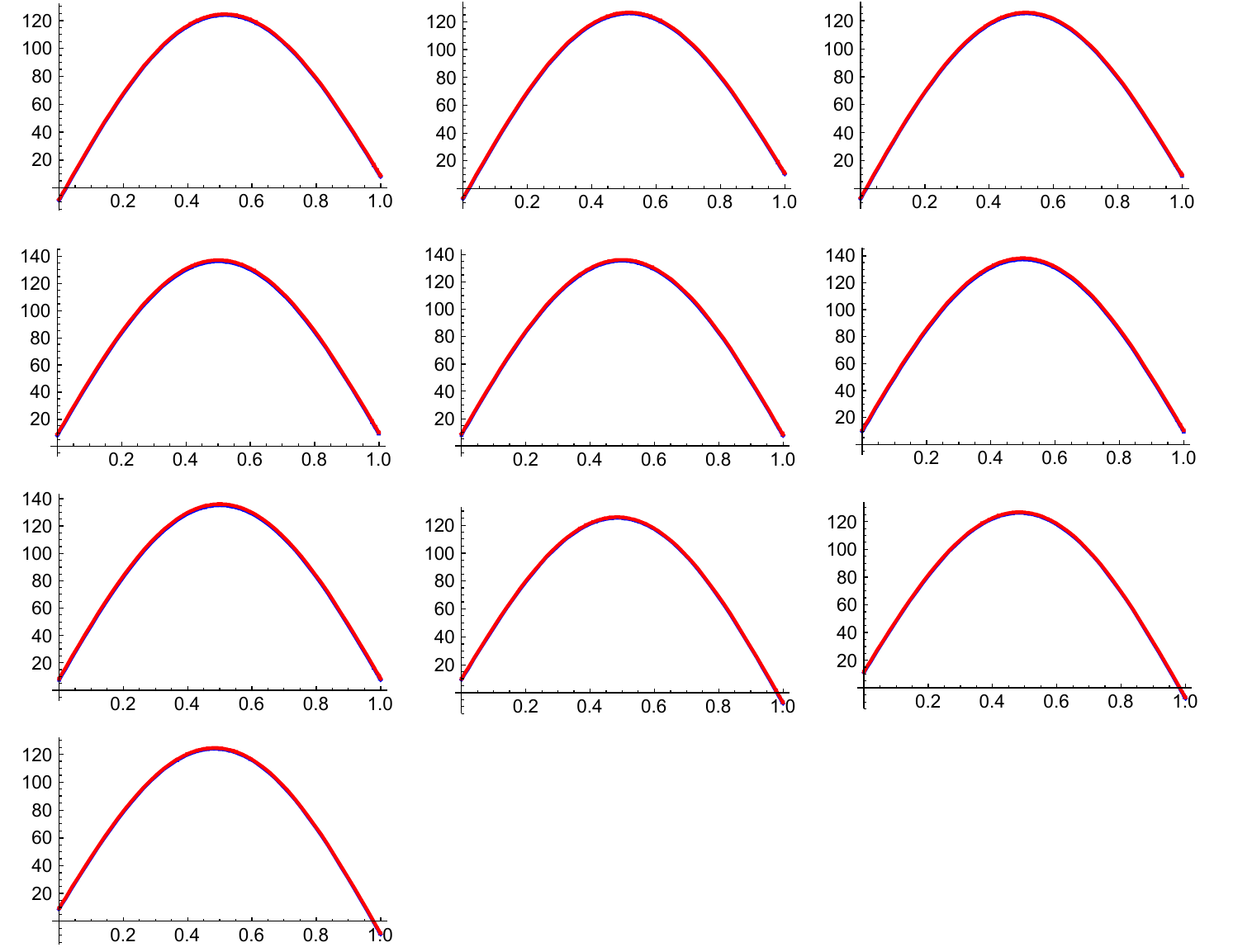}
\caption{Upper and lower approx.~of $S''(\xi,\cdot)-S''(\eta,\cdot)$ for $x\in[0,1]$ and $\kappa=0.55$ in ten cases of Table \ref{table:Cases} (Case 1 on top left, continuing in the usual fashion left to right, top to bottom). The maps are positive for $x\in[.1, .9].$}
\label{PlotS''}
\end{figure}

We shall now, again following the idea that the geometric behaviour of $S(\xi,\cdot)-S(\eta,\cdot)$ is just an individual perturbation of the one of $g$, show that $S'(\xi,\cdot)-S'(\eta,\cdot)$ is negative on $[0,.1]$ and positive on $[.9,1]$. This will allow us to conclude that $S(\xi,\cdot)-S(\eta,\cdot)$ possesses a unique local minimum. Our arguments will be similar to the ones above confirming the positivity of $S''(\xi,\cdot)-S''(\eta,\cdot)$, but simpler, starting with the representation of $S'(\xi,\cdot)-S'(\eta,\cdot)$ by equation \eqref{e:eqrepSprime}. As before, we shall start with an analysis of the relevant increments of $g'$ at different arguments.

\begin{lem}\label{l:sign_gprime}
We have for $x\in[0,1]$
\begin{align*}
l_1(x):=g'(\frac{x+1}{2})-g'(\frac{x}{2})
=& -8\pi^2\sum^{\infty}_{m=0}\Big(\frac{\kappa}{2}\Big)^{m}
\sin \Big( \frac{\pi}{2^{m+1}}\Big)\sin \Big( \frac{\pi}{2^{m+2}}\Big)\cos \Big( \frac{\pi}{2^{m+2}}(3+2x)\Big),
\\
l_2(x):=g'(\frac{x+1}{4}+\frac{1}{2})-g'(\frac{x}{4}+\frac{1}{2})
=& -8\pi^2\sum^{\infty}_{m=0}\Big(\frac{\kappa}{2}\Big)^{m}
\sin \Big( \frac{\pi}{2^{m+1}}\Big)\sin \Big( \frac{\pi}{2^{m+3}}\Big)\cos \Big( \frac{\pi}{2^{m+2}}(\frac{9}{2}+x)\Big),
\\
l_3(x):=g'(\frac{x+1}{4})-g'(\frac{x}{4})
=&-8\pi^2\sum^{\infty}_{m=0}\Big(\frac{\kappa}{2}\Big)^{m}
\sin \Big( \frac{\pi}{2^{m+1}}\Big)\sin \Big( \frac{\pi}{2^{m+3}}\Big)
\cos \Big( \frac{\pi}{2^{m+2}}(3+x)\Big).
\end{align*}
Moreover, $l_1,\cdots,l_3$ are strictly positive on $[0,1]$.
\end{lem}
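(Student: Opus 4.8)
The plan is to establish the two assertions of the lemma in turn: first the three closed-form series for $l_1,l_2,l_3$, then their strict positivity on $[0,1]$.

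For the identities I would start from the representation \eqref{defgpri1}, namely $g'(p)=-4\pi^2\sum_{m=0}^\infty(\frac{\kappa}{2})^m\sin(\frac{\pi}{2^{m+1}})\sin(\pi\frac{2p+1}{2^{m+1}})$, and evaluate it at the two shifted arguments defining each $l_i$, subtracting term by term. The factor $(\frac{\kappa}{2})^m\sin(\frac{\pi}{2^{m+1}})$ is common to both evaluations, so the $m$-th summand reduces to a difference $\sin A_m-\sin B_m$ of two sines. Applying $\sin A-\sin B=2\sin(\frac{A-B}{2})\cos(\frac{A+B}{2})$, the half-difference $\frac{A_m-B_m}{2}$ is independent of $x$ and equals $\frac{\pi}{2^{m+2}}$ in the case of $l_1$ and $\frac{\pi}{2^{m+3}}$ in the cases of $l_2,l_3$, producing the extra sine factor, while the half-sum $\frac{A_m+B_m}{2}$ supplies the cosine argument; the factor $2$ combines with $-4\pi^2$ to give the announced $-8\pi^2$. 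This is the exact analogue of the derivation of the $k_i$ in Lemma \ref{l:monotonicity_differences}, the only structural change being that differencing the sine series $g'$ leaves a cosine in each summand, whereas differencing the cosine series $g''$ had left a sine.

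For the positivity the decisive remark is a sign analysis of the leading term. For every $m\ge0$ the arguments of $\sin(\frac{\pi}{2^{m+1}})$, $\sin(\frac{\pi}{2^{m+2}})$ and $\sin(\frac{\pi}{2^{m+3}})$ lie in $(0,\frac{\pi}{2}]$, so these factors are strictly positive; hence the sign of the $m$-th summand of each $l_i$ is that of $-\cos$ of its argument. The key point is that for the leading term $m=0$ this argument, as $x$ ranges over $[0,1]$, stays inside $(\frac{\pi}{2},\frac{3\pi}{2})$, where $\cos<0$: for $l_1$ it sweeps $[\frac{3\pi}{4},\frac{5\pi}{4}]$, for $l_2$ it sweeps $[\frac{9\pi}{8},\frac{11\pi}{8}]$, and for $l_3$ it sweeps an interval contained in $(\frac{\pi}{2},\pi)$. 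Thus the $m=0$ term of each $l_i$ is strictly positive throughout $[0,1]$, and bounded below away from $0$ by compactness of the cosine interval inside the open interval of negativity. This \emph{uniform} negativity of the cosine is exactly what makes $l_1$ positive on all of $[0,1]$, in contrast to $k_1$, whose sign was carried by a sine that vanishes and changes sign near $x=1$.

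It then remains to show that the positive $m=0$ term dominates the tail $\sum_{m\ge1}$ in absolute value, uniformly in $x\in[0,1]$ and in $\kappa$ over the admissible range, and this tail-versus-leading-term comparison is the only step that is not purely mechanical. Bounding $|\cos|\le1$ and $\sin(\frac{\pi}{2^{m+j}})\le\frac{\pi}{2^{m+j}}$, each tail summand is at most a constant times $(\frac{\kappa}{2})^m2^{-2m}=(\frac{\kappa}{8})^m$, so the whole tail is controlled by a convergent geometric series summing to $O(\frac{\kappa}{8-\kappa})$, which for $\kappa\le\kappa_0$ is comfortably smaller than the explicit lower bound on the $m=0$ term. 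I expect the margin here to be ample, so that the first-term approximation settles the sign, just as in Lemma \ref{l:monotonicity_differences} and as illustrated in the accompanying figure; the remaining verifications are the routine sum-to-product bookkeeping described above.
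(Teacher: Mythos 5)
Your proposal is correct and follows essentially the same route as the paper: the identities come from the sum-to-product formula applied termwise to \eqref{defgpri1}, and positivity follows from the $m=0$ term (whose cosine argument stays strictly inside $(\tfrac{\pi}{2},\tfrac{3\pi}{2})$, where $\cos<0$) dominating a geometrically small tail of order $\kappa/(8-\kappa)$. One small point: carrying out your half-sum computation for $l_3$ actually yields $\cos\big(\tfrac{\pi}{2^{m+2}}(x+\tfrac52)\big)$ rather than the $\cos\big(\tfrac{\pi}{2^{m+2}}(3+x)\big)$ printed in the lemma (so for $m=0$ the argument sweeps $[\tfrac{5\pi}{8},\tfrac{7\pi}{8}]$ rather than $[\tfrac{3\pi}{4},\pi]$), an apparent typo in the statement that does not affect the positivity conclusion since this interval is still compactly contained in $(\tfrac{\pi}{2},\tfrac{3\pi}{2})$.
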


\pf
As before, the claimed equations follow readily from trigonometric identities. We approximate the functions by their first term, and note that the remainder is small enough to not perturb its positivity on $[0,1].$
To obtain the latter, observe that $\cos$ is negative on $[\frac{\pi}{2}, \frac{3 \pi}{2}],$ and the intervals $[\frac{3\pi}{4}, \frac{5\pi}{4}], [\frac{9\pi}{8},\frac{11\pi}{8}], [\frac{3\pi}{4},\pi]$ are contained in $[\frac{\pi}{2}, \frac{3 \pi}{2}]$. See Figure \ref{Plotl1234}.  \epf

\begin{figure}[ht]
\centering
\includegraphics[width=\textwidth]{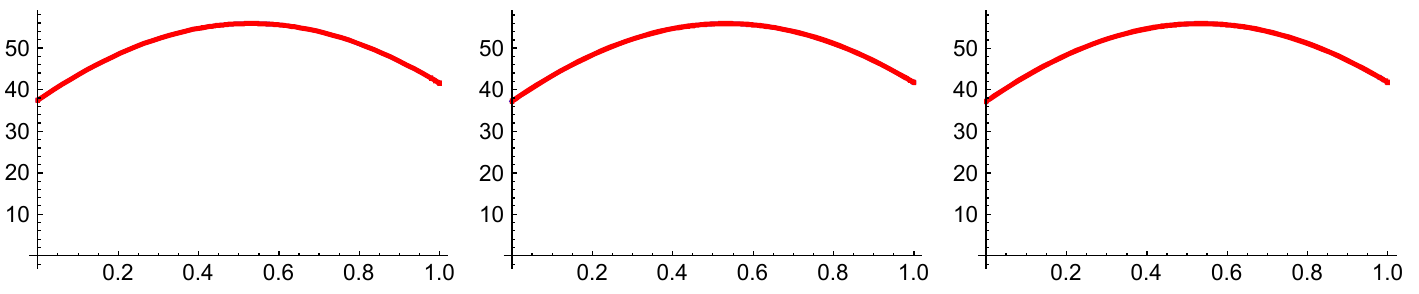}
\includegraphics[width=\textwidth]{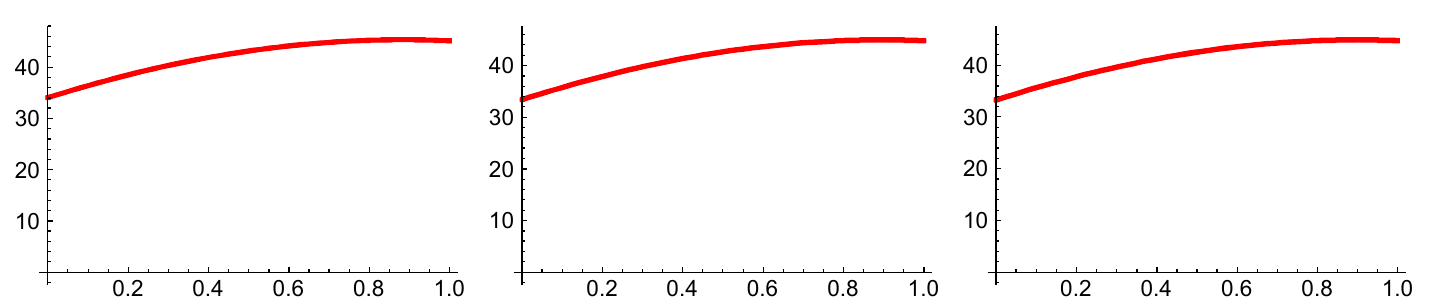}
\includegraphics[width=\textwidth]{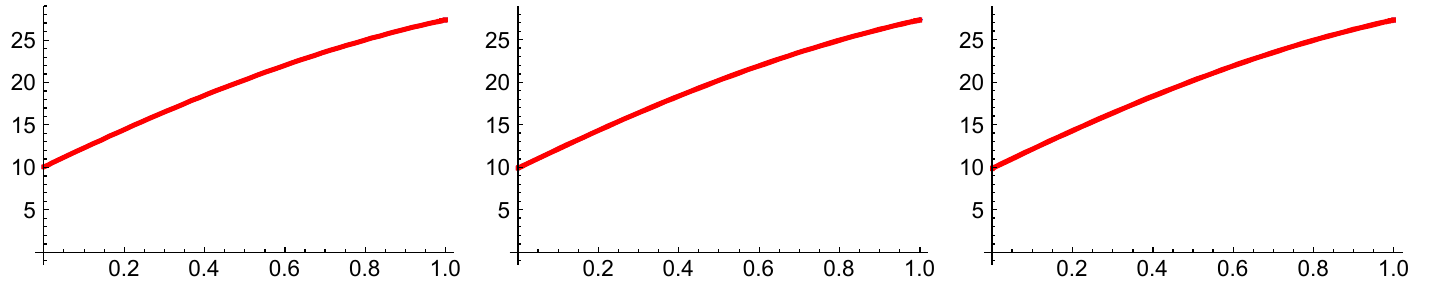}
\caption{Upper and lower approx.~of graph of $l_1,l_2,l_3$ (from top to bottom) for $\kappa=0.5, 0.55, 0.56$ (left to right). The functions are strictly positive on $[0,1]$ as per Lemma \ref{l:sign_gprime}.}
\label{Plotl1234}
\end{figure}

A somewhat different case is treated in the following lemma.

\begin{lem}\label{l:sign_gprime2}
The function
$$
l_4(x):= g'\Big(\frac{x+1}{2}\Big)-\frac{\kappa}{2}g'\Big(\frac{x}{4}\Big), \quad x\in[0,1],
$$
is strictly positive on $[0,1]$.
\end{lem}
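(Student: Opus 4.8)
The plan is to follow the blueprint of Lemma \ref{l:monotonicity_differences2}, which handled the second-derivative analogue $k_4$: expand both occurrences of $g'$ through the series representation \eqref{defgpri1}, retain the leading term(s) of each, simplify via elementary trigonometric identities, and control the remaining tail as a geometric series. Using \eqref{defgpri1} at the two arguments $\frac{x+1}{2}$ and $\frac{x}{4}$, and repeatedly invoking $\sin(\theta+\pi)=-\sin\theta$ and $\sin(\theta+\frac{\pi}{2})=\cos\theta$, the $m=0$ contribution of $g'(\frac{x+1}{2})$ is $4\pi^2\sin(\frac{\pi x}{2})$, its $m=1$ contribution is $-\pi^2\kappa\sqrt2\cos(\frac{\pi x}{4})$, while the $m=0$ contribution of $g'(\frac{x}{4})$ is $-4\pi^2\cos(\frac{\pi x}{4})$.

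Collecting these leading terms according to $l_4=g'(\frac{x+1}{2})-\frac{\kappa}{2}g'(\frac{x}{4})$, and using $\sin(\frac{\pi x}{2})=2\sin(\frac{\pi x}{4})\cos(\frac{\pi x}{4})$, the approximation of $l_4$ reduces to
$$l_4(x)\approx \pi^2\cos\Big(\frac{\pi x}{4}\Big)\Big[\,8\sin\Big(\frac{\pi x}{4}\Big)+\kappa(2-\sqrt2)\,\Big].$$
This expression is manifestly strictly positive on $[0,1]$: one has $\cos(\frac{\pi x}{4})\ge\cos(\frac{\pi}{4})=\frac{\sqrt2}{2}>0$, while the bracket is a sum of a nonnegative term and $\kappa(2-\sqrt2)>0$, so the leading part is bounded below by $\pi^2\kappa(\sqrt2-1)$. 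In particular, in contrast to the case of $k_4$, no monotonicity argument is required here, since positivity is immediate once the expression is written as a product of positive factors; substituting $u=\sin(\frac{\pi x}{4})\in[0,\frac{\sqrt2}{2}]$ makes this entirely explicit, paralleling the substitution used for $k_4$.

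The only delicate point is to verify that the discarded tail does not destroy positivity. Bounding the dropped terms by $\sin(\frac{\pi}{2^{m+1}})\le\frac{\pi}{2^{m+1}}$ and $|\sin(\cdot)|\le1$ produces a remainder dominated by a geometric series in $\kappa/4$. I expect this to be the main obstacle, and the reason this lemma is more subtle than its $g''$-counterparts: the series \eqref{defgpri1} carries weights $(\kappa/2)^m$ rather than $(\kappa/4)^m$, so the tail decays more slowly and the crude bound is not comfortably dominated by the leading lower estimate $\pi^2\kappa(\sqrt2-1)$. To close this gap I would either retain also the $m=1$ term of $g'(\frac{x}{4})$ in the main part, so that the remaining tail starts at $m=2$ and is genuinely small, or restrict to the relevant range $\kappa\le\kappa_0$ and perform the sharp numerical comparison of the upper and lower approximations, exactly as in Lemma \ref{l:sign_gprime} and as illustrated there in Figure \ref{Plotl1234}. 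Either way, the positivity of the leading product transfers to $l_4$ itself on all of $[0,1]$.
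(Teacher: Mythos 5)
Your proposal follows essentially the same route as the paper: the paper likewise keeps the first two terms of $g'\big(\frac{x+1}{2}\big)$ and the first term of $g'\big(\frac{x}{4}\big)$, arrives at the identical leading expression $4\pi^2\big[\sin\big(\frac{\pi x}{2}\big)+\frac{\kappa}{4}(2-\sqrt{2})\cos\big(\frac{\pi x}{4}\big)\big]+O\big((\kappa/2)^2\big)$ (your factored form is the same quantity), and then disposes of the remainder by asserting it is small enough for the admissible range of $\kappa$, with the upper/lower numerical approximations of Figure \ref{Plotl5} as justification --- exactly the second of your two proposed closing steps. Your explicit lower bound $\pi^2\kappa(\sqrt{2}-1)$ and your candid remark that the $(\kappa/2)^m$ tail is not comfortably dominated by a crude geometric bound are, if anything, more careful than the paper's one-line dismissal of the error term.
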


\pf
We estimate by taking the first two terms in the series expansion of $g'(\frac{x+1}{2})$ and the first term of $g'(\frac{\kappa}{4})$. We obtain
\begin{align*}
l_4(x)
=&-4\pi^2\Big[\sin \Big(\frac{\pi}{2}(2+x)\Big) + \frac{\kappa}{4}(2-\sqrt{2}) \sin \Big(\frac{\pi}{2}(1+\frac{x}{2})\Big)\Big]
+O\Big(\big(\frac{\kappa}{2}\big)^2\Big)
\\
=&4\pi^2\Big[\sin \Big(\frac{\pi x}{2} \Big) + \frac{\kappa}{4}(2-\sqrt{2}) \cos \Big( \frac{\pi x}{4}\Big)\Big]
+O\Big(\big(\frac{\kappa}{2}\big)^2\Big)
\end{align*}
This function is clearly positive on $[0,1]$ (see Figure \ref{Plotl5}), since for the given range of $\kappa$ the error term is small enough. \epf

\begin{figure}[h!tbp]
\centering
\includegraphics[width=\textwidth]{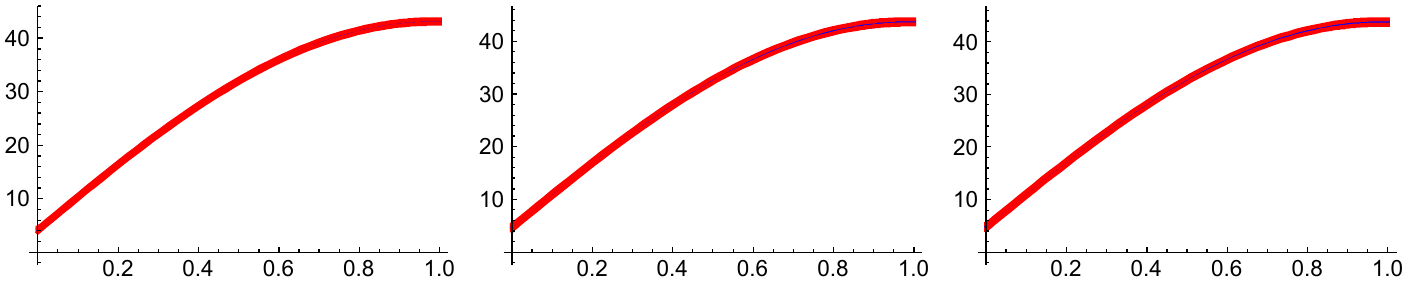}
\caption{Upper and lower approx.~of graph of $l_4$ for $\kappa=0.5, 0.55, 0.56$ (left to right). The function is strictly positive as per Lemma \ref{l:sign_gprime2}.}
\label{Plotl5}
\end{figure}

Next we state the result on the sign of $S'(\xi,\cdot)-S'(\eta,\cdot)$ on $[0,.1]$ and $[.9,1]$.

\begin{lem} \label{lemposSdpri2}
For $\kappa \in [\eh,\kappa_0]$, we have
$$
S'(\xi,\cdot)-S'(\eta,\cdot)<0\quad\mbox{on}\quad[0,.1],\quad\quad S'(\xi,\cdot)-S'(\eta,\cdot)>0\quad\mbox{on}\quad[.9,1].
$$	
\end{lem}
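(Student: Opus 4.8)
The plan is to mirror the structure of the proof of Lemma \ref{lemposSdpri}, working from the representation \eqref{e:eqrepSprime} and treating the ten cases of Table \ref{table:Cases}. The two building blocks are the sign of $g'$ (Lemma \ref{l:geometry_gprime}: since the unique root of $g'$ lies in $[y_0,y_1]\subset[.55,.6]$, $g'$ is strictly negative on $[0,.1]$ and strictly positive on $[.9,1]$, and in fact bounded away from zero on both intervals), together with the positivity of the increments $l_1,l_2,l_3$ (Lemma \ref{l:sign_gprime}) and $l_4$ (Lemma \ref{l:sign_gprime2}). In each case I would group the correction terms in \eqref{e:eqrepSprime} into these increments exactly as the $k_i$ were grouped in Lemma \ref{lemposSdpri}: e.g. in Cases 8--10 the two first-order terms combine to $\frac{\kappa}{2}l_1(x)$, in Case 5 to $\frac{\kappa}{2}l_4(x)$, while the second-order pairs yield $(\frac{\kappa}{2})^2 l_2(x)$ or $(\frac{\kappa}{2})^2 l_3(x)$; the remaining tail is $O((\kappa/2)^3)$, controlled by the geometric bound $\sum_{j\ge 3}2(\kappa/2)^j\|g'\|_\infty$.

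On $[.9,1]$ this is the easy side. The leading term $\kappa g'(x)$ is strongly positive, every first-order correction is positive (each grouped increment $l_i$ is positive by Lemmas \ref{l:sign_gprime}--\ref{l:sign_gprime2}, and the only surviving single $\xi$-evaluation $\frac{\kappa}{2}g'(\frac{x+1}{2})$ has argument $\frac{x+1}{2}\in[.95,1]$ above the root, hence is positive), and the only possibly negative corrections are second order, e.g. $(\frac{\kappa}{2})^2 g'(\frac{x+1}{4})$ in Case 2. Thus the expression is a sum of a large positive leading term, positive first-order corrections, and small higher-order terms, so positivity follows at once in all ten cases.

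On $[0,.1]$ the leading term $\kappa g'(x)$ is strongly negative (of size $\approx-4\pi^2$ near $0$), while the grouped corrections are positive and act against us; the point is that they are dominated. In Cases 1--4, 6, 7 the surviving first-order $\xi$-term $\frac{\kappa}{2}g'(\frac{x+1}{2})$ has argument in $[.5,.55]$, where $g'<0$, so it is itself negative and only second-order positive corrections remain, which are harmless. The genuinely adverse contributions are the first-order positive ones, namely $\frac{\kappa}{2}l_1(x)$ in Cases 8--10 and $\frac{\kappa}{2}l_4(x)$ in Case 5. Since $\kappa/2\le\kappa_0/2<.28$ and $|g'|$ is large on $[0,.1]$, I would bound these by approximating $g'$ and the $l_i$ by the first one or two terms of their series, with the same remainder estimates already used for $g'$, reducing to an elementary trigonometric inequality, and verify that $g'(x)+\frac{\kappa}{2}l_1(x)+(\frac{\kappa}{2})^2 l_2(x)<0$ (the Case 10 bound, which dominates the others) on all of $[0,.1]$ and uniformly in $\kappa\in[\frac12,\kappa_0]$, in the numerical manner used throughout the paper.

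The main obstacle is precisely this $[0,.1]$ estimate: unlike on $[.9,1]$, the signs of the leading term and of the corrections disagree, so one cannot merely invoke positivity of the $l_i$. One must quantify that $|g'|$ on $[0,.1]$ genuinely outweighs the largest positive correction $\frac{\kappa}{2}l_1$, uniformly in $x$ over the whole subinterval (not just at an endpoint) and uniformly in $\kappa$, while keeping the infinite tail of higher-order jump contributions under control. This requires careful remainder bookkeeping rather than the qualitative sign arguments that suffice on $[.9,1]$.
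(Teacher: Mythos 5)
Your proposal is correct and follows essentially the same route as the paper: the paper's own proof is a two-sentence sketch stating that the ten-case inspection is analogous to Lemma \ref{lemposSdpri} but simpler, with the essential point being that the negative leading term $g'$ on $[0,.1]$ dominates the positive corrections from Lemmas \ref{l:sign_gprime} and \ref{l:sign_gprime2}, and that positivity on $[.9,1]$ is even easier. Your case-by-case grouping into the $l_i$, your identification of Cases 5 and 8--10 as the only genuinely adverse ones with Case 10 dominant, and your insistence on a quantitative bound of $\frac{\kappa}{2}l_1+(\frac{\kappa}{2})^2 l_2$ against $|g'|$ uniformly on $[0,.1]$ supply precisely the details the paper omits.
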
	

\pf The inspection of the ten cases of Table \ref{table:Cases} is analogous to the one in the proof of Lemma \ref{lemposSdpri}, but simpler. The essential observation is that $g'|_{[0,.1]}$ is negative and very small compared to the positive contributions coming from the additional terms discussed in Lemmas \ref{l:sign_gprime} and \ref{l:sign_gprime2}. Hence negativity is preserved on $[0,.1]$ in all cases. The argument for positivity on $[.9,1]$ is even simpler. We omit further details.\epf

\begin{figure}[h!tbp]
\centering
\includegraphics[width=\textwidth]{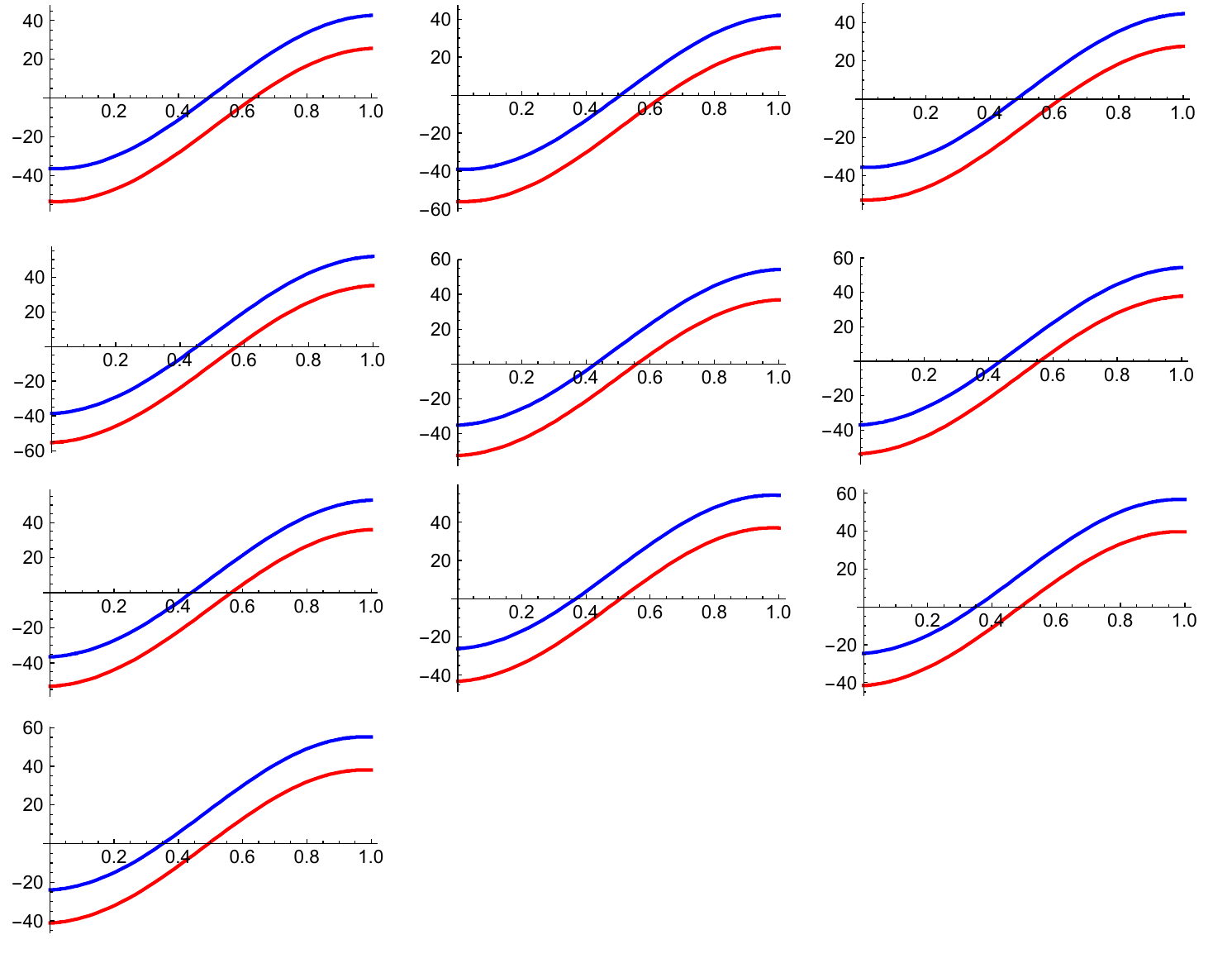}
\caption{Upper and lower approx.~of $S'(\xi,\cdot)-S'(\eta,\cdot)$ for $x\in[0,1]$ for $\kappa=0.55$ in ten cases of Table \ref{table:Cases} (Case 1 on top left, continuing in the usual fashion left to right, top to bottom). The maps are negative for $x\in[0,.1]$ and positive for $x\in[.9,1]$ in accordance with Lemma \ref{lemposSdpri2}.}
\label{PlotS'}
\end{figure}

				\section{The relationship between $\rho, \hat{\rho}$ and Lebesgue measure}\label{s:rho_rhohat}
				
				In this section we shall exploit the scaling properties of $S$, more precisely its self affinity in order to express $\rho$ in terms of $\hat{\rho}$. Recall that $\rho$ is the image measure of the three-dimensional Lebesgue measure by the map
				$$[0,1]^3\ni (x,\xi,\eta)\mapsto S(\xi,x)-S(\eta,x),$$
namely, for any Borel set $A\subset \R$, we define
				\[
				\rho(A)=\lambda^3\Big(\big\{ (x,\xi,\eta)\in [0,1]^3: S(\xi,x)-S(\eta,x) \in A\big\}\Big).
				\]
				and $\hat{\rho}(\cdot) = \rho(\cdot\cap\{\eh<|\xi-\eta|\}).$
				By its very definition, $\hat{\rho}$ lives on the set of pairs $(\xi, \eta)\in[0,1]^2$ for which $\eh<|\xi-\eta|$. On this set, transversality of $S$ will allow a comparison of $\hat{\rho}$ with the Lebesgue measure. This will finally lead to conclusions about the regularity of the SBR measure. In the following formula, $\rho$ is shown to be a weighted average of expansions measured by $\hat{\rho}.$
				
				\begin{pr}\label{p:rho-rhohat}
					For Borel sets $A$ on the real line, we have
					$$\rho(A) = \sum_{n=0}^\infty 2^{-n} \hat{\rho}(\kappa^{-n} A).$$
				\end{pr}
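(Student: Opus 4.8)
The plan is to decompose the unit square in the $(\xi,\eta)$ variables according to the length of the common leading block in the dyadic expansions of $\xi$ and $\eta$, and then use the scaling law of $S$ inherited from Lemma \ref{l:scaling_G} to recognize each piece as a rescaled copy of $\hat\rho$. Concretely, for $n\ge 0$ let $A_n$ be the event that $\bxi_{-j}=\beeta_{-j}$ for $0\le j\le n-1$ but $\bxi_{-n}\ne\beeta_{-n}$; equivalently, $A_n=\{2^{-(n+1)}\le|\xi-\eta|\ \text{with the first }n\text{ dyadic digits agreeing}\}$, and these events partition (up to the Lebesgue-null dyadic set) the full square $[0,1]^2$. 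On $A_0$ we have $\bxi_0\ne\beeta_0$, which is exactly where $\hat\rho$ is supported after we note $\{\eh<|\xi-\eta|\}\subset\{\bxi_0\ne\beeta_0\}$; on $A_n$ for $n\ge 1$ the first $n$ digits coincide, so applying the inverse baker transform $n$ times peels off these common digits.

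The key analytic input is the identity $G(B^{-1}(\xi,x))=\kappa\,G(\xi,x)$ of Lemma \ref{l:scaling_G}, together with Lemma \ref{l:doubly_infinite_series_G} which gives $S(\xi,x)-S(\eta,x)=G(\xi,x)-G(\eta,x)$. Iterating yields
\[
S(\xi_{-n},x_{-n})-S(\eta_{-n},x_{-n})
=G\big(B^{-n}(\xi,x)\big)-G\big(B^{-n}(\eta,x)\big)
=\kappa^n\big(G(\xi,x)-G(\eta,x)\big)
=\kappa^n\big(S(\xi,x)-S(\eta,x)\big),
\]
valid because the two points $(\xi,x)$ and $(\eta,x)$ share the same $x$ and the same first $n$ digits of the first coordinate on $A_n$, so the same digits are stripped in each term. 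Thus on $A_n$ the value $S(\xi,x)-S(\eta,x)$ equals $\kappa^{-n}$ times a difference evaluated at the rescaled pair $B^{-n}(\xi,x),B^{-n}(\eta,x)$, whose leading digits now differ, i.e.\ which lives on the macroscopic set. Since $\lambda^2\circ B^{-1}=\lambda^2$ (equation \eqref{eq:InvarianceBakerB}) and the $x$-marginal is also preserved, the pushforward of three-dimensional Lebesgue measure restricted to $A_n$ under $(x,\xi,\eta)\mapsto S(\xi,x)-S(\eta,x)$ equals the pushforward of Lebesgue measure on the macroscopic set under the rescaled map, which is $\hat\rho(\kappa^{-n}\cdot)$, up to the Lebesgue mass $2^{-n}$ of the event $A_n$.

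Assembling the pieces, $\rho(A)=\sum_{n\ge 0}\lambda^3(\{(x,\xi,\eta):S(\xi,x)-S(\eta,x)\in A\}\cap A_n)$, and each summand is shown to equal $2^{-n}\hat\rho(\kappa^{-n}A)$, giving the claimed telescoping identity. The main obstacle I anticipate is the careful bookkeeping of the change of variables: one must verify that the map $B^{-n}$ sends the slab $A_n$ bijectively and measure-preservingly onto the macroscopic region $\{\bxi_0\ne\beeta_0\}$ while correctly tracking the factor $2^{-n}$ coming from the conditional law of the first $n$ coincident digits, and simultaneously confirm that the $x$-variable is transported consistently so that the scalar identity $S(\xi,x)-S(\eta,x)=\kappa^n(S(B^{-n}_1(\xi,x),x_{-n})-S(B^{-n}_1(\eta,x),x_{-n}))$ holds \emph{pointwise} on $A_n$ before integrating. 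Once the measure-preserving structure of the baker transform is used to absorb the $x$-transport, the remaining computation is the geometric-series rearrangement that produces $\sum_{n\ge 0}2^{-n}\hat\rho(\kappa^{-n}A)$.
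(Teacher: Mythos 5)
Your overall strategy---decompose the square in $(\xi,\eta)$ by scales, use the scaling of $G$ from Lemma \ref{l:scaling_G} together with Lemma \ref{l:doubly_infinite_series_G}, and invoke the measure preservation of the baker map---is the same family of argument as the paper's, which decomposes instead by $2^{-(n+1)}<|\xi-\eta|\le 2^{-n}$ and transports each slab after an $\epsilon$-thickening in an auxiliary $x$-variable. However, there is a concrete error in the direction of the baker iterate. By Lemma \ref{l:action_B}, $B^{-n}(\xi,x)=\bigl(\tfrac{\xi}{2^n}+\tfrac{\bx_1}{2^n}+\cdots+\tfrac{\bx_n}{2},\,2^n x\ (\mathrm{mod}\ 1)\bigr)$: the inverse iterate \emph{prepends} the first $n$ digits of $x$ to both $\xi$ and $\eta$, so $B^{-n}_1(\xi,x)$ and $B^{-n}_1(\eta,x)$ have \emph{identical} leading digits $\bx_n,\ldots,\bx_1$ and distance $2^{-n}|\xi-\eta|$; they land deeper in the microscopic region, not on the macroscopic set as you claim. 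Peeling off the common leading digits of $\xi$ and $\eta$ on your set $A_n$ requires the \emph{forward} iterate $B^{n}$, for which the relevant scaling is $G(B^{n}(\cdot))=\kappa^{-n}G(\cdot)$, i.e.\ $S(\xi,x)-S(\eta,x)=\kappa^{n}\bigl(S(B^n(\xi,x))-S(B^n(\eta,x))\bigr)$ with $B^n(\xi,x)$ and $B^n(\eta,x)$ sharing the same second coordinate precisely because the first $n$ digits of $\xi$ and $\eta$ agree; this is what converts the condition $S(\xi,x)-S(\eta,x)\in A$ into membership of $\kappa^{-n}A$ on the image. Followed literally, your identity would instead produce $\hat\rho(\kappa^{+n}A)$ in term $n$.

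Two further gaps. First, your $n=0$ term is not $\hat\rho(A)$: the set $A_0=\{\bxi_0\ne\beeta_0\}$ strictly contains $\{\eh<|\xi-\eta|\}$ (their $\lambda^2$-measures are $\eh$ and $\tfrac14$ respectively), and $\hat\rho$ is by definition the restriction of $\rho$ to the smaller set; the fact that $\hat\rho$ is supported inside $A_0$ does not make the restriction of $\rho$ to $A_0$ equal to $\hat\rho$. Your partition by the first disagreeing digit therefore produces a telescoping identity for $\rho(\cdot\cap\{\bxi_0\ne\beeta_0\})$ rather than for the $\hat\rho$ of the statement, and this substitution is not reconciled with the definition used in the rest of the paper. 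Second, the step you explicitly defer---extracting the factor $2^{-n}$---is where the paper's proof does its real work (the auxiliary variable $y$, the sets $D_\epsilon$, and the limit $\lambda^4(D_{2^{-n}\epsilon})/\lambda^4(D_\epsilon)\to 2^{-n}$), because the map $(\xi,\eta,x)\mapsto\bigl(B^n_1(\xi,x),B^n_1(\eta,x),B^n_2(\xi,x)\bigr)$ scales $\lambda^3$ by $2^{n}$ and is not itself measure preserving; note also that $\lambda^2(A_n)=2^{-(n+1)}$, not $2^{-n}$, so the factor cannot be read off as the mass of $A_n$ but only as the ratio of that mass to the mass of the macroscopic set.
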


				\pf We have using Lemma \ref{l:action_B}
				\bea
				\rho(A) &=& \sum_{n=0}^\infty \lambda^3\Big(\Big\{(\xi,\eta,x): S(\xi,x)-S(\eta,x)\in A, 2^{-(n+1)}<|\xi-\eta|\le 2^{-n}\Big\}\Big)\\
				&=& \sum_{n=0}^\infty \lambda^3\Big(\Big\{(\xi,\eta,x): S(\xi,x)-S(\eta,x)\in A, \eh<|B_1^{n}(\xi,x)-B_1^{n}(\eta,x)|\le 1\Big\}\Big)\\
				&=& \sum_{n=0}^\infty \lim_{\epsilon\to 0} \frac{1}{\lambda^4(D_\epsilon)}
				                     \lambda^4\Big(\Big\{(\xi,\eta,x,y): S(\xi,x)-S(\eta,y)\in A,\\
&&\hspace{5cm} \eh<|B_1^{n}(\xi,x)-B_1^{n}(\eta,y)|\le 1, |x-y|\le \epsilon\Big\}\Big),
				\eea
where for $\epsilon>0$ we let $D_\epsilon = \{(\xi,\eta,x,y):|x-y|\le \epsilon\}.$
				We next use the invariance of $B$ (see \eqref{eq:InvarianceBakerB}) to estimate term $n$ of the preceding series.
				Then for $n\ge 0$
				\bea
				&&
				\lim_{\epsilon\to 0} \frac{1}{\lambda^4(D_\epsilon)}
				                     \lambda^4\Big(\Big\{(\xi,\eta,x,y): S(\xi,x)-S(\eta,y)\in A,\\
&&\hspace{5cm} \eh<|B_1^{n}(\xi,x)-B_1^{n}(\eta,y)|\le 1, |x-y|\le \epsilon\Big\}\Big),
\\
				&&\hspace{.5cm} = \lim_{\epsilon\to 0} \frac{1}{\lambda^4(D_{\epsilon})} \lambda^4\Big(\Big\{(\xi,\eta,x,y): S(B^{-n}(\xi,x))-S(B^{-n}(\eta,y))\in A,
				\\
				&&\hspace{5cm}\eh<|\xi-\eta|\le 1, |B^{-n}_2(\xi,x)-B^{-n}_2(\eta,y)|\le \epsilon\Big\}\Big)
				\\
				&&\hspace{.5cm}= \lim_{\epsilon\to 0} \frac{\lambda^4(D_{2^{-n}\epsilon})}{\lambda^4(D_\epsilon)} \frac{1}{\lambda^4(D_{2^{-n}\epsilon})}
				\lambda^4\Big(\Big\{ (\xi,\eta,x,y): S(B^{-n}(\xi,x))-S(B^{-n}(\eta,y))\in A,
				\\
        &&\hspace{5cm} \eh<|\xi-\eta|\le 1, |x-y|\le 2^{-n}\epsilon\Big\}\Big)
				\\
				&&\hspace{.5cm}= 2^{-n} \lambda^3\Big(\Big\{ (\xi,\eta,x): S(B^{-n}(\xi,x))-S(B^{-n}(\eta,x))\in A, \eh<|\xi-\eta|\le 1\Big\}\Big).
				\eea

				Now we apply Lemma \ref{l:scaling_G} to transform term $n$ in the preceding chain of equations into
$$
\lambda^3\Big(\Big\{ (\xi,\eta,x): \kappa^{n}(S(\xi,x)-S(\eta,x))\in A, \eh<|\xi-\eta|\le 1\Big\}\Big) = \hat{\rho}\big(\kappa^{-n} A\big).
$$
This implies the claimed equation. \epf


To abbreviate, define
				$$f_{\xi,\eta} (\cdot) = S(\xi,\cdot)-S(\eta,\cdot).$$
				We know from section \ref{s:transversality} that for $\kappa\le\kappa_0$, with a $\kappa_0 \in [0.55, 0.56]$, the map $f_{\xi,\eta}$, restricted to $[0,1]$, possesses a unique negative (global) minimum. For $y\in\R$, denote by $f^{-1}_{\xi,\eta}(y)$ the set of at most two points $x\in[0,1]$ satisfying $f_{\xi,\eta}(x)=y.$ We can state the following proposition.
				
				\begin{pr}\label{p:density_rhohat}
					Let $\kappa\le \kappa_0.$ Then $\hat{\rho}$ is absolutely continuous with respect to the Lebesgue measure with density
					$$\phi(y)= \int_{\{\eh<|\xi-\eta|\}} \sum_{x\in f^{-1}_{\xi,\eta}(y)}\frac{1}{|f'_{\xi,\eta}(x)|} d\xi d\eta, \quad y\in\R.$$
				\end{pr}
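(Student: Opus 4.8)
The plan is to reduce the statement to a single one-dimensional change of variables performed for each fixed pair $(\xi,\eta)$ in the macroscopic set $\{\eh<|\xi-\eta|\}$, and then to integrate out $(\xi,\eta)$ by Tonelli's theorem. First I would fix a non-dyadic $(\xi,\eta)$ with $\eh<|\xi-\eta|$ and record the shape of $f_{\xi,\eta}=S(\xi,\cdot)-S(\eta,\cdot)$ established in Section \ref{s:transversality}. By Lemmas \ref{lemposSdpri} and \ref{lemposSdpri2}, for $\kappa\le\kappa_0$ the function $f_{\xi,\eta}$ is strictly decreasing on $[0,.1]$, strictly convex on $[.1,.9]$, and strictly increasing on $[.9,1]$; moreover it is $C^1$ (indeed $C^\infty$) since the series for $S'(\xi,\cdot)$ converges uniformly, the derivative in $x$ producing the summable factor $(\kappa/2)^n$. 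Consequently $f'_{\xi,\eta}$ is negative at $.1$, positive at $.9$, and strictly increasing in between, so it has a unique zero $x^*=x^*(\xi,\eta)\in(.1,.9)$. Hence $f_{\xi,\eta}$ is strictly decreasing on $[0,x^*]$ and strictly increasing on $[x^*,1]$: it is unimodal with a unique global minimum at $x^*$, its derivative vanishes only at $x^*$, and by Proposition \ref{p:transversality} the minimal value $m(\xi,\eta)=f_{\xi,\eta}(x^*)$ is strictly negative. In particular every fibre $f^{-1}_{\xi,\eta}(y)$ has at most two points, one on each monotone branch, and at every fibre point other than $x^*$ one has $f'_{\xi,\eta}\ne 0$.

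Second, I would apply the elementary change of variables on each monotone branch. Splitting $[0,1]=[0,x^*]\cup[x^*,1]$ and substituting $y=f_{\xi,\eta}(x)$ on each piece (with $dx=dy/|f'_{\xi,\eta}(x)|$), for every bounded Borel function $h:\R\to\R$ one gets
\begin{equation*}
\int_0^1 h\big(f_{\xi,\eta}(x)\big)\,dx=\int_\R h(y)\,\phi_{\xi,\eta}(y)\,dy,\qquad\text{where}\qquad \phi_{\xi,\eta}(y):=\sum_{x\in f^{-1}_{\xi,\eta}(y)}\frac{1}{|f'_{\xi,\eta}(x)|}.
\end{equation*}
The single critical value $y=m(\xi,\eta)$ contributes only a $\lambda$-null set of $y$ and can be ignored; on the open branches $f'_{\xi,\eta}\ne 0$, so every summand is finite. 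This identity says precisely that the pushforward of $\lambda|_{[0,1]}$ under $f_{\xi,\eta}$ is absolutely continuous with Lebesgue density $\phi_{\xi,\eta}$.

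Third, I would integrate over the macroscopic set. By the very definition of $\hat{\rho}$ and the previous step applied with $h=\1_A$, for any Borel set $A\subset\R$,
\begin{equation*}
\hat{\rho}(A)=\int_{\{\eh<|\xi-\eta|\}}\int_0^1 \1_A\big(f_{\xi,\eta}(x)\big)\,dx\,d\xi\,d\eta=\int_{\{\eh<|\xi-\eta|\}}\int_A \phi_{\xi,\eta}(y)\,dy\,d\xi\,d\eta.
\end{equation*}
Since the integrand is non-negative, Tonelli's theorem permits exchanging the order of integration, yielding $\hat{\rho}(A)=\int_A \phi(y)\,dy$ with $\phi$ exactly as in the statement. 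This is the asserted absolute continuity and identifies the density.

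The hard part will be the joint measurability of $(\xi,\eta,y)\mapsto\phi_{\xi,\eta}(y)$ needed to justify Tonelli. I would obtain it from the implicit function theorem: off the critical point $f'_{\xi,\eta}\ne 0$, so on each branch the inverse $y\mapsto x(\xi,\eta,y)$ is $C^1$, and it depends continuously on $(\xi,\eta,y)$ jointly because $S$ and $S'$ are continuous in $(\xi,x)$; the branches are delimited by the continuously varying thresholds $m(\xi,\eta)$, $f_{\xi,\eta}(0)$, $f_{\xi,\eta}(1)$, so $\phi_{\xi,\eta}(y)$ is a piecewise-continuous, hence Borel, function of $(\xi,\eta,y)$. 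A minor secondary point, the integrability of $\phi$, is then automatic: because $\rho$ is the image of a probability measure we have $\hat{\rho}\le\rho$ with $\hat{\rho}(\R)\le 1<\infty$, whence $\int_\R\phi\,dy=\hat{\rho}(\R)<\infty$, so $\phi\in L^1$ and in particular $\phi(y)<\infty$ for $\lambda$-a.e.~$y$, even though $\phi_{\xi,\eta}(m(\xi,\eta))=+\infty$ at the individual critical values.
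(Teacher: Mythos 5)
Your proof is correct and follows essentially the same route as the paper: compute the $(\xi,\eta)$-section of $\hat{\rho}$ as the one-dimensional pushforward of Lebesgue measure under the unimodal map $f_{\xi,\eta}$ (whose shape comes from Lemmas \ref{lemposSdpri} and \ref{lemposSdpri2}), identify its density as $\sum_{x\in f^{-1}_{\xi,\eta}(y)}1/|f'_{\xi,\eta}(x)|$, and integrate over the macroscopic set via Fubini--Tonelli. The only difference is presentational — the paper obtains the sectional density as $\lim_{\epsilon\to0}\frac{1}{2\epsilon}(\cdot)$ while you use an explicit change of variables on the two monotone branches — and you supply the measurability and integrability details that the paper leaves implicit.
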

				
				\pf Let $y\in\R, \xi,\eta\in[0,1].$ Then the $(\xi,\eta)$-section of $\hat{\rho}([y-\epsilon, y+\epsilon])$ is given by
				$\int_0^1 1_{[y-\epsilon, y+\epsilon]}(S(\xi,x)-S(\eta,x)) dx,$ and therefore by the regularity of $f_{\xi,\eta}$
				$$
				\lim_{\epsilon\to 0} \frac{1}{2\epsilon} \hat{\rho}\big([y-\epsilon, y+\epsilon]\big) = \sum_{x\in f^{-1}_{\xi,\eta}(y)} \frac{1}{|f'_{\xi,\eta}(x)|}.
				$$
				Therefore the desired formula for the density of $\hat{\rho}$ at $y$ follows from integrating the expression obtained in $(\xi,\eta)\in[0,1]^2$ and Fubini's theorem.
\epf
				
				Combining the preceding two propositions we obtain a similar absolute continuity statement for $\rho.$
				
				\begin{co}\label{c:density_rho}
					Let $\kappa\le \kappa_0.$ Then $\rho$ is absolutely continuous with respect to the Lebesgue measure with density
					$$\R\ni y\mapsto \sum_{n=0}^\infty \gamma^n \sum_{x\in f_{\xi,\eta}^{-1}(\kappa^{-n}y)}\frac{1}{|f'_{\xi,\eta}(x)|} d\xi d\eta.$$
				\end{co}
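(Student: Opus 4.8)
The plan is to read the corollary off as a direct bookkeeping consequence of the two preceding propositions, transported through a single scaling change of variables, the point being that every quantity involved is nonnegative so that no delicate convergence estimate is needed. First I would invoke the telescoping identity of Proposition \ref{p:rho-rhohat}, namely $\rho(A)=\sum_{n=0}^\infty 2^{-n}\hat{\rho}(\kappa^{-n}A)$ for every Borel set $A\subset\R$, and combine it with the absolute continuity of $\hat{\rho}$ from Proposition \ref{p:density_rhohat}, which for $\kappa\le\kappa_0$ yields $\hat{\rho}(\kappa^{-n}A)=\int_{\kappa^{-n}A}\phi(z)\,dz$ with $\phi$ as stated there.

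Next I would perform in each term the substitution $z=\kappa^{-n}y$, whose Jacobian is $\kappa^{-n}$, to pull the integral back onto $A$:
$$2^{-n}\hat{\rho}(\kappa^{-n}A)=2^{-n}\int_A\phi(\kappa^{-n}y)\,\kappa^{-n}\,dy=\int_A(2\kappa)^{-n}\,\phi(\kappa^{-n}y)\,dy.$$
Since $\kappa=\frac{1}{2\gamma}$, we have $(2\kappa)^{-n}=\gamma^n$, so the $n$-th summand equals $\int_A\gamma^n\phi(\kappa^{-n}y)\,dy$. Because the integrands $\gamma^n\phi(\kappa^{-n}y)$ are nonnegative, Tonelli's theorem lets me interchange summation and integration with no further justification, giving $\rho(A)=\int_A\sum_{n=0}^\infty\gamma^n\phi(\kappa^{-n}y)\,dy$. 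This identifies $\rho$ as absolutely continuous with density $y\mapsto\sum_{n=0}^\infty\gamma^n\phi(\kappa^{-n}y)$, and substituting the explicit expression for $\phi$ from Proposition \ref{p:density_rhohat} produces precisely the claimed formula.

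I do not expect a genuine obstacle here, since the real analytic work has already been done in establishing transversality and the density of $\hat{\rho}$; what remains is only to confirm that the resulting series is a bona fide density. The cleanest way to see this is to note that $S$, being a uniformly convergent series of bounded terms, is bounded, so $f_{\xi,\eta}=S(\xi,\cdot)-S(\eta,\cdot)$ takes values in a fixed bounded interval $[-M,M]$ and hence $\phi$ is supported in $[-M,M]$. Consequently, for any fixed $y\ne 0$ one has $\phi(\kappa^{-n}y)=0$ as soon as $\kappa^{-n}|y|>M$, so the defining series is in fact a finite sum for each such $y$; near the origin the geometric weights $\gamma^n$ with $\gamma\in]\eh,1[$ control the remaining contributions. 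Finiteness almost everywhere and integrability are in any case automatic, because $\rho$ is the pushforward of the finite measure $\lambda^3$ on $[0,1]^3$ and is therefore a finite measure whose density necessarily lies in $L^1(\R)$.
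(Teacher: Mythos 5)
Your argument is correct and follows essentially the same route as the paper: both proofs combine Proposition \ref{p:rho-rhohat} with the density of $\hat{\rho}$ from Proposition \ref{p:density_rhohat} and the identity $(2\kappa)^{-1}=\gamma$. The only difference is cosmetic --- the paper identifies the density via the limit $\lim_{\epsilon\to 0}\frac{1}{2\epsilon}\rho([y-\epsilon,y+\epsilon])$, whereas you verify $\rho(A)=\int_A\sum_n\gamma^n\phi(\kappa^{-n}y)\,dy$ directly by a change of variables and Tonelli, which is if anything the cleaner justification of the term-by-term interchange.
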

				
				\pf Let $y\in\R.$ Then the claimed formula follows by applying Proposition \ref{p:rho-rhohat}, calculating
				$$\lim_{\epsilon\to 0} \sum_{n=0}^\infty 2^{-n} \hat{\rho}\big([\kappa^{-n}(y-\epsilon), \kappa^{-n}(y+\epsilon)]\big),$$
				in combination with Proposition \ref{p:density_rhohat} and remarking that $\frac{1}{2 \kappa} = \gamma.$ \epf
				
				\section{The absolute continuity of the SBR measure}\label{s:abs_continuity}
				
In this section we will finally draw our conclusions from the preceding two sections. In fact, we will derive a sufficient criterion for the absolute continuity of the SBR measure from Corollary \ref{c:density_rho}. For this purpose we consider the Fourier transforms of the marginals $\mu_x$, $x\in[0,1]$,  of the SBR measure $\mu$ defined in \eqref{eq:SBRforGamma}. Let
$$\phi_x(u) = \int_{\R} \exp(i u y) \mu_x(\dd y),\quad u\in\R.$$
By definition of $\mu$ and the integral transform theorem we have
$$\phi_x(u) = \int_0^1 \exp\big(i u S(\xi,x)\big) \dd\xi,\quad u\in\R, x\in[0,1].$$
To prove the absolute continuity of $\mu_x$ we have to prove that $\phi_x$ is square integrable on $\R$. Therefore, to prove that $\mu$ is absolutely continuous, it will be sufficient to prove
\bea
\int_0^1 \int_\R |\phi_x(u)|^2 \dd u \dx
&=& \int_\R \int_{[0,1]^3} \exp\Big(i u \big(S(\xi, x) - S(\eta,x)\big)\Big) \dx \dd\xi \dd\eta  \dd u\\
&=& \int_\R \int_\R \exp(i u x) \rho(dx)du < \infty.
\eea

\begin{thm}\label{t:SBR_ac}
Let $\kappa\le\kappa_0.$ Assume that
$$(\xi,\eta,y)\mapsto \sum_{x\in f_{\xi,\eta}^{-1}(y)} \frac{1}{|f'_{\xi,\eta}(x)|}$$
is bounded and continuous (in $y$) at $0$.
Then for almost every $x\in[0,1]$ the function
$$\xi\mapsto S(\xi,x)$$
has an absolutely continuous law with respect to the Lebesgue measure with a square integrable density. In particular, the SBR measure \eqref{eq:SBRforGamma} is absolutely continuous with respect to the Lebesgue measure and possesses a square integrable density.
\end{thm}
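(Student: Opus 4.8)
The plan is to reduce the assertion to the finiteness of the single quantity $\int_0^1\int_\R |\phi_x(u)|^2\,\du\,\dx$ and to evaluate the latter through the characteristic function of $\rho$. First I would record that, by Plancherel, for each $x$ the marginal $\mu_x$ admits a square integrable density precisely when $\phi_x\in L^2(\R)$, and that in that case $\|p_x\|_{L^2}^2=\tfrac{1}{2\pi}\int_\R|\phi_x(u)|^2\,\du$, writing $p_x$ for the density of $\mu_x$. Hence it suffices to show
\[
\int_0^1\int_\R |\phi_x(u)|^2\,\du\,\dx<\infty ,
\]
since then $\phi_x\in L^2$ for almost every $x$ (finiteness of the double integral of a nonnegative integrand forces finiteness of the inner one a.e.), each such $\mu_x$ is absolutely continuous with $L^2$ density, and integrating the Plancherel identity in $x$ gives a square integrable density for $\mu$ itself. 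Using Tonelli on the nonnegative integrand together with the definition of $\rho$ as the image of $\lambda^3$ under $(x,\xi,\eta)\mapsto S(\xi,x)-S(\eta,x)$, I would rewrite
\[
\int_0^1\int_\R |\phi_x(u)|^2\,\du\,\dx=\int_\R \chi(u)\,\du ,\qquad \chi(u):=\int_\R e^{iuz}\,\rho(\dz),
\]
the crucial structural point being $\chi(u)=\int_0^1|\phi_x(u)|^2\,\dx\ge 0$.

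It then remains to prove $\int_\R\chi(u)\,\du<\infty$, and the nonnegativity of $\chi$ is exactly what makes this tractable, the integral being only conditionally convergent a priori. I would regularise with a Gaussian: by monotone convergence (using $\chi\ge 0$ and $e^{-\epsilon u^2/2}\uparrow 1$),
\[
\int_\R \chi(u)\,\du=\lim_{\epsilon\downarrow 0}\int_\R \chi(u)\,e^{-\epsilon u^2/2}\,\du
=\lim_{\epsilon\downarrow 0} 2\pi\int_\R g_\epsilon(z)\,\rho(\dz),
\]
where $g_\epsilon$ is the centred Gaussian density of variance $\epsilon$; the second equality is Fubini (legitimate as $\rho$ is a probability measure and $e^{-\epsilon u^2/2}\in L^1(\du)$) combined with the Fourier transform of the Gaussian. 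Now $\int_\R g_\epsilon(z)\,\rho(\dz)=(g_\epsilon*\rho)(0)$ is a standard approximate identity, and I would invoke Corollary \ref{c:density_rho} to read off the density of $\rho$ as $p_\rho(y)=\sum_{n\ge 0}\gamma^n\phi(\kappa^{-n}y)$, with $\phi$ the density of $\hat\rho$ from Proposition \ref{p:density_rhohat}. By the boundedness hypothesis the integrand defining $\phi$ is bounded, so $\phi$ is bounded on $\R$ and the series converges uniformly; together with the assumed continuity in $y$ at $0$ (which passes to $\phi$ by dominated convergence over the fixed finite-measure set $\{\eh<|\xi-\eta|\}$) this makes $p_\rho$ continuous at $0$ with
\[
p_\rho(0)=\frac{\phi(0)}{1-\gamma}<\infty .
\]
Continuity of $p_\rho$ at $0$ and finiteness of $\rho$ yield $(g_\epsilon*\rho)(0)\to p_\rho(0)$, whence $\int_\R\chi(u)\,\du=2\pi\,p_\rho(0)<\infty$.

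Chaining these facts back through the reduction of the first paragraph closes the argument: the double integral is finite, so $\phi_x\in L^2$ for a.e.\ $x$, each corresponding $\mu_x$ has a square integrable density, and $\mu$—equivalently the SBR measure of Lemma \ref{l:SBR} defined in \eqref{eq:SBRforGamma}—is absolutely continuous with square integrable density. The main obstacle I expect is the identity $\int_\R\chi=2\pi\,p_\rho(0)$: the Fourier integral of $\chi$ is not absolutely convergent, so naive inversion is unavailable, and the whole step rests on two features acting in tandem, namely the sign property $\chi\ge 0$ (permitting the Gaussian/monotone-convergence regularisation) and the continuity of $p_\rho$ at the single point $0$ (supplied by the hypothesis via the uniformly convergent series of Corollary \ref{c:density_rho}). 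The remaining Fubini/Tonelli interchanges and the Plancherel bookkeeping are routine by comparison.
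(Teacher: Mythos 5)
Your proof is correct, and while it begins from the same reduction as the paper --- namely that it suffices to prove $\int_0^1\int_\R|\phi_x(u)|^2\,\mathrm{d}u\,\mathrm{d}x=\int_\R\int_\R e^{iuy}\,\rho(\mathrm{d}y)\,\mathrm{d}u<\infty$, after which Plancherel gives square integrable densities for almost every marginal $\mu_x$ and hence for $\mu$ --- the way you establish finiteness of that double integral is genuinely different. The paper telescopes back to $\hat{\rho}$ via Proposition \ref{p:rho-rhohat}, inserts the density of $\hat{\rho}$ from Proposition \ref{p:density_rhohat}, performs the $u$-integration first, and arrives at the Dirichlet-kernel expression $2\int_{-L}^{L}g(y)\frac{\sin(Ky)}{y}\,\mathrm{d}y$, whose $\limsup$ in $K$ it asserts to be finite because $g$ is bounded and continuous at $0$. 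You instead observe that $\chi(u)=\int_0^1|\phi_x(u)|^2\,\mathrm{d}x\ge 0$, which licenses the monotone Gaussian regularisation, and you then need only that the density $p_\rho=\sum_{n\ge 0}\gamma^n\phi(\kappa^{-n}\cdot)$ of $\rho$ from Corollary \ref{c:density_rho} is bounded and continuous at $0$ --- immediate from the hypothesis and the Weierstrass $M$-test --- to conclude $\int_\R\chi=2\pi p_\rho(0)<\infty$ via an approximate identity. Your route buys robustness: the paper's closing claim that $\limsup_K\int_{-L}^{L}g(y)\frac{\sin(Ky)}{y}\,\mathrm{d}y<\infty$ for a bounded $g$ merely continuous at $0$ is the delicate localization step for Dirichlet integrals (ordinarily one needs a Dini-type modulus at $0$, or exactly the positivity you exploit, to control the oscillatory tail), whereas the combination of the sign property of $\chi$ with Gaussian summation is airtight and uses nothing beyond the stated hypotheses. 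Both arguments ultimately rest on the same telescoping identity, yours entering through Corollary \ref{c:density_rho} rather than being re-derived inside the proof.
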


\pf
By Proposition \ref{p:density_rhohat}, the integral transformation formula and noting that $\frac{1}{2 \kappa} = \gamma$, we may write
\bea
\int_\R \int_\R \exp(i u x) \rho(dx) du
&=& \sum_{n=0}^\infty 2^{-n} \int_\R \int_\R \exp(i  u y) \hat{\rho}(\kappa^{-n} dy) \du
\\
&=& \int_\R \int_\R \sum_{n=0}^\infty 2^{-n} \exp(i u \kappa^n y) \hat{\rho}(dy) du\\
&=&\int_\R \int_\R \sum_{n=0}^\infty \gamma^n \exp(i u y) \hat{\rho}(dy) du\\
&=& \frac{1}{1-\gamma} \int_\R \int_\R \exp(i u y) \hat{\rho}(dy) du\\
&=& \frac{1}{1-\gamma} \int_\R\int_\R \exp(i u y) \int_{[0,1]^2} \sum_{x\in f_{\xi,\eta}^{-1}(y)} \frac{1}{|f'_{\xi,\eta}(x)|} d\xi d\eta dy du.
\eea
Abbreviate
$$g(y) = \frac{1}{1-\gamma} \sum_{x\in f_{\xi,\eta}^{-1}(y)} \frac{1}{|f'_{\xi,\eta}(x)|} d\xi d\eta.$$
Then by hypothesis and the dominated convergence $y\mapsto g(y)$ is bounded and continuous at $0$.
We have to show that
$$\limsup_{K\to\infty}  \int_{-K}^K \int_\R \exp(i u y)du\,g(y) dy <\infty.$$
Recall that $\hat{\rho}$ is antisymmetric with respect to reflection at the origin and has compact support $[-L,L].$ Hence we have
\bea
\int_{-K}^{K} \int_\R \exp(i u y) \du g(y) dy
&=& \int_{-K}^{K}\int_{-L}^L \exp(i u y) g(y) dy \dd u\\
&=& 2\int_{-L}^L \int_0^{K} \cos(u y) \dd u g(y) dy\\
&=& 2 \int_{-L}^L \frac{\sin(K y)}{y}  g(y) dy.
\eea
Now note that for a bounded function $g$ on $[-L,L]$,
$$\limsup_{K\to\infty} \int_{-L}^L g(y) \frac{\sin(K y)}{y}<\infty,$$
provided $g$ is continuous at $0$. This implies the claimed absolute continuity.
\epf
				
				Transversality guarantees that the sufficient condition of the Theorem is satisfied.
				\begin{co}\label{c:SBR_ac_transversality}
					Let $\kappa\le\kappa_0$. Then the SBR measure is absolutely continuous with respect to Lebesgue measure. Its density is square integrable. 
				\end{co}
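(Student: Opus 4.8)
The plan is to deduce the corollary from Theorem~\ref{t:SBR_ac} by checking that transversality, as quantified in Proposition~\ref{p:transversality}, forces the hypothesis of the theorem. Write $f_{\xi,\eta} = S(\xi,\cdot)-S(\eta,\cdot)$ and $V(x) = (f_{\xi,\eta}(x), f'_{\xi,\eta}(x))$. Proposition~\ref{p:transversality} supplies a constant $c>0$ with $|V(x)|\ge c$ for all $x\in[0,1]$, uniformly over $\{\eh<|\xi-\eta|\}$. The object to control is
$$(\xi,\eta,y)\longmapsto \sum_{x\in f_{\xi,\eta}^{-1}(y)}\frac{1}{|f'_{\xi,\eta}(x)|},$$
which must be shown bounded and, in $y$, continuous at $0$.

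First I would extract the boundedness. If $|y|\le c/2$ and $x\in f_{\xi,\eta}^{-1}(y)$, then $f_{\xi,\eta}(x)=y$ and hence $c^2\le |V(x)|^2 = y^2 + f'_{\xi,\eta}(x)^2$, so that $|f'_{\xi,\eta}(x)|\ge \tfrac{\sqrt3}{2}\,c$. By Lemmas~\ref{lemposSdpri} and~\ref{lemposSdpri2}, the map $f_{\xi,\eta}$ is strictly decreasing on $[0,.1]$, strictly convex on $[.1,.9]$, and strictly increasing on $[.9,1]$; consequently every level set $f_{\xi,\eta}^{-1}(y)$ contains at most two points. The displayed sum therefore has at most two summands, each at most $2/(\sqrt3\,c)$, giving the uniform bound $4/(\sqrt3\,c)$ on $\{|y|\le c/2\}\times\{\eh<|\xi-\eta|\}$.

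Next I would treat continuity in $y$ at $0$. The proof of Proposition~\ref{p:transversality} shows that the unique global minimum of $f_{\xi,\eta}$ lies strictly below the axis, uniformly in $(\xi,\eta)$; thus $0$ strictly exceeds this minimum, and for $y$ near $0$ the equation $f_{\xi,\eta}(x)=y$ has exactly two solutions, one on the decreasing branch and one on the increasing branch, each a transversal (simple) crossing. Since $f'_{\xi,\eta}\neq0$ at each solution, the implicit function theorem produces branches $x_1(y),x_2(y)$ depending continuously on $y$, with $f'_{\xi,\eta}(x_i(y))$ continuous and bounded away from $0$. Hence $y\mapsto \sum_i 1/|f'_{\xi,\eta}(x_i(y))|$ is continuous at $0$ for almost every $(\xi,\eta)$ (the exceptional $(\xi,\eta)$ whose roots sit at an endpoint form a null set), and dominated convergence against the bound of the previous paragraph transfers this continuity to the $(\xi,\eta)$-integrated density entering the proof of Theorem~\ref{t:SBR_ac}.

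With boundedness and continuity verified, Theorem~\ref{t:SBR_ac} applies verbatim and delivers the absolute continuity of the SBR measure together with square integrability of its density. The real work is not in this final deduction but in Proposition~\ref{p:transversality} itself: the translation from the geometric statement $|V|\ge c$ to the analytic regularity of the density hinges on the monotone--convex--monotone shape of $f_{\xi,\eta}$ with a unique minimum strictly below the axis, which simultaneously caps the number of preimages at two and keeps $0$ a regular value. The one point that still demands care at this stage is the \emph{uniformity} of $c$ and of these shape properties over the whole set $\{\eh<|\xi-\eta|\}$, since only a uniform lower bound yields the uniform domination needed for the dominated-convergence step.
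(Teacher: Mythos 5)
Your argument is correct and follows essentially the same route as the paper: transversality yields a uniform lower bound on $|V|$, the monotone--convex--monotone shape of $f_{\xi,\eta}$ caps the number of preimages at two, so the sum in the hypothesis of Theorem~\ref{t:SBR_ac} is bounded near $y=0$ and continuous there, and the theorem then delivers the conclusion. The only step the paper's proof makes explicit that you leave to the theorem's statement is the justification of square integrability via the convolution identity $\int_\R f^2(y)\,dy = f*f(0) = g(0) < \infty$.
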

				
				\pf If $S$ satisfies the transversality property, then we know that there exists $\epsilon>0$ such that $\inf_{(x,\xi,\eta)\in[0,1]^3}|V_{\xi,\eta}(x)|>\epsilon,$ for
				$$V_{\xi,\eta}(x) = \big(\, f_{\xi,\eta}(x), f'_{\xi,\eta}(x)\, \big).$$ In this case
$$y\mapsto \int_{[0,1]^2} \sum_{x\in f_{\xi,\eta}^{-1}(y)} \frac{1}{|f'_{\xi,\eta}(x)|} d\xi d\eta$$
is continuous at $0$ and
				$$\sum_{x\in f_{\xi,\eta}^{-1}(0)} \frac{1}{|f'_{\xi,\eta}(x)|} \le \frac{2}{\epsilon}.$$
				This implies that the conditions of Theorem \ref{t:SBR_ac} are satisfied. To prove the claim on the density, observe that $g$ is the convolution $f*f$ of the density $f$, and hence $\int_\R f^2(y) dy = f*f(0) = g(0),$ which is finite by hypothesis. \epf

{\bf Remark:}\\ One may conjecture that the explicit formula for the density of the SBR measure given in Proposition \ref{p:density_rhohat} should provide the square integrability just proved with Theorem \ref{t:SBR_ac}. In fact, the parabolic structure of the denominators of the terms in the formula for the density gives rise to the statement that singularities of the density function are of the form $y\mapsto {1}/{\sqrt{|y-y_0|}}$ for certain $y_0\in\R.$ This function is $p$-integrable for any $p< 2,$ but just not square integrable. We expect that the subtle dependence of $y_0$ on $\xi$ may contain the key to a direct derivation of square integrability from Proposition \ref{p:density_rhohat}.

%


			\end{document}